\documentclass[sn-nature,Numbered]{sn-jnl}% 
%%%% Standard Packages
\usepackage{graphicx}%
\usepackage{multirow}%
\usepackage{amsmath,amssymb,amsfonts}%
\usepackage{amsthm}%
\usepackage{mathrsfs}%
\usepackage[title]{appendix}%
\usepackage[table,svgnames]{xcolor}
\usepackage{textcomp}%
\usepackage{manyfoot}%
\usepackage{booktabs}%
\usepackage{algorithm}%
\usepackage{algorithmicx}%
\usepackage{algpseudocode}%
\usepackage{listings}%
\usepackage{paralist}%
\usepackage{makecell}%
\usepackage{comment}%
\usepackage{multirow}
\usepackage{todonotes}
\usepackage{soul}
%%%%
 
\makeatletter
\def\blfootnote{\xdef\@thefnmark{}\@footnotetext}
\makeatother

%% as per the requirement new theorem styles can be included as shown below
%\theoremstyle{thmstyleone}%
\newtheorem{theorem}{Theorem}%  meant for continuous numbers
%%\newtheorem{theorem}{Theorem}[section]% meant for section-wise numbers
%% optional argument [theorem] produces theorem numbering sequence instead of independent numbers for Proposition
\newtheorem{proposition}[theorem]{Proposition}% 
\newtheorem{lemma}[theorem]{Lemma}
\newtheorem{corollary}[theorem]{Corollary}

\newtheorem{remark}{Remark}%
\newtheorem{assumption}{Assumption}

\raggedbottom
%%\unnumbered% uncomment this for unnumbered level heads
\title[Convex SIP algorithms with inexact separation oracles]{Convex semi-infinite programming algorithms with inexact separation oracles\blfootnote{{\textit{*Corresponding author}}\\\textit{Email addresses:} \texttt{antoine.oustry@polytechnique.org} (Antoine Oustry), \texttt{mcerulli@unisa.it} (Martina Cerulli)}}

\author[1,2]{Antoine Oustry}
\author[3,*]{Martina Cerulli}

\affil[1]{\orgdiv{LIX}, \orgname{Institut Polytechnique de Paris}, \orgaddress{\city{Palaiseau}, \postcode{91120}, \country{France}}}

\affil[2]{\orgname{Ecole des ponts}, \orgaddress{\city{Marne-la-Vallée}, \country{France}}}

\affil[3]{\orgdiv{Department of Computer Science}, \orgname{University of Salerno}, \orgaddress{\postcode{84084}, \country{Italy}}\vspace*{-1em}}

\begin{document}
\maketitle
\vspace*{-1cm}
\fcolorbox{red}{white}{\parbox{\textwidth}{This paper has been accepted for publication in \textbf{Optimization Letters} (Vol. 19, No. 3, Pages 437--462, 2025). The \textbf{final published version} is available at \url{https://doi.org/10.1007/s11590-024-02148-3}.}}
\vspace*{4mm}

\abstract{Solving convex Semi-Infinite Programming (SIP) problems is challenging when the separation problem, namely, the problem of finding the most violated constraint, is computationally hard.
We propose to tackle this difficulty by solving the separation problem approximately, i.e., by using an inexact oracle. Our focus lies in two algorithms for SIP, namely the Cutting-Planes (CP) and the Inner-Outer Approximation (IOA) algorithms. We prove the CP convergence rate to be in $O(1/k)$, where $k$ is the number of calls to the limited-accuracy oracle, if the objective function is strongly convex. Compared to the CP algorithm, the advantage of the IOA algorithm is the feasibility of its iterates. In the case of a semi-infinite program with a Quadratically Constrained Quadratic Programming separation problem, we prove the convergence of the IOA algorithm toward an optimal solution of the SIP problem despite the oracle's inexactness.}

\keywords{Semi-Infinite Programming, Inexact Oracle, Separation problem\vspace*{-1.8em}}

%%\pacs[JEL Classification]{D8, H51}

%%\pacs[MSC Classification]{35A01, 65L10, 65L12, 65L20, 65L70}

\maketitle

\section{Introduction}

Standard Semi-Infinite Programming (SIP) problems are optimization problems with a finite number of variables and infinitely many constraints. These constraints are indexed by a continuous parameter that takes values from a certain parameter set. Given two integers $m,n \in \mathbb{N}^+$, we consider two continuous functions $F \colon \mathbb{R}^m \to \mathbb{R}$ and $G \colon \mathbb{R}^m \times \mathbb{R}^n \to \mathbb{R}$. Furthermore, we consider two non-empty and compact sets $\mathcal{X} \subset \mathbb{R}^m$, and $\mathcal{Y} \subset \mathbb{R}^n$. The SIP problem on which we focus is formulated as\vspace{-0.8em}
\begin{align}
  \left\{\begin{array}{rl}\vspace*{-1mm}
    \min\limits_{x\in \mathcal{X}} & F(x)  \\
    \text{s.t.} & G(x,y) \leq 0 \quad \forall y \in \mathcal{Y},
    \label{eq:SIP}
    \tag{\mbox{$\mathsf{SIP}$}}
  \end{array}\right.
\end{align}
with the value function $\phi(x)$ defined as $\phi(x) =  \max_{y \in \mathcal{Y}}  G(x,y),$ also referred to as \textit{lower-level problem} or \textit{separation problem} in the rest of the paper.
The infinitely many constraints $ G(x,y) \leq 0, \,\forall y \in \mathcal{Y}$ in \eqref{eq:SIP} can be reformulated as $\phi(x) \leq 0$, where function~$\phi$ is continuous as a direct application of the Maximum Theorem \cite[Th.~2.1.6]{aubin_jean-pierre_viability_1991}. 
This paper restricts to the following convex setting for the formulation \eqref{eq:SIP}.
\begin{assumption}
The non-empty and compact set $\mathcal{X}$ is convex, the function $F(x)$ is convex and the function $G(\cdot,y)$ is linear for every $y \in \mathcal{Y}$.
\label{as:convex}
\end{assumption}
We underline that we make no convexity assumptions regarding the set $\mathcal{Y}$ or the function $G(x, \cdot)$. 
This setting covers the case where the function $G(\cdot,y)$ is affine for every $y \in \mathcal{Y}$. Because of the infinite number of constraints, SIP problems are challenging optimization problems \cite{hettich1983review,stein2012,djelassi}, for the solution of which several methods have been developed in the literature. Whenever the inner problem is convex and regular, it can be replaced by its KKT first-order optimality conditions \cite{stein2003interior,floudas2008adaptive,goberna1998}, obtaining a problem with complementarity constraints.
However, this approach is not applicable in the general case of nonconvex lower-level problems. A valid alternative is the Iterative Discretization methods \cite{reemtsen1991discretization,hettich1986implementation,still2001discretization,Schwientek2021}, consisting in replacing the infinite constraints with several finite constraints, by approximating the infinite parameter set with a finite subset. The original SIP problem can then be transformed into a sequence of finite optimization problems solved using standard optimization techniques. By refining the discretization and solving the finite optimization problems iteratively, the optimal solution to the original SIP problem can be obtained. The convergence rate of the error between the SIP solution and the solution of each discretized program depends on the solution's order and the choice of the gridpoints \cite{still2001discretization}.
The classical Blankenship and Falk's algorithm \cite{blankenship1976infinitely,betro2004accelerated,tichatschke1988cutting} is a discretization method where the considered finite subset of constraints is increased at each iteration by adding the most violated constraint. In the case of convex SIP problems, it corresponds to Kelley's algorithm \cite{kelley1960cutting} also known as Cutting-Planes (CP) algorithm. In the exchange methods \cite{goberna1998,zhang2010new}, at every iteration, some new constraints are added and some old constraints may be deleted.
Recently, in \cite{seidel2022}, a new adaptive discretization method based on Blankenship and Falk CP \cite{blankenship1976infinitely} is proposed, exhibiting a \textit{local} quadratic rate of convergence to a SIP stationary point.  
In \cite{cerulli2022}, a convergent Inner-Outer Approximation (IOA) algorithm is introduced to solve convex SIP problems by combining a CP and a lower-level dualization approach. 
In these methods, in order to address the infinite number of constraints, an optimization algorithm is employed to solve problem~$\phi(x)$ for specific values of $x$. However, this may be computationally difficult, and assuming we solve it exactly is not necessarily realistic. Therefore, we may assume that an \textit{inexact separation oracle} (i.e., a black-box algorithm) is used to compute a feasible point of the problem~$\phi(x)$ with a relative optimality gap of $\delta \in [0,1)$. More precisely, for every $x \in \mathcal{X},$ this ``$\delta$-oracle'' computes $\hat{y}(x) \in \mathcal{Y}$, and an upper bound $\hat{v}(x) \geq \phi(x)$ such that $
 \hat{v}(x)- G(x,\hat{y}(x)) \leq \delta \: \lvert \phi(x) \rvert.$
Consequently, the following inequalities hold:\vspace{-0.5em}
\begin{align}
\phi(x) -\delta \: \lvert \phi(x) \rvert \leq  G(x,\hat{y}(x)) \leq   \phi(x) \leq \hat{v}(x) \leq \phi(x) +\delta \: \lvert \phi(x)\rvert.
 \label{eq:oraclebound}
\end{align}

The literature on convex optimization algorithms using an inexact oracle is extensive. In \cite{daspremont2008}, the Fast Gradient Method proposed in \cite{nesterov2005} is extended to include inexact gradient computation.  In \cite{devolder2014first}, the general concept of inexact oracle for convex problems is introduced and applied to First-Order methods, i.e., primal, dual, and fast gradient methods. The classical primal-dual gradient method can be seen as slow but robust with respect to (w.r.t.) oracle errors. The fast one, instead, is faster but sensitive w.r.t.\ oracle error. In \cite{devolder2013intermediate}, the same authors propose the intermediate gradient method, combining classical and fast gradient methods, providing the flexibility to select a suitable parameter value that balances the convergence rate and the accumulation of oracle errors. Both stochastic and deterministic errors are considered in the oracle information in \cite{dvurechensky2016stochastic}.
In the literature on convex minmax problems (which can be seen as a particular type of SIP problem), the oracle's inexactness is rarely considered. The approximate calculation of the optimal solution of the inner problem has been explored in \cite{gaudioso2006,gaudioso2009,fuduli2015}, in the context of bundle algorithms. When dealing with nonsmooth convex SIP problems, another bundle method with inexact oracle solving the inner problem is proposed in \cite{pang2016constrained}. For general SIP programs, the algorithms proposed in \cite{mitsos2011global,djelassi2017} allow for suboptimal solution of the involved subproblems, i.e., the lower-level, the upper and lower bounding problems. Indeed, in \cite{mitsos2011global}, within the proposed method, solving restrictions of the discretization-based relaxations, the assumption regarding the solutions of the subproblems is only that their accuracy suffices to definitively determine the feasibility of a given iterate.
Similarly, in \cite{djelassi2017}, where the algorithms proposed in \cite{mitsos2011global} and \cite{tsoukalas2011} are combined, subproblems are assumed to be solvable to an arbitrary absolute optimality tolerance, which is adjusted as necessary according to a refinement scheme. Two other adaptive discretization algorithms, still leveraging the same restriction idea of \cite{mitsos2011global}, are proposed in \cite{schmid2022approximate} for convex SIP problems, with finite termination guarantees for any arbitrary precision, despite the possibility, again, of solving the inner problems only approximately.

Compared to these earlier works, the contribution of this paper is to (i) prove a global rate of convergence for the Blankenship and Falk's CP algorithm \cite{blankenship1976infinitely}, despite the inexactness of the separation oracle, in the context of a strongly convex objective function (ii) show that the IOA algorithm is able, once again despite the inexactness of the oracle, to generate a sequence of feasible points converging to an optimum. In other words, this paper extends the convergence results for the CP and IOA algorithms from \cite{cerulli2022} to the case of an inexact separation oracle, and for more general separation problems. As regards the CP algorithm with inexact oracle, we prove, under specific assumptions, a global rate of convergence for the optimality gap, and for the feasibility error. As regards the dualization approach and the IOA algorithm, we trace the steps of \cite{cerulli2022}, in a broader setting. Indeed, we consider a lower level which is not a Quadratic Programming (QP), but a Quadratically Constrained QP (QCQP) problem; we propose a restriction of this SIP problem, which is a reformulation of it when its lower level is convex; we review the sufficient condition proposed in \cite{cerulli2022}, which may be verified a posteriori on a solution of this restriction, to check if it is in fact optimal for the original SIP problem; we present the IOA algorithm with inexact oracle, and we prove that it is convergent. We refer to Table~\ref{tab:notation} in Appendix~\ref{app:notation} for a summary of notations used in the paper.

\section{Convergence rate for the CP algorithm with inexact oracle}

In the setting of convex SIP (Assumption~\ref{as:convex}) with a strongly convex objective (Assumption~\ref{as:strongconvex}), we prove a convergence rate for the Blankenship and Falk's CP algorithm \cite{blankenship1976infinitely} despite the inexactness of the separation oracle. For this purpose, we use Lagrangian duality to determine a dual of \eqref{eq:SIP} in Subsection~\ref{subsec:lagr}, and interpret this algorithm as a variant of the Frank-Wolfe algorithm applied to the dual problem of \eqref{eq:SIP} in Subsection~\ref{subsec:CP}.

\subsection{Lagrangian dual of the convex SIP problem}\label{subsec:lagr}

In line with Assumption~\ref{as:convex}, for every $y \in \mathcal{Y}$, we define $a(y) \in \mathbb{R}^m$ such that $G(x,y)= x^\top a(y)$. As $G$ is assumed to be continuous, we deduce the continuity of $a(\cdot)$. We also define the compact set $\mathcal{M} = \{  a(y)  \colon y \in \mathcal{Y} \}$ and $\mathcal{K} = \mathsf{cone}(\mathcal{M})$ the convex cone generated by $\mathcal{M}$. With this notation, program \eqref{eq:SIP} can be cast as\vspace*{-2mm}
\begin{align}
  \left\{\begin{array}{rl}
    \min\limits_{x\in \mathcal{X}} & F(x)  \\
    \text{s.t.} & x^\top z  \leq 0 \quad \forall z \in \mathcal{M}.\vspace*{-0.5em}
    \label{eq:lsip}
    \tag{\mbox{$\mathsf{SIP'}$}}
  \end{array}\right.
\end{align}
We introduce the Lagrangian function $\mathcal{L}(x,z) = F(x) + x^\top z$, defined over $\mathcal{X} \times \mathcal{K}$, and we have that 
$\mathsf{val} \text{\eqref{eq:SIP}} = \min_{x \in \mathcal{X}} \sup_{z \in \mathcal{K}} \mathcal{L}(x,z)$. 
We notice that the Lagrangian is convex w.r.t.\ $x$, and affine w.r.t.\ $z$. Since the set $\mathcal{X}$ is compact and convex (Assumption~\ref{as:convex}) and the set $\mathcal{K}$ is convex too, Sion’s minimax theorem~\cite{sion1958general} is applicable and the following holds:\vspace*{-1.5em}
\begin{align}
     \min_{x \in \mathcal{X}} \sup_{z \in \mathcal{K}} \mathcal{L}(x,z) =  \sup_{z \in \mathcal{K}} \min_{x \in \mathcal{X}}  \mathcal{L}(x,z). \label{eq:duality}
\end{align}\vspace{-0.6em}
The dual function is $\theta(z) = \min_{x \in \mathcal{X}} \mathcal{L}(x,z)$, and the dual optimization problem is\vspace{-0.2em}
\begin{align}
    \sup_{z \in \mathcal{K}} \theta(z).\vspace*{-0.5em}
    \tag{\mbox{$\mathsf{DSIP}$}}
    \label{eq:dualsip}
\end{align}
With this definition, Equation~\eqref{eq:duality} may be read as the absence of duality gap between the dual problems \eqref{eq:SIP} and \eqref{eq:dualsip}, i.e., $\mathsf{val} \text{\eqref{eq:SIP}} =  \mathsf{val} \text{\eqref{eq:dualsip}}$. 
\begin{assumption}
The function $F(x)$ is $\mu$-strongly convex. \label{as:strongconvex}
\end{assumption}
\begin{lemma}
Under Assumptions~\ref{as:convex}-\ref{as:strongconvex}, the dual function $\theta(z)$ is differentiable, with gradient $\nabla \theta(z) = \arg\min\limits_{x \in \mathcal{X}} \mathcal{L}(x,z)$. The gradient $\nabla \theta(z)$ is $\frac{1}{\mu}$-Lipschitz continuous.
\label{lem:smoothness}
\end{lemma}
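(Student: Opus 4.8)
The plan is to begin by reducing the statement to properties of the minimizer map. Since $F$ is $\mu$-strongly convex (Assumption~\ref{as:strongconvex}) and adding the linear term $x^\top z$ preserves strong convexity, the map $x \mapsto \mathcal{L}(x,z)$ is $\mu$-strongly convex for each fixed $z \in \mathcal{K}$. Minimizing a strongly convex function over the non-empty compact convex set $\mathcal{X}$ yields a \emph{unique} minimizer, which I denote $x(z) := \arg\min_{x \in \mathcal{X}} \mathcal{L}(x,z)$; thus $x(\cdot)$ is well defined, and the lemma amounts to showing that $x(\cdot)$ is $\tfrac1\mu$-Lipschitz and that $\theta$ is differentiable with $\nabla\theta = x(\cdot)$. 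I would prove the Lipschitz estimate first, since it also delivers the continuity needed for the differentiability argument.

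For the Lipschitz estimate, fix $z_1, z_2 \in \mathcal{K}$ and write $x_1 = x(z_1)$, $x_2 = x(z_2)$. Using the second-order growth inequality of the $\mu$-strongly convex function $\mathcal{L}(\cdot,z_i)$ at its minimizer $x_i$, evaluated at the other point, I obtain $\mathcal{L}(x_2,z_1) \ge \mathcal{L}(x_1,z_1) + \tfrac{\mu}{2}\|x_1-x_2\|^2$ and $\mathcal{L}(x_1,z_2) \ge \mathcal{L}(x_2,z_2) + \tfrac{\mu}{2}\|x_1-x_2\|^2$. Summing the two and cancelling the $F$-contributions (the $F(x_1)$ and $F(x_2)$ terms cancel exactly, leaving only the bilinear terms $x_i^\top z_j$) collapses the left-hand side to $(x_2-x_1)^\top(z_1-z_2)$, giving $(x_1-x_2)^\top(z_2-z_1) \ge \mu\|x_1-x_2\|^2$. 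Applying Cauchy--Schwarz yields $\|z_1-z_2\|\,\|x_1-x_2\| \ge \mu\|x_1-x_2\|^2$, whence $\|x(z_1)-x(z_2)\| \le \tfrac1\mu\|z_1-z_2\|$. This establishes both the $\tfrac1\mu$-Lipschitz property and, in particular, the continuity of $x(\cdot)$.

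For differentiability I would exploit the structural fact that $\mathcal{L}(x,\cdot)$ is affine in $z$ for every fixed $x$, with $\nabla_z\mathcal{L}(x,z) = x$. Hence $\theta$ is an infimum of affine functions of $z$ and is therefore \emph{concave}, and $x(z)$ is a supergradient of $\theta$ at $z$: indeed $\theta(z') \le \mathcal{L}(x(z),z') = \theta(z) + x(z)^\top(z'-z)$ for all $z'$. Having already shown that the supergradient selection $z \mapsto x(z)$ is continuous, I invoke the standard fact that a concave function admitting a continuous supergradient selection is differentiable with that selection as its gradient; this gives $\nabla\theta(z) = x(z) = \arg\min_{x\in\mathcal{X}}\mathcal{L}(x,z)$, as claimed. (An alternative route is an envelope/Danskin-type directional-derivative computation: since the minimizer set is the singleton $\{x(z)\}$, one finds $\theta'(z;d) = x(z)^\top d$, linear in $d$, so $\theta$ is Gâteaux differentiable and, with the continuity above, in $C^1$.)

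I expect the main obstacle to be not the algebra of the Lipschitz bound, which is routine once the strong-convexity growth inequality is invoked, but rather justifying differentiability \emph{without} assuming $F$ smooth---strong convexity alone does not guarantee differentiability of $F$. The cleanest way around this is precisely the order above: establish Lipschitz continuity of $x(\cdot)$ first, then use that $x(z)$ is always a supergradient of the concave function $\theta$, so that a continuous supergradient selection forces $\theta \in C^1$ with $\nabla\theta = x(\cdot)$, circumventing any appeal to $\nabla F$.
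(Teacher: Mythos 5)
Your proof is correct, but it takes a genuinely different route from the paper's on both halves of the statement. For the Lipschitz bound, the paper works with subdifferentials: it writes the optimality conditions as $0 \in \partial(F + i_{\mathcal{X}})(x) + z$ and $0 \in \partial(F + i_{\mathcal{X}})(x') + z'$ (which requires the subdifferential sum rule, justified there via a relative-interior qualification and \cite[Th.~23.8]{rockafellar1970}), and then applies strong monotonicity of $\partial(F + i_{\mathcal{X}})$. You instead sum the two quadratic-growth inequalities at the constrained minimizers and cancel the $F$-terms, reaching the same key inequality $(x_1-x_2)^\top(z_2-z_1) \geq \mu\lVert x_1-x_2\rVert^2$ before Cauchy--Schwarz; this is more elementary and sidesteps subdifferential calculus and its qualification conditions entirely. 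For differentiability, the paper verifies the hypotheses of a Danskin-type max-function theorem \cite[Cor.~VI.4.4.5]{hiriart2013convex}, whereas you use concavity of $\theta$, exhibit $x(z)$ as a supergradient, and invoke the fact that a concave function admitting a continuous supergradient selection is differentiable with that selection as gradient; your parenthetical Danskin alternative is in fact the paper's route. Two minor caveats: the continuous-selection fact, though true, is itself nontrivial (it needs a one-dimensional monotonicity argument or a citation), so as written you have traded the paper's black box for another one; and it must be applied on an open set, so the differentiability argument should be run on all of $\mathbb{R}^m$ (where $\theta$ and $x(\cdot)$ are well defined and all your estimates hold verbatim) rather than on the cone $\mathcal{K}$, which need not have nonempty interior.
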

\begin{proof}
    Proof in Appendix~\ref{app:smoothness}.
\end{proof}
We underline that, due to the strong convexity of $\mathcal{L}(x,z)$ with respect to $x$, its argminimum is a singleton, which is assimilated to its unique element.
\begin{lemma}
Under Assumptions~\ref{as:convex} and \ref{as:strongconvex}, for every $y,z \in \mathcal{K}$, for every $\gamma \geq 0$,\vspace{-0.8em}
\begin{align}
    \theta(z + \gamma y) \geq \theta(z) + (\nabla \theta(z)^\top y) \gamma - \frac{\lVert y \rVert^2}{2 \mu} \gamma^2.
\end{align}
\label{lem:progress}\vspace*{-1.5em}
\end{lemma}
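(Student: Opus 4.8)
The plan is to recognize the claimed inequality as the concave analogue of the standard descent lemma: the quadratic lower bound enjoyed by any concave function whose gradient is Lipschitz continuous. Once that is in place, the statement follows by instantiating it along the ray $\gamma \mapsto z + \gamma y$.

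First I would record the two structural facts about $\theta$ that drive everything. Since $\mathcal{L}(x,z) = F(x) + x^\top z$ is affine in $z$, the dual function $\theta(z) = \min_{x \in \mathcal{X}} \mathcal{L}(x,z)$ is a pointwise infimum of affine functions of $z$, hence \emph{concave} (and finite everywhere, as the minimum over the compact set $\mathcal{X}$ of a continuous function). By Lemma~\ref{lem:smoothness}, $\theta$ is moreover differentiable with $\nabla \theta$ being $\frac{1}{\mu}$-Lipschitz continuous. These are exactly the hypotheses needed for the quadratic bound.

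Next, fix $z,y$ and $\gamma \geq 0$, set $w = z + \gamma y$, and consider the scalar map $t \mapsto \theta(z + t(w-z))$ on $[0,1]$, which is $C^1$ because $\nabla\theta$ is continuous. The fundamental theorem of calculus gives
\[
\theta(w) - \theta(z) - \nabla\theta(z)^\top(w-z) = \int_0^1 \bigl(\nabla \theta(z + t(w-z)) - \nabla \theta(z)\bigr)^\top (w-z)\, dt .
\]
Bounding the integrand from below by Cauchy--Schwarz together with the $\frac{1}{\mu}$-Lipschitz continuity of $\nabla\theta$ yields, for each $t$, the inequality $\bigl(\nabla \theta(z + t(w-z)) - \nabla \theta(z)\bigr)^\top (w-z) \geq -\frac{t}{\mu}\lVert w-z\rVert^2$, so the integral is at least $-\frac{1}{2\mu}\lVert w-z\rVert^2$. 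Substituting $w - z = \gamma y$, and using $\nabla\theta(z)^\top(w-z) = \gamma\,\nabla\theta(z)^\top y$ and $\lVert w-z\rVert^2 = \gamma^2 \lVert y\rVert^2$, produces exactly the claimed bound.

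As for obstacles, there is none of real substance; the two points to watch are the sign convention (for the concave $\theta$ we want a lower bound, so the Lipschitz term enters with a minus sign rather than the plus sign of the usual convex descent lemma) and the requirement that the entire segment from $z$ to $w$ lie where Lemma~\ref{lem:smoothness} applies. The latter is immediate, since $\theta$ is defined and differentiable on all of $\mathbb{R}^m$; alternatively, one notes that $z + (t\gamma)y \in \mathcal{K}$ for every $t \in [0,1]$, $\gamma \geq 0$, because $\mathcal{K}$ is a convex cone and hence closed under such nonnegative combinations.
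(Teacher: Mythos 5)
Your proof is correct and follows essentially the same route as the paper's: the paper also derives the inequality directly from the $\frac{1}{\mu}$-Lipschitz continuity of $\nabla\theta$ (Lemma~\ref{lem:smoothness}) via integration of the gradient along the ray from $z$ to $z+\gamma y$ and a Cauchy--Schwarz bound on the integrand. Your reparametrization over $[0,1]$ instead of $[0,\gamma]$ is an immaterial difference.
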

\begin{proof}
    The proof follows the proof of \cite[Lemma~3.4]{cerulli2022}. This comes directly from the $\frac{1}{\mu}$-Lipschitzness of $\nabla \theta$.
\end{proof}%
We prove now that we can replace the sup operator with the max operator in the formulation \eqref{eq:dualsip}, under the following additional assumption.
\begin{assumption}

    There exists $\hat{x} \in \mathcal{X}$, such that $\hat{x}^\top z < 0$ for all $z \in \mathcal{M}$, i.e., $\hat{x}^\top a(y) < 0$ for all $y \in \mathcal{Y}$. \label{as:slater}
\end{assumption}
\begin{lemma}
Under Assumptions~\ref{as:convex}--\ref{as:slater}, problem \eqref{eq:dualsip} admits an optimal solution. \label{lem:dualopti}
\end{lemma}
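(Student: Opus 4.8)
The plan is to exploit Assumption~\ref{as:slater} to show that $\theta$ is coercive along $\mathcal{K}$, in the sense that it tends to $-\infty$ as $\lVert z\rVert\to\infty$, and that the cone $\mathcal{K}$ is closed; existence of a maximizer then follows from a standard Weierstrass-type argument applied to a bounded maximizing sequence. Recall that $\theta(z)=\min_{x\in\mathcal{X}}\mathcal{L}(x,z)$ is concave, being a pointwise minimum of functions that are affine in $z$, and that it is continuous (indeed $C^1$) by Lemma~\ref{lem:smoothness}.

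First I would quantify the Slater condition. Since $\mathcal{M}$ is compact and $z\mapsto\hat{x}^\top z$ is continuous and strictly negative on $\mathcal{M}$, there is a constant $c>0$ with $\hat{x}^\top z\le -c$ for all $z\in\mathcal{M}$; let $M=\max_{z\in\mathcal{M}}\lVert z\rVert<\infty$. Writing an arbitrary $z\in\mathcal{K}$ as $z=\sum_i\lambda_i z_i$ with $\lambda_i\ge 0$ and $z_i\in\mathcal{M}$, the triangle inequality gives $\lVert z\rVert\le M\sum_i\lambda_i$, hence $\sum_i\lambda_i\ge \lVert z\rVert/M$, while $\hat{x}^\top z=\sum_i\lambda_i\,\hat{x}^\top z_i\le -c\sum_i\lambda_i$. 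Evaluating $\mathcal{L}$ at the feasible point $\hat{x}$ then yields the key estimate
\begin{align*}
\theta(z)\le \mathcal{L}(\hat{x},z)=F(\hat{x})+\hat{x}^\top z\le F(\hat{x})-\frac{c}{M}\lVert z\rVert,\qquad z\in\mathcal{K}.
\end{align*}
In particular $v^\star:=\sup_{z\in\mathcal{K}}\theta(z)\le F(\hat{x})<\infty$, and since $0\in\mathcal{K}$ gives $v^\star\ge\theta(0)>-\infty$, the value is finite. Taking a maximizing sequence $(z^n)\subset\mathcal{K}$, the estimate forces $\tfrac{c}{M}\lVert z^n\rVert\le F(\hat{x})-\theta(z^n)$; as $\theta(z^n)\to v^\star$ is finite, $(z^n)$ is bounded.

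The remaining, and main, difficulty is that the conic hull of a compact set need not be closed, so I must verify that the subsequential limit of $(z^n)$ stays in $\mathcal{K}$; this is exactly where the strict separation furnished by Assumption~\ref{as:slater} is essential. By Carath\'eodory's theorem for cones, each $z^n$ can be written with at most $m$ generators, $z^n=\sum_{i=1}^m\lambda_i^n z_i^n$ with $\lambda_i^n\ge 0$ and $z_i^n\in\mathcal{M}$. The bound $\sum_i\lambda_i^n\le -\hat{x}^\top z^n/c$ together with the boundedness of $(z^n)$ shows the coefficients $(\lambda_i^n)$ are bounded, while $(z_i^n)$ lie in the compact set $\mathcal{M}$; passing to a common convergent subsequence gives $\lambda_i^n\to\lambda_i\ge 0$ and $z_i^n\to z_i\in\mathcal{M}$, so that the limit $z^\star=\sum_i\lambda_i z_i$ belongs to $\mathcal{K}$ (this simultaneously shows $\mathcal{K}$ is closed). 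Continuity of $\theta$ then gives $\theta(z^\star)=\lim\theta(z^n)=v^\star$, so $z^\star$ is an optimal solution of \eqref{eq:dualsip}. I expect the only delicate point to be this closedness argument; the coercivity estimate and the extraction of the limit are routine once the Slater gap $c$ has been fixed.
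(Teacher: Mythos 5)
Your proof is correct, and its engine is exactly the paper's: both evaluate the Lagrangian at the Slater point to get $\theta(z)\le\mathcal{L}(\hat{x},z)=F(\hat{x})+\hat{x}^\top z\le F(\hat{x})-c\sum_i\lambda_i$, which caps the conic coefficients of any near-optimal dual point. The two proofs part ways only in the compactness endgame. The paper stays set-based: it notes that every $z\in\mathcal{K}$ with $\theta(z)\ge \mathsf{val}\text{\eqref{eq:dualsip}}-1$ lies in the compact set $q\,\mathsf{conv}(\mathcal{M})$ with $q=c^{-1}(F(\hat{x})-\mathsf{val}\text{\eqref{eq:dualsip}}+1)$, and applies Weierstrass there; since that compact set sits inside $\mathcal{K}$, the closedness of $\mathcal{K}$ never has to be discussed. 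You instead run a sequential argument: coercivity of $\theta$ along $\mathcal{K}$ gives a bounded maximizing sequence, and Carath\'eodory's theorem for cones (at most $m$ generators) plus compactness of $\mathcal{M}$ lets you pass to the limit inside $\mathcal{K}$ --- in effect you prove that bounded portions of $\mathcal{K}$ are sequentially closed under the Slater condition, a fact the paper never needs. Your route is longer but more self-contained: the paper's compactness claim for $q\,\mathsf{conv}(\mathcal{M})$ is itself a Carath\'eodory consequence left implicit, and your treatment also delivers the side result that $\mathcal{K}$ is closed, which is of independent interest; the paper's route is shorter and dispenses with the subsequence bookkeeping. Two small remarks: your division by $M=\max_{z\in\mathcal{M}}\lVert z\rVert$ silently requires $M>0$, which does hold because the Slater condition forces $0\notin\mathcal{M}$ and $\mathcal{M}\neq\emptyset$, but deserves a word (the paper's variant, which bounds $\sum_i\lambda_i$ rather than $\lVert z\rVert$, avoids this entirely); and your appeal to Lemma~\ref{lem:smoothness} for continuity of $\theta$ is legitimate here since Assumption~\ref{as:strongconvex} is in force, though plain continuity already follows from $\theta$ being a finite infimum of affine functions.
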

\begin{proof}
Proof in Appendix~\ref{app:dualopti}.
\end{proof}

\subsection{The CP algorithm with inexact oracle and its dual interpretation}\label{subsec:CP} 
The CP algorithm (Alg.~\ref{alg:CP}) is a variant of Blankenship and Falk's algorithm \cite{blankenship1976infinitely} allowing for approximate solutions of the separation problems, and for more flexibility with respect to the finite set of constraints $\mathcal{M}^k$ maintained at iteration $k$. The master problem~\eqref{eq:Rk} at iteration $k$ is a relaxation of problem~\eqref{eq:lsip} since $\mathcal{M}^k \subset \mathsf{conv}(\mathcal{M})$ by construction. We compute a primal solution $x^k$ of this relaxation \eqref{eq:Rk} and the Lagrange multipliers $(\lambda^k_z)_{z\in\mathcal{M}^k}$ associated to the constraints $x^\top z \leq 0$ for $z\in\mathcal{M}^k$. The existence of these Lagrange multipliers is discussed in Remark~\ref{rem:lagrange}. 
The point $x^k$ is then provided to the $\delta$-oracle that solves the separation problem $\max\limits_{y \in \mathcal{Y}} \: G(x^k,y) = \max\limits_{y \in \mathcal{Y}} \: a(y)^\top x^k$ and returns an approximate solution $y^k$ as well as $\xi^k := a(y^k) \in \mathcal{M}$. The finite set of constraints used in the next iteration $k+1$ is defined by adding $\xi^k$ to the set $\mathcal{B}^k$, which may be either just $\mathcal{M}^k$ or a subset of its convex hull such that $z^k = \sum_{z \in \mathcal{M}^k} \lambda^k_z z$ belongs to $\mathsf{cone}(\mathcal{B}^k)$. The algorithm stops whenever $(\xi^k)^\top x^k \leq \epsilon$.
\vspace*{-1em}
\begin{algorithm}[h!]
{\small
\caption{CP algorithm for \eqref{eq:lsip}, with constraints management}
\label{alg:CP}
\begin{algorithmic}[1]
    \State{\textbf{Input:} Oracle with parameter $\delta \in [0,1)$, tolerance $\epsilon \in \mathbb{R}_+$. Let $k\gets0$, $\mathcal{M}^0 \gets \emptyset$, and
     $\nu_0 \gets \infty$.}
     \While{$\nu_k  > \epsilon$}
        \State \label{step:masterprob} Compute an optimal solution $x^k$ of the relaxation
        \begin{align}
            \left\{ \begin{array}{rl}\label{eq:Rk}
          \min\limits_{x \in \mathcal{X}} & F(x) \\
                \text{s.t.} &   x^\top z \leq 0, \quad \forall z \in \mathcal{M}^k,
    \end{array}\right.
    \tag{\mbox{$R_k$}}
    \end{align}
    and compute $z^k = \sum_{z \in \mathcal{M}^{k}} \lambda^{k}_z z$, where $\lambda^{k}_z$ is a Lagrange multiplier associated with the constraint $x^\top z \leq 0$ for $z \in \mathcal{M}^k$.
    \State Call the oracle to compute an approximate solution $y^{k} = \hat{y}(x^k)$ of $\max\limits_{y \in \mathcal{Y}} \: a(y)^\top x^k$. Define $\xi^k \gets a(y^k)  \in \mathcal{M}$.
        \State {$\mathcal{M}^{k+1} \gets \mathcal{B}^{k} \cup \{ \xi^k \} $, where $\mathcal{B}^{k}$ is any finite subset of  $\mathsf{conv}(\mathcal{M}^k)$ s.t.\ $z^k \in \mathsf{cone}(\mathcal{B}^{k})$\label{step:management}}.
        \State{$\nu_{k+1} \gets (\xi^k)^\top x^k$}
	\State {$k \gets k + 1$}
     \EndWhile
    \State  Return $x^k$.    \label{steptermCP}
\end{algorithmic}}
\end{algorithm} \vspace*{-1em}

\noindent We insist on particular cases regarding the set $\mathcal{B}^k$ used in Step~\ref{step:management} of Alg.~\ref{alg:CP}:
\begin{itemize}
    \item The set $\mathcal{B}^k$ may be equal to $\mathcal{M}^k$ at every step.
    \item The set $\mathcal{B}^k$ may be the set of atoms $z \in \mathcal{M}^k$ such that $\lambda^{k}_z > 0$, in which case Step~\ref{step:management} consists in dropping all the inactive constraints. 
    \item The set $\mathcal{B}^k$ may be a subset of $\mathsf{conv}(\mathcal{M}^k)$ of size at most $M$, and such that $z^k \in \mathsf{cone}(\mathcal{B}^k)$. For $M=2$, e.g., we can take $\mathcal{B}^k = \left\{ \frac{1}{\sum\limits_{z\in \mathcal{M}^{k}} \lambda^{k}_z} \sum\limits_{z\in \mathcal{M}^{k}} \lambda^{k}_z z \right\}$. In this case, following what is known as the constraint-aggregation approach \cite{kennedy2015}, we obtain a bounded-memory algorithm.
    \item We can also think about intermediate approaches where we do not drop every inactive constraint, but only the ones that have been staying inactive for a given number of iterations. Such strategies are also included in the framework of Alg.~\ref{alg:CP}. 
\end{itemize}

We now present a dual interpretation of Alg.~\ref{alg:CP}, as done in \cite{cerulli2022} for the case of an exact oracle. For every iteration $k$, we define the following restriction of~\eqref{eq:dualsip}:
\begin{equation}\label{eq:Dk}
\max\limits_{z \in \mathsf{cone}(\mathcal{M}^k)} \theta(z). \tag{\mbox{$D_k$}}
\end{equation}
Lemma~\ref{lem:strongdualityk} states that the restriction $\eqref{eq:Dk}$ of the problem~\eqref{eq:dualsip} may be seen as the Lagrangian dual of the relaxation $\eqref{eq:Rk}$ of \eqref{eq:lsip} solved at iteration $k$.
\begin{lemma} \label{lem:strongdualityk}
    Under Assumptions~\ref{as:convex}--\ref{as:slater}, problems $\eqref{eq:Rk}$ and $\eqref{eq:Dk}$ forms a pair of primal-dual problems and $\mathsf{val}\eqref{eq:Rk} = \mathsf{val}\eqref{eq:Dk}$. There exists a pair of optimal primal-dual solutions $(x^k, z^k)$, and, for every such pair, $x^k = \nabla \theta (z^{k})$.
\end{lemma}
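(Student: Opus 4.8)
The plan is to recognize \eqref{eq:Dk} as the ordinary Lagrangian dual of the finitely-constrained convex program \eqref{eq:Rk} and then invoke a Slater-type strong duality theorem. First I would use that $\mathcal{M}^k$ is a finite set: writing a nonnegative multiplier $\lambda_z$ for each constraint $x^\top z \leq 0$, $z \in \mathcal{M}^k$, the Lagrangian of \eqref{eq:Rk} is $F(x) + x^\top\!\big(\sum_{z \in \mathcal{M}^k}\lambda_z z\big)$, so the dual function is $\theta\big(\sum_{z}\lambda_z z\big)$. As $\lambda$ ranges over the nonnegative orthant, $\sum_{z}\lambda_z z$ ranges exactly over $\mathsf{cone}(\mathcal{M}^k)$ and conversely; hence $\max_{\lambda \geq 0}\theta\big(\sum_z \lambda_z z\big)$ coincides with $\max_{z \in \mathsf{cone}(\mathcal{M}^k)}\theta(z)$, which is \eqref{eq:Dk}. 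This identifies the dual.

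Next I would check a genuine Slater condition for \eqref{eq:Rk}. By construction $\mathcal{M}^k \subset \mathsf{conv}(\mathcal{M})$, and Assumption~\ref{as:slater} gives $\hat{x} \in \mathcal{X}$ with $\hat{x}^\top w < 0$ for all $w \in \mathcal{M}$. Since any $z \in \mathsf{conv}(\mathcal{M})$ is a convex combination of such $w$, we get $\hat{x}^\top z < 0$ (in particular $0 \notin \mathsf{conv}(\mathcal{M})$, so no constraint is vacuous), meaning $\hat{x}$ strictly satisfies every constraint of \eqref{eq:Rk}. The feasible set is thus nonempty and, being closed inside the compact set $\mathcal{X}$, compact; as $F$ is continuous and strongly convex (Assumptions~\ref{as:convex}--\ref{as:strongconvex}), \eqref{eq:Rk} has a unique optimal solution $x^k$. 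Slater's condition then yields strong duality, $\mathsf{val}\eqref{eq:Rk} = \mathsf{val}\eqref{eq:Dk}$, together with an optimal multiplier vector $\lambda^k \geq 0$ (existence being the content of Remark~\ref{rem:lagrange}); setting $z^k := \sum_{z \in \mathcal{M}^k}\lambda^k_z z \in \mathsf{cone}(\mathcal{M}^k)$ gives an optimal solution of \eqref{eq:Dk}, so $(x^k,z^k)$ is a primal-dual optimal pair.

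Finally, for an arbitrary primal-dual optimal pair $(x^k, z^k)$, I would identify $x^k$ with $\nabla\theta(z^k)$ through the no-gap chain of inequalities. Primal feasibility of $x^k$ with $\lambda^k \geq 0$ gives $(x^k)^\top z^k = \sum_z \lambda^k_z (x^k)^\top z \leq 0$, whence $\theta(z^k) = \min_{x \in \mathcal{X}}\big(F(x) + x^\top z^k\big) \leq F(x^k) + (x^k)^\top z^k \leq F(x^k)$. Strong duality forces $\theta(z^k) = \mathsf{val}\eqref{eq:Dk} = \mathsf{val}\eqref{eq:Rk} = F(x^k)$, so both inequalities are equalities; in particular $x^k$ attains $\min_{x \in \mathcal{X}}\mathcal{L}(x, z^k)$. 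By Lemma~\ref{lem:smoothness} this minimizer is the unique point $\nabla\theta(z^k)$, hence $x^k = \nabla\theta(z^k)$, which also reconfirms the uniqueness of the primal optimum.

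The only delicate points are verifying that the Slater condition propagates from $\mathcal{M}$ to $\mathsf{conv}(\mathcal{M}) \supseteq \mathcal{M}^k$ (so that it is a legitimate constraint qualification for \eqref{eq:Rk}) and correctly matching the multiplier form of the Lagrangian dual with the cone-restricted problem \eqref{eq:Dk}; everything else is a routine application of convex strong duality and the smoothness of $\theta$ established in Lemma~\ref{lem:smoothness}.
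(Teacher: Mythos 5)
Your proof is correct, but it reaches strong duality by a genuinely different route than the paper. You write out the finite multiplier form of the Lagrangian dual of \eqref{eq:Rk}, note that $\{\sum_{z}\lambda_z z : \lambda \geq 0\} = \mathsf{cone}(\mathcal{M}^k)$, and invoke the classical Slater strong-duality theorem for convex programs with finitely many (here linear) constraints plus an abstract convex set constraint, obtaining the zero gap and an optimal multiplier vector --- hence a dual solution $z^k$ --- in one stroke. The paper never appeals to that theorem: it observes that \eqref{eq:Rk}--\eqref{eq:Dk} is literally the generic pair \eqref{eq:lsip}--\eqref{eq:dualsip} with $\mathcal{M}$ replaced by $\mathcal{M}^k$, so the value equality is Eq.~\eqref{eq:duality} (Sion's minimax theorem) re-instantiated, and dual attainment is Lemma~\ref{lem:dualopti} re-instantiated (whose proof uses the Slater point to confine a superlevel set of $\theta$ to a compact subset of the cone). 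The two proofs coincide in the remaining steps: the propagation of Assumption~\ref{as:slater} from $\mathcal{M}$ to $\mathsf{conv}(\mathcal{M}) \supseteq \mathcal{M}^k$, and the closing identification, where equality is forced in $\theta(z^k) \leq \mathcal{L}(x^k,z^k) \leq F(x^k)$ and Lemma~\ref{lem:smoothness} turns ``$x^k$ minimizes $\mathcal{L}(\cdot,z^k)$'' into $x^k = \nabla\theta(z^k)$. Your route is more self-contained and makes the multipliers $\lambda^k_z$ that Alg.~\ref{alg:CP} actually manipulates explicit from the outset; the paper's route is more economical (it recycles Eq.~\eqref{eq:duality} and Lemma~\ref{lem:dualopti} verbatim) and keeps the dual in the cone-constrained form on which the FCFW interpretation rests. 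Two minor points: when citing the classical theorem, use a version that permits the abstract constraint $x \in \mathcal{X}$ with strict feasibility required only for the functional constraints (the geometric-multiplier formulation does); and your parenthetical crediting multiplier existence to Remark~\ref{rem:lagrange} inverts that remark's logic --- there the multipliers are extracted by conically decomposing an already-obtained dual solution $z^k$, not the other way around.
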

\begin{proof}
    Proof in Appendix~\ref{app:strongdualityk}.
\end{proof}%
Having shown the existence of a dual optimal solution $z^k$, we can rigorously define the Lagrangian multipliers $\lambda_z^k$ used in Alg.~\ref{alg:CP} as follows.
\begin{remark} \label{rem:lagrange}
The decomposition of $z^k \in \mathsf{cone}(\mathcal{M}^k)$ as a conic combination of elements of $\mathcal{M}^k$ yields the Lagrange multipliers $(\lambda^k_z)_{z \in \mathcal{M}^k}$ invoked at Step~\ref{step:masterprob} of Alg.~\ref{alg:CP}.\end{remark}

Based on Lemma~\ref{lem:strongdualityk}, we deduce that, during the execution of Alg.~\ref{alg:CP}, the dual sequence $z^k$ instantiates the iterates of a cone-constrained fully corrective Frank–Wolfe (FCFW) algorithm \cite{locatello2017greedy} solving the dual problem \eqref{eq:dualsip}. The primal and dual interpretations of each step of the generic iteration $k$ are presented in Table~\ref{tab:CPFW}.
\begin{table}[h!]
    \centering 
    \scalebox{0.95}{
    \begin{tabular}{|c|c|c|c|c|c|}
    \hline
         & \makecell{Primal perspective:\\\textbf{CP}} & Link & \makecell{Dual perspective: \\ \textbf{FCFW}}\\
         \hline \hline
         &&&\\[-0.5em]
        \textit{Step 1} & \makecell{Solve \eqref{eq:Rk}, \\ store the solution $x^k$, \\ and the dual vector $z^k$} 
        & \makecell{Strong \\ duality} & \makecell{Solve \eqref{eq:Dk},
        \\ store the solution $z^k$, and \\ the gradient $\nabla \theta (z^{k}) = x^k$ } \\ 
        &&& \\[-0.5em]
        \hline 
        &&&\\[-0.5em]
        \textit{Step 2} & \makecell{Call the $\delta$-oracle to solve \\ $\max\limits_{y \in \mathcal{Y}} a(y)^\top x^k  ,$
  \vspace{1mm}\\ and store the solution $y^k$ } & \makecell{$\xi = a(y)$ \\ $\mathcal{M} = a(\mathcal{Y})$ \\$x^k = \nabla \theta (z^{k})$} & \makecell{Call the $\delta$-oracle to solve \vspace{1mm}\\ $\underset{\xi\in \mathcal{M}}{\max} \:  \xi^\top \nabla \theta (z^{k}),$ \vspace{1mm}\\ and store the solution $\xi^k$}\\
        &&&\\[-0.5em]
        \hline
        &&&\\[-0.5em]
         \textit{Step 3} & \makecell{$\mathcal{M}^{k+1} \gets \mathcal{B}^k \cup \{ \xi^k\}$}  &   & \makecell{$\mathcal{M}^{k+1} \gets \mathcal{B}^k \cup \{ \xi^k\}$} \\
         &&&\\[-0.5em]
         \hline
         &&&\\[-0.5em]
          \makecell{\textit{Stopping} \\ \textit{criterion}} & \makecell{$(\xi^k)^\top x^k \leq \epsilon$} & $x^k = \nabla \theta (z^{k})$ & \makecell{$ (\xi^k)^\top \nabla \theta (z^{k}) \leq \epsilon$}\\ 
         &&& \\
         \hline
    \end{tabular}}
    \centering
   \caption{The $k$-th iteration of CP, and the corresponding FCFW algorithm.}
    \label{tab:CPFW}
\end{table}

\subsection{Convergence rate for the CP algorithm with inexact oracle}

We define the constant $R= \sup_{z \in \mathcal{M}} \lVert z \rVert = \sup_{z \in \mathsf{conv}(\mathcal{M})} \lVert z \rVert $. According to Lemma~\ref{lem:dualopti}, problem~\eqref{eq:dualsip} admits an optimal solution $z^* \in \mathcal{K}$. We define $\tau = \inf \{ t \geq 0 \colon z^* \in t \: \mathsf{conv}(\mathcal{M}) \}$. The scalar $\tau$ plays a central role in the convergence rate analysis of the CP algorithm with inexact oracle, conducted in the following theorem. An interesting future research direction is finding an efficient approach to estimate (an upper bound on) $\tau$  without computing $z^*$.
\begin{theorem}
    Under Assumption~\ref{as:convex}-\ref{as:slater}, denoting by $x^*$ an optimal solution of \eqref{eq:lsip}, if Alg.~\ref{alg:CP} executes iteration $k \in \mathbb{N}$, then \vspace{-0.8em}
    \begin{align}
        F(x^*) - F(x^k) \leq \frac{2 \: R^2 \tau^2}{\mu \: (1-\delta)^2} \: \frac{1}{k+2}. \vspace*{-0.5em}\label{eq:optimality_gap} 
    \end{align} \label{th:convrate}\vspace{-1em}
\end{theorem}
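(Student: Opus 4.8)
The plan is to pass to the Lagrangian dual and read the primal optimality gap as a dual suboptimality, then run the Frank--Wolfe analysis suggested by Table~\ref{tab:CPFW}. Set $h_k := F(x^*) - F(x^k)$. By Sion's minimax identity~\eqref{eq:duality} together with Lemma~\ref{lem:dualopti} we have $F(x^*) = \mathsf{val}\eqref{eq:dualsip} = \theta(z^*)$, while Lemma~\ref{lem:strongdualityk} gives $F(x^k) = \mathsf{val}\eqref{eq:Rk} = \mathsf{val}\eqref{eq:Dk} = \theta(z^k)$ with $x^k = \nabla\theta(z^k)$. Hence $h_k = \theta(z^*) - \theta(z^k) \ge 0$, and the goal becomes a decay estimate on the dual gap.

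I would first establish a one-step decrease. Because Step~\ref{step:management} guarantees $z^k \in \mathsf{cone}(\mathcal{B}^k)$ while $\xi^k \in \mathcal{M}^{k+1} = \mathcal{B}^k \cup \{\xi^k\}$, the whole ray $\{z^k + \gamma\xi^k : \gamma \ge 0\}$ lies in $\mathsf{cone}(\mathcal{M}^{k+1})$; since $z^{k+1}$ solves \eqref{eq:Dk} at iteration $k+1$, i.e.\ maximizes $\theta$ over that cone, one has $\theta(z^{k+1}) \ge \theta(z^k + \gamma\xi^k)$ for every $\gamma \ge 0$. Inserting this into Lemma~\ref{lem:progress} with $y = \xi^k$ and using $\lVert \xi^k \rVert \le R$ yields, for every $\gamma \ge 0$, the inequality $h_{k+1} \le h_k - d_k\gamma + \tfrac{R^2}{2\mu}\gamma^2$, where $d_k := (x^k)^\top \xi^k$; the optimization over $\gamma$ is deferred until $d_k$ is shown nonnegative.

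The crux is to lower bound $d_k$ by $h_k$, and this is where the inexactness of the oracle enters. By concavity of $\theta$ and $x^k = \nabla\theta(z^k)$, we get $h_k \le (x^k)^\top(z^* - z^k)$. The term $(x^k)^\top z^k$ vanishes: as $z^k$ maximizes $\theta$ over a cone, the concave map $t \mapsto \theta(t z^k)$ is maximal at $t = 1$ over $t \ge 0$, forcing $(x^k)^\top z^k = 0$. Writing $z^* = \tau w$ with $w \in \mathsf{conv}(\mathcal{M})$ (the infimum defining $\tau$ is attained by compactness), a convex-combination bound gives $(x^k)^\top z^* \le \tau \max_{\xi \in \mathcal{M}} (x^k)^\top \xi = \tau\,\phi(x^k)$. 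Thus $h_k \le \tau\,\phi(x^k)$, which in particular forces $\phi(x^k) \ge 0$. The $\delta$-oracle guarantee~\eqref{eq:oraclebound}, specialized to this sign, then gives $d_k = G(x^k, y^k) \ge (1-\delta)\,\phi(x^k) \ge \tfrac{1-\delta}{\tau}\,h_k \ge 0$. With $d_k \ge 0$ the choice $\gamma = \mu d_k/R^2$ is admissible and produces $h_{k+1} \le h_k - \tfrac{\mu}{2R^2}d_k^2 \le h_k - C\,h_k^2$ with $C = \tfrac{\mu(1-\delta)^2}{2R^2\tau^2}$, whose reciprocal $1/C$ is exactly the constant appearing in~\eqref{eq:optimality_gap}.

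It then remains to solve this scalar recursion. I would proceed by induction on the claim $h_k \le \tfrac{1}{C(k+2)}$: on the range $h \le \tfrac{1}{2C}$ the map $h \mapsto h - Ch^2$ is nondecreasing, so the hypothesis propagates after checking the one-line inequality $\tfrac{k+1}{(k+2)^2} \le \tfrac{1}{k+3}$. The base of the induction comes from the unconditional one-step bound $h_1 \le \max_{h \ge 0}(h - Ch^2) = \tfrac{1}{4C} \le \tfrac{1}{3C}$, which seeds the argument. The main obstacle I anticipate is exactly the middle step: turning the oracle's \emph{relative} accuracy $\delta\lvert\phi(x^k)\rvert$ into a Frank--Wolfe gap certificate. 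This rests on two points that must be handled with care---the identity $(x^k)^\top z^k = 0$ from cone optimality, and the sign control $\phi(x^k) \ge 0$ from the geometric bound through $\tau$---because only once $\phi(x^k) \ge 0$ does the relative error collapse to the clean multiplicative factor $(1-\delta)$ that ultimately drives the rate.
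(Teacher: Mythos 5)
Your proposal follows the paper's own route in all essential respects: you pass to the dual via Lemma~\ref{lem:strongdualityk} so that $h_k = \theta(z^*)-\theta(z^k)$, derive the one-step progress inequality from Lemma~\ref{lem:progress} together with $z^k \in \mathsf{cone}(\mathcal{B}^k)$, prove $h_k \le \tau\,\phi(x^k)$ from concavity, the cone-optimality identity $(x^k)^\top z^k = 0$, and $z^* \in \tau\,\mathsf{conv}(\mathcal{M})$, and convert the relative oracle error into the factor $(1-\delta)$. Two of your deviations are legitimate and arguably cleaner: you obtain the sign $\phi(x^k)\ge 0$ from $0 \le h_k \le \tau\phi(x^k)$ rather than from the unmet stopping criterion (the paper's route), and you close the recursion by optimizing the step size ($\gamma = \mu d_k/R^2$, giving $h_{k+1}\le h_k - C h_k^2$) and invoking monotonicity of $h\mapsto h-Ch^2$ on $[0,\tfrac{1}{2C}]$, whereas the paper plugs the fixed schedule $\tilde\gamma = \tfrac{2}{k+2}$ into an affine-in-$\Delta_k$ recursion; both variants terminate with the same elementary inequality $(k+1)(k+3)\le (k+2)^2$.

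There is, however, a genuine gap: your induction is seeded at $k=1$, so the theorem's claim at $k=0$ is never proved, even though the statement covers it (iteration $0$ is always executed, and the paper's $\mathbb{N}$ contains $0$). The $k=0$ claim reads $h_0 \le \tfrac{1}{2C} = \tfrac{R^2\tau^2}{\mu(1-\delta)^2}$, and it cannot be extracted from your recursion: the best backward use of the one-step bound (from $h_1\ge 0$ and $h_1 \le h_0 - Ch_0^2$ one gets $h_0 \le \tfrac{1}{C}$) falls short by a factor of $2$, and it moreover presupposes that iteration $1$ is executed, which the $k=0$ claim does not grant. The paper closes this case with a separate smoothness argument that your proof lacks: since $z^0=0$, concavity and the optimality identity $\nabla\theta(z^*)^\top z^* = 0$ give $h_0 \le (\nabla\theta(0)-\nabla\theta(z^*))^\top z^*$, and Cauchy--Schwarz with the $\tfrac{1}{\mu}$-Lipschitz gradient (Lemma~\ref{lem:smoothness}) yields $h_0 \le \tfrac{1}{\mu}\lVert z^*\rVert^2 \le \tfrac{R^2\tau^2}{\mu} \le \tfrac{1}{2C}$; you need this (or an equivalent) base step. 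A second, minor omission: your inequality $d_k \ge \tfrac{1-\delta}{\tau}h_k$ and your constant $C$ presuppose $\tau>0$; the degenerate case $\tau=0$, where the claimed bound is $0$, should be dispatched explicitly (there $z^*=0$, so your own estimate $h_k \le (x^k)^\top z^* = 0$ forces $h_k=0$), as the paper does at the start of its proof.
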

\begin{proof}
We define the optimality gap $\Delta_k = F(x^*) - F(x^k)  = \mathsf{val}\text{\eqref{eq:lsip}} - F(x^k)$. We emphasize that at each iteration $k$, $ \theta(z^{k}) = F(x^k)$, thus $\Delta_k$ may also be seen as the optimality gap in the dual problem~\eqref{eq:dualsip}, i.e., $\Delta_k = \mathsf{val}\text{\eqref{eq:dualsip}} - F(x^k) = \theta(z^*) - \theta(z^{k})$. We prove the inequality~\eqref{eq:optimality_gap} by induction. When $\tau = 0$ (i.e., 0 is a dual optimal solution), inequality~\eqref{eq:optimality_gap} holds trivially. 
Indeed, for every $k$, $0$ is a feasible point of the dual problem of \eqref{eq:Rk}, thus $\theta(0) \leq \theta(z^k)$. Since $\theta(z) = \min_{x \in \mathcal{X}} F(x) + x^\top z $,  $\theta(0) = \min_{x \in \mathcal{X}} F(x) = F(x^*).$ Combining this with the fact that, for every $k,$ $F(x^k) \leq F(x^*)$ and $\theta(z^k) = F(x^k)$, we have: $F(x^k) \leq F(x^*) = \theta(0) \leq \theta(z^k) = F(x^k)$. Thus, $F(x^k)  = F(x^*)$, and inequality~\eqref{eq:optimality_gap} holds.
We assume now that $\tau > 0$.
\vspace*{-0.6em}

\paragraph{\textbf{Base case (}$k=0$\textbf{).}} Since $\theta$ is concave, $\Delta_0 =  \theta(z^*) - \theta(z^0)  \leq \nabla \theta(z^0)^\top  (z^* - z^0) = \theta(z^0)^\top  z^* ,$
with the last equality following from $z^0 = 0$ (as $\mathcal{M}^0 = \emptyset)$. We remark that $\nabla \theta(z^0)^\top  z^*  =  (\nabla\theta(z^0)-\nabla\theta(z^*))^\top  z^*$ since $\nabla\theta(z^*)^\top  z^* = 0$ by optimality of $z^*$. Hence, $\Delta_0 \leq (\nabla \theta(z^0) -  \nabla \theta(z^*))^\top  z^*  \leq \lVert \nabla \theta(z^0) -  \nabla \theta(z^*) \rVert \: \lVert z^* \rVert$, where the last inequality is the Cauchy-Schwartz inequality. Using the $\frac{1}{\mu}$-Lipschitzness of $\nabla \theta$ (Lemma~\ref{lem:smoothness}), we know that $\lVert \nabla \theta(z^0) -  \nabla \theta(z^*) \rVert \leq \frac{1}{\mu} \lVert z^0 - z^* \rVert  = \frac{1}{\mu} \lVert z^* \rVert$. Since $z^* \in \tau \mathsf{conv}(\mathcal{M})$,
$\Delta_0 \leq \frac{1}{\mu} \lVert z^* \rVert^2 \leq \frac{(R\tau)^2}{\mu} \leq \frac{(R\tau)^2}{(1-\delta)^2\mu}$ as  $1-\delta \in (0,1]$.
\vspace*{-0.6em}

\paragraph{\textbf{Induction.}} We suppose that the algorithm runs $k+1$ iterations and does not meet the stopping condition; we assume that property~\eqref{eq:optimality_gap} is true for $k$. Since $z^k \in \mathsf{cone}(\mathcal{B}^k)$, and $\mathcal{M}^{k+1} = \mathcal{B}^k \cup \{ \xi^k \}$, we deduce that $z^k + \gamma \xi^k \in \mathsf{cone}(\mathcal{M}^{k+1})$, for every $\gamma \geq 0$, implying $\theta(z^{k+1}) \geq \theta(z^k + \gamma \xi^k)$. Moreover, Lemma~\ref{lem:progress} yields a lower bound on the progress made during iteration $k+1$:\vspace{-1em}
    \begin{align}
        \theta(z^{k+1}) \geq \theta(z^k + \gamma \xi^k) \geq \theta(z^k) + \gamma \: \nabla \theta(z^k)^\top  \xi^k  - \frac{\lVert  \xi^k \rVert^2}{2 \mu} \gamma^2,
    \end{align}
for every $\gamma \geq 0$. Multiplying by $-1$, adding $\theta(z^*)$ to both left and right-hand sides of the above inequality, and using $\lVert \xi^{k} \rVert \leq R$, we have that  
    \begin{equation}
        \Delta_{k+1} \leq \Delta_{k} - \gamma \: \nabla \theta (z^{k})^\top \xi^{k}   + \frac{R^2}{2\mu} \gamma^2,
         \label{eq:gamma}
    \end{equation}
for every $\gamma \geq 0$. In addition, by concavity of $\theta,$ $\Delta_k = \theta(z^*) - \theta(z^{k}) \leq  \nabla \theta (z^{k})^\top (z^* - z^{k})$. Note that we have $\nabla \theta (z^k)^\top z^k = 0$, following from the first-order optimality condition holding at $1$ of the differentiable function $\alpha(t) = \theta(t z^k)$. Indeed, $\alpha'(1) =(\nabla \theta (z^k))^\top z^k = 0$, because (i) $1$ is optimal for $\alpha$ since $z^k \in \underset{z \in \mathsf{cone}(\mathcal{M}^k)}{\text{argmax}} \theta(z)$, (ii) $1$ lies in the interior of the definition domain of $\alpha$.

Thus, $ \Delta_k \leq \nabla \theta (z^k)^\top z^*$. As $z^* \in \tau\, \mathsf{conv}(\mathcal{M})$, 
    \begin{equation}
       \Delta_k \leq \max_{z \in \tau \mathsf{conv}(\mathcal{M})} \nabla \theta (z^k)^\top z = \tau \: \max_{z \in  \mathcal{M}} \nabla \theta (z^k)^\top z = \tau  \phi(x^k),
       \label{eq:control}
    \end{equation} 
where the last equality follows from $\nabla \theta(z^k) = x^{k}$, and from the definition of the value function $\phi(x) = \max_{y \in \mathcal{Y}} x^\top a(y)$. The stopping criterion $a(y^k)^\top x^k = (\xi^k)^\top x^k \leq \epsilon$ is not met at the end of iteration $k$, as iteration $k+1$ is executed. Therefore, $\phi(x^{k}) \geq a(y^k)^\top x^k > \epsilon \geq 0$. Inequality~\eqref{eq:oraclebound} yields
$\phi(x^k)- G(x^k,y^k) \leq \delta \phi(x^k)$, i.e., $(1-\delta)\phi(x^k)  \leq  G(x^k,y^k) = (x^k)^\top a(y^k) = \nabla \theta(z^k)^\top \xi^k$. Therefore, as we have $\tau > 0$, we deduce from Eq.~\eqref{eq:control} that \vspace*{-0.4em}
\begin{align}
     \frac{1-\delta}{\tau}\Delta_k \leq  \nabla \theta(z^k)^\top \xi^k.
    \label{eq:control2}
\end{align}     \vspace*{-5mm}

Combining Eqs.~\eqref{eq:gamma} and \eqref{eq:control2}, we obtain $
 \Delta_{k+1} \leq \Delta_{k} - \gamma \frac{1-\delta}{\tau} \Delta_{k}  + \frac{R^2}{2 \mu} \gamma^2,$
for every $\gamma \geq 0$. Factoring and setting $\tilde{\gamma} = \gamma \frac{1-\delta}{\tau}$ (for every $\tilde{\gamma} \geq 0$) yields:\vspace{-0.8em}
\begin{equation}
     \Delta_{k+1} \leq (1 - \tilde{\gamma}) \Delta_{k}   + \frac{R^2 \tau^2}{2 \mu (1-\delta)^2} \tilde{\gamma}^2.
     \label{eq:progress}
\end{equation}
 Applying Eq.~\eqref{eq:progress} with $\tilde{\gamma} = \frac{2}{k+2}$, and defining $C = \frac{ 2 R^2 \tau^2}{ \mu (1-\delta)^2}$ we obtain:
{\small   \begin{align*}
     \Delta_{k+1} \leq (1 - \frac{2}{k+2}) \Delta_{k}   + \frac{C}{(k+2)^2}
     \leq \frac{k}{k+2} \frac{C}{k+2} + \frac{C}{(k+2)^2},
\end{align*}}
with the second inequality coming from the application of \eqref{eq:optimality_gap}, which holds for $k$ by the induction hypothesis. Finally, we deduce that
{\small  \begin{align*}
     \Delta_{k+1} \leq \frac{C}{k+2} (\frac{k}{k+2}  + \frac{1}{k+2}) \leq \frac{C}{k+2} \frac{k+1}{k+2} \leq \frac{C}{k+2} \frac{k+2}{k+3} = \frac{C}{k+3},
\end{align*}}
where the third inequality follows from the observation that $\frac{k+1}{k+2} \leq \frac{k+2}{k+3}$. Hence, the property \eqref{eq:optimality_gap} is true for $k+1$ as well. This concludes the proof. \end{proof}

The main difference between Eq.~\eqref{eq:optimality_gap} and the convergence rate of the CP algorithm with exact oracle considered in \cite{cerulli2022} is exactly the term $\frac{1}{(1-\delta)^2}$, which is related to the inexactness of the oracle here considered. 
Furthermore, the convergence result of Theorem~\ref{th:convrate} differs from \cite[Th.~2]{locatello2017greedy}, where the existence of a scalar $t \geq 0$ such that $\{z^*\} \cup \{ z^k\}_{k \in\mathbb{N}} \subset t\: \mathsf{conv}(\mathcal{M})$ is assumed. Indeed, the present analysis only uses the property $z^* \in \tau \: \mathsf{conv}(\mathcal{M})$, regardless of whether the iterates $(z^k)_{k \in \mathbb{N}}$ also belong to this set or not. Therefore the objective error estimate in Eq.~\eqref{eq:optimality_gap} is independent of the particular sequence $(z^k)_{k \in \mathbb{N}}$ resulting from the choice of the sets $\mathcal{B}^k$ at Step~\ref{step:management} of Alg.~\ref{alg:CP} (constraint management strategy).

The following theorem states that smallest feasibility error among the $k$ first iterates $x_1, \dots, x_k$ follows a $O(\frac{1}{k})$ convergence rate.
\begin{theorem}\label{th:CP}
    Under Assumptions~\ref{as:convex}--\ref{as:slater}, if Alg.~\ref{alg:CP} executes iteration $k$, for $k \geq 2$, then  \vspace*{-3mm}
    \begin{align}\label{eq:th5}
       \min_{0\leq \ell \leq  k } \phi(x^\ell) \leq \frac{27 \:  R^2 \tau}{4 \mu (1-\delta)^2} \: \frac{1}{k+2}.
    \end{align}
\end{theorem}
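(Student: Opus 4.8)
The plan is to recognise $\phi(x^\ell)$ as (a multiple of) the Frank--Wolfe duality gap of the dual iterate $z^\ell$, and to exploit the classical fact that, although this gap need not decrease monotonically, its \emph{running minimum} inherits the $O(1/k)$ rate from the objective-gap bound of Theorem~\ref{th:convrate}. First I would dispose of the trivial situations: if $\phi(x^\ell)\le 0$ for some $\ell\le k$, then $\min_{0\le\ell\le k}\phi(x^\ell)\le 0$ and \eqref{eq:th5} holds because its right-hand side is nonnegative; moreover, when $\tau=0$ one checks (as in the $\tau=0$ discussion of Theorem~\ref{th:convrate}, where $x^0=x^*$ is the feasible unconstrained minimiser of $F$ over $\mathcal{X}$) that $\phi(x^0)\le 0$, so this case is already covered. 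Hence I may assume $\tau>0$ and $\phi(x^\ell)>0$ for every $\ell\in\{0,\dots,k\}$.

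The engine of the proof is the per-iteration inequality already established inside the proof of Theorem~\ref{th:convrate}. For every $\ell\le k-1$ (so that $z^{\ell+1}$ is defined) and every $\gamma\ge 0$, inequality~\eqref{eq:gamma} reads $\Delta_{\ell+1}\le\Delta_\ell-\gamma\,\nabla\theta(z^\ell)^\top\xi^\ell+\frac{R^2}{2\mu}\gamma^2$. Since $\phi(x^\ell)>0$, the oracle bound~\eqref{eq:oraclebound} gives $\nabla\theta(z^\ell)^\top\xi^\ell=G(x^\ell,y^\ell)\ge(1-\delta)\phi(x^\ell)$, whence
\[
\gamma(1-\delta)\phi(x^\ell)\le\Delta_\ell-\Delta_{\ell+1}+\frac{R^2}{2\mu}\gamma^2,\qquad\forall\,\gamma\ge 0.
\]

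Next I would sum this over a window $\ell\in\{j,\dots,k-1\}$, for an index $j$ to be fixed later. The $\Delta$-terms telescope, and discarding $-\Delta_k\le 0$ leaves $\gamma(1-\delta)\sum_{\ell=j}^{k-1}\phi(x^\ell)\le\Delta_j+(k-j)\frac{R^2}{2\mu}\gamma^2$. Bounding each summand below by $m:=\min_{0\le\ell\le k}\phi(x^\ell)$ and dividing by $k-j$ gives $\gamma(1-\delta)m\le\frac{\Delta_j}{k-j}+\frac{R^2}{2\mu}\gamma^2$. Because this holds for every $\gamma\ge 0$, I minimise the right-hand side over $\gamma$ (optimal $\gamma=\sqrt{2\mu\Delta_j/(R^2(k-j))}$), obtaining $(1-\delta)m\le R\sqrt{2\Delta_j/(\mu(k-j))}$. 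Inserting the objective-gap estimate $\Delta_j\le\frac{2R^2\tau^2}{\mu(1-\delta)^2}\frac{1}{j+2}$ from Theorem~\ref{th:convrate} collapses this to $m\le\frac{2R^2\tau}{\mu(1-\delta)^2}\frac{1}{\sqrt{(k-j)(j+2)}}$.

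It remains to choose the window. Since $(j+2)+(k-j)=k+2$, the product $(k-j)(j+2)$ is maximised for $j\approx k/2$ and stays $\Theta(k^2)$ there, which is what makes the final bound $O(1/k)$ rather than the $O(1/\sqrt k)$ one would get from $\Delta_\ell$ alone. Concretely, taking $j=\lfloor k/2\rfloor$ and checking the elementary inequality $(k-j)(j+2)\ge\bigl(\tfrac{8}{27}\bigr)^2(k+2)^2$ for all $k\ge 2$ yields $\sqrt{(k-j)(j+2)}\ge\frac{8}{27}(k+2)$ and hence the stated constant, since $\frac{2R^2\tau}{\mu(1-\delta)^2}\cdot\frac{27}{8(k+2)}=\frac{27R^2\tau}{4\mu(1-\delta)^2(k+2)}$. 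The main obstacle is conceptual rather than computational: one must realise that the feasibility error behaves like a Frank--Wolfe gap, so that a direct per-iterate estimate is too weak and the $O(1/k)$ rate for the \emph{smallest} error is recovered only by averaging over a window on which the cumulative decrease $\Delta_j-\Delta_k$ is controlled; the remaining care lies in selecting the window and absorbing integer rounding into the uniform constant $27/4$.
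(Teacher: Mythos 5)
Your proof is correct, and while it uses the same three ingredients as the paper's proof---the per-iteration inequality \eqref{eq:gamma}, the oracle bound \eqref{eq:oraclebound} to convert $\phi(x^\ell)>0$ into $(1-\delta)\phi(x^\ell)\leq \nabla\theta(z^\ell)^\top\xi^\ell$, and the objective-gap rate of Theorem~\ref{th:convrate} applied at the start of a window placed a constant fraction into the iteration count---the execution is genuinely different. The paper argues by contradiction: it assumes $\phi(x^\ell)>\tfrac{\beta C}{D}$ for all $\ell\leq k$, plugs the iteration-dependent step size $\gamma=\tfrac{2\tau}{(1-\delta)(\ell+2)}$ into \eqref{eq:gamma}, sums over the window $\{\lceil\tfrac{2}{3}(k+2)\rceil-2,\dots,k\}$, and derives $\Delta_{k+1}<0$; because that sum runs up to $\ell=k$, the paper must artificially define $z^{k+1}$ (as the exact maximizer along the ray $z^k+\gamma\xi^k$) when the algorithm halts at iteration $k$. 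You instead argue directly: telescope \eqref{eq:gamma} with a single fixed $\gamma$ over $\{\lfloor k/2\rfloor,\dots,k-1\}$, bound the averaged feasibility errors below by their running minimum, and then optimize over $\gamma$ (an AM--GM step giving $(1-\delta)m\leq R\sqrt{2\Delta_j/(\mu(k-j))}$), which both sidesteps the artificial iterate $z^{k+1}$ entirely---your sum needs $z^{\ell+1}$ only for $\ell+1\leq k$, all of which exist by hypothesis---and makes transparent why the running minimum gains a full factor $1/k$ over the $1/\sqrt{k}$ one would get from a single iterate: the product $(k-j)(j+2)$ is of order $(k+2)^2$ when $j\approx k/2$. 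Your window-position and rounding estimates check out ($(k-j)(j+2)\geq\tfrac{64}{729}(k+2)^2$ holds for $j=\lfloor k/2\rfloor$ and all $k\geq 2$, with room to spare), and both routes land on the same constant $\tfrac{27}{4}$; the handling of the degenerate cases ($\tau=0$ via uniqueness of the strongly convex minimizer, or some $\phi(x^\ell)\leq 0$ making the claim trivial) is also sound.
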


\begin{proof}
If $\tau =0,$ inequality~\eqref{eq:th5} holds trivially because, from Theorem~\ref{th:convrate}, $F(x^k) = \mathsf{val}\text{\eqref{eq:lsip}}$, thus $\phi(x^k) \leq 0.$ We consider the case $\tau >0$.
We keep the definition of the constant $C = \frac{ 2 R^2 \tau^2}{ \mu \: (1-\delta)^2}$, and we define the constants $\alpha = \frac{2}{3}$, $\beta = \frac{27}{8\tau}$, and $D = k +2$. Let us suppose that \vspace*{-1.2em}
\begin{align}
  \phi(x^\ell) > \frac{\beta C}{D}, \forall  \ell \in \{0, \dots k\}.
  \label{eq:assumptioninfeas}
\end{align}
We will show a contradiction. If iteration $k+1$ is not executed because the algorithm stopped at iteration $k$, we still define $z^{k+1} = \underset{z = z^k + \gamma \xi^k , \gamma \geq 0 }{\text{argmax}} \theta(z)$, and $\Delta_{k+1} = \theta(z^*) - \theta(z^{k+1}) \geq 0$. Hence, regardless whether the iteration $k+1$ is executed or not, $\Delta_{\ell}$ and $\Delta_{\ell+1}$ are well defined for all $\ell \in \lbrace 0, \dots, k \rbrace$, and  we can apply Eq.~\eqref{eq:gamma} to deduce  $\Delta_{\ell+1} \leq \Delta_{\ell} - \gamma \: \nabla \theta (z^{\ell})^\top \xi^{\ell}   + \frac{R^2}{2\mu} \gamma^2$, for every $\gamma \geq 0$. Eq.~\eqref{eq:assumptioninfeas} implies that $\phi(x^\ell) >0$, and from Eq.~\eqref{eq:oraclebound}, we deduce that  $\phi(x^\ell)- G(x^\ell,y^\ell) \leq \delta \phi(x^\ell)$, i.e., $(1-\delta)\phi(x^\ell)  \leq  G(x^\ell,y^\ell) = (x^\ell)^\top a(y^\ell) = \nabla \theta(z^\ell)^\top \xi^\ell$. Combining this with Eq.~\eqref{eq:assumptioninfeas}, we deduce that $\nabla \theta(z^\ell)^\top \xi^\ell > \frac{(1 -\delta)\beta C}{D}$, and therefore, for every $\gamma  >  0$, $\Delta_{\ell+1} < \Delta_{\ell} - \gamma \: \frac{(1 -\delta)\beta C}{D}   + \frac{R^2}{2\mu} \gamma^2$. Applying this inequality for $\gamma = \frac{2\tau}{(1-\delta)(\ell+2)} > 0$, we obtain
{\small\begin{align}
    \Delta_{\ell+1} < \Delta_{\ell} - & \frac{2 \tau \beta C}{(\ell +2) D}   + \frac{2 R^2 \tau^2}{\mu (1-\delta)^2} \frac{1}{(\ell+2)^2} = \Delta_{\ell} - \frac{2 \tau \beta C}{(\ell +2) D}   +  \frac{C}{(\ell+2)^2}. \label{eq:ineqC}
\end{align}}
We define $k_{\min} = \lceil \alpha D \rceil - 2$, and we notice that $k_{\min} \geq 0$, since $D\geq 4$. Furthermore, for every $\ell \in \{ k_{\min}, \dots k \}$, $\alpha D \leq \ell + 2 \leq D$. Combining this with Eq.~\eqref{eq:ineqC}, we know that, for every $\ell \in \{ k_{\min}, \dots k \}$,\vspace*{-0.2em}
{\small\begin{align}
    \Delta_{\ell+1} < \Delta_{\ell} - \frac{2 \tau \beta C}{D^2}   +  \frac{C}{\alpha^2 D^2} = \Delta_{\ell} + \frac{C}{D^2}(\frac{1}{\alpha^2}- 2\tau \beta).
\end{align}}
Summing these inequalities for $\ell  \in \{ k_{\min}, \dots k \}$, we obtain 
{\small\begin{align}
    \Delta_{k+1} < \Delta_{k_{\min}} +  \frac{C (k + 1 - k_{\min} )}{D^2}(\frac{1}{\alpha^2} - 2\tau \beta),
\end{align}}
and, using the bound on the objective gap at iteration $k_{\min}$ given by Theorem~\ref{th:convrate}, we have $\Delta_{k+1} < \frac{C}{k_{\min} + 2} + \frac{C (k +1 - k_{\min} )}{D^2}(\frac{1}{\alpha^2} - 2\tau \beta )$.
We notice that $k_{\min} + 2 \geq \alpha D$, and $ k + 1 - k_{\min} \geq (1-\alpha) D$. By definition of $\alpha$ and $\beta$, $\frac{1}{\alpha^2} - 2\tau \beta = \frac{9}{4}- \frac{27}{4} \leq 0$, and thus
{\small\begin{align}
    \Delta_{k+1} < \frac{C}{\alpha D} + \frac{C (1-\alpha)}{D}(\frac{1}{\alpha^2} - 2\tau \beta ) = \frac{C}{\alpha D}( 1 + \frac{1-\alpha}{\alpha} - 2\alpha(1-\alpha) \tau \beta).
\end{align}}
Using again that $\alpha = \frac{2}{3}$, we deduce that $\Delta_{k+1} < \frac{C}{\alpha D}(\frac{3}{2} - \frac{4}{9} \tau \beta)$. Since $\beta = \frac{27}{8\tau}$, we have $(\frac{3}{2} - \frac{4}{9} \tau \beta) = 0$. Therefore, we obtain $\Delta_{k+1}  < 0$, which contradicts the definition of $\Delta_{k+1}$. We can conclude that the assumption at Eq.~\eqref{eq:assumptioninfeas} cannot hold, and there exists $\ell \in \{0, \dots, k\}$ such that $\phi(x^\ell) \leq \frac{\beta C}{D} = \frac{27 \:  R^2 \tau}{4 \mu (1-\delta)^2} \: \frac{1}{k+2}$.
\end{proof}

    The proof of Theorem~\ref{th:CP} is inspired by a previous work on the FCFW algorithm \cite{jaggi2013revisiting}, with some adaptations to the problem~\eqref{eq:dualsip}, a convex optimization problem on a cone. Indeed, \cite[Th.~2]{jaggi2013revisiting} considers the duality gap convergence for the FCFW algorithm in the case of an {optimization problem over a} convex and compact {domain} for which a linear minimization oracle \cite{jaggi2013revisiting} is available. Yet, in the case of problem~\eqref{eq:dualsip}, the optimization set $\mathsf{cone}(\mathcal{M})$ is not compact, and we do not assume, as done in \cite{locatello2017greedy}, that the iterates $(z^k)_{k\in \mathbb{N}}$ belong to a same compact set of the form $t \: \mathsf{conv}(\mathcal{M})$. 
    Therefore, we propose an adapted result and proof for such a case, which only rely on the property $z^* \in \tau \: \mathsf{conv}(\mathcal{M})$, regardless of whether the iterates $(z^k)_{k\in \mathbb{N}}$ also belong to this set.

\section{IOA algorithm with inexact oracle}

The CP algorithm provides iterates that are not feasible for the problem~\eqref{eq:SIP} until the algorithm converges; feasibility is obtained only asymptotically. To overcome this limitation, the authors of \cite{cerulli2022} proposed an IOA algorithm that generates a minimizing sequence of points that are feasible in \eqref{eq:SIP}, in the case where the inner problem is a QP problem, solved through an exact separation oracle. We extend this algorithm to the case where the inner problem is a QCQP problem, and the separation oracle is inexact. In particular, we consider the setting defined in the following assumption.

\begin{assumption} The parameterization of the SIP constraints satisfies the following:
    \begin{itemize}
    \item  Linear mappings $x \mapsto Q(x) \in \mathbb{S}_n$, $x \mapsto q(x) \in \mathbb{R}^n$ and $x \mapsto b(x) \in \mathbb{R}$ exist such that $$G(x,y) =  - \frac{1}{2} y^\top Q(x) y + q(x)^\top y + b(x).$$
    \item There exist $Q^1, \dots, Q^r \in \mathbb{S}_n$, $q^1, \dots, q^r \in \mathbb{R}^n$ and $b_1, \dots, b_r \in \mathbb{R}$ such that $$\mathcal{Y} = \left \lbrace y \in \mathbb{R}^n \colon \frac{1}{2} y^\top Q^j y +  (q^j) ^\top y + b_j \leq 0, \forall j \in \{1, \dots, r \} \right \rbrace.$$ 
    \vspace*{-5mm}
    
    \noindent We also assume to know $\rho \geq 0$ such that $\mathcal{Y} \subset B(0, \rho)$.
   \end{itemize}   \label{as:subpbqcqp}
\end{assumption}

In the context of Assumption~\ref{as:subpbqcqp}, a possible way to deal with the SIP problem \eqref{eq:SIP} is what is called \textit{lower-level dualization approach} in \cite{cerulli2022}, which consists in replacing the constraint involving the QCQP inner problem with one involving its dual. In particular, we consider a strong dual of a Semidefinite Programming (SDP) relaxation of the inner problem (or a reformulation if the latter is convex). In Subsection~\ref{subsec:sdprelax}, we introduce the classical SDP relaxation of the inner problem (reformulation, if it is convex) regularized by a ball constraint, and then, we introduce the SDP dual of this relaxation (reformulation, resp.). In Subsection~\ref{subsec:restr_reform} we present the finite formulation~\eqref{eq:SIP:convexreform}, obtained by applying the \textit{lower-level dualization approach} to the problem~\eqref{eq:SIP}. This formulation is a reformulation of \eqref{eq:SIP} if $Q^1, \dots, Q^r$ are Positive Semidefinite (PSD) and $Q(x)$ is PSD for every $x \in \mathcal{X}$. Otherwise, an a posteriori sufficient condition on a computed optimal solution $\bar{x}$ of \eqref{eq:SIP:convexreform} introduced in Subsection~\ref{sec:sufficientcondition} can be verified. If $\bar{x}$ satisfies such a condition, one can state that it is an optimal solution of \eqref{eq:SIP}. If not, the IOA algorithm is proposed in Subsection~\ref{sec:IO}, which generates a sequence of converging feasible points of \eqref{eq:SIP:convexreform}.
\subsection{SDP relaxation/reformulation of the inner problem} \label{subsec:sdprelax}

In this section, we reason for any fixed value of the decision vector $x \in \mathcal{X}$. The corresponding inner problem $\max_{y \in \mathcal{Y}} G(x,y)$ is the following QCQP problem

\begin{equation}\label{eq:QCQP}
    \left\{ \begin{array}{cll}
         \max \limits_{y \in \mathbb{R}^n} &  -\frac{1}{2} y^\top Q(x) y + q(x)^\top y +b(x) & \\
         \text{s.t.} & \frac{1}{2} y^\top Q^j y +  (q^j) ^\top y + b_j \leq 0 & \forall j \in \{ 1, \dots, r \}.
    \end{array}     \tag{\mbox{$\mathsf{P}_x$}} \right.
\end{equation} 
We define the linear matrix operator $\mathbf{\mathcal{Q}}(x) = \frac{1}{2} \begin{pmatrix} -Q(x) & q(x) \\ q(x)^\top & 2 b(x) \end{pmatrix} \in \mathbb{S}_{n+1}$, and the matrices $\mathbf{\mathcal{Q}}^j = \frac{1}{2} \begin{pmatrix} Q^j & q^j \\ (q^j)^\top & 2 b_j \end{pmatrix} \in \mathbb{S}_{n+1}$, for $j \in \{1, \dots, r\}$, the identity matrix $I_{n+1} \in  \mathbb{S}_{n+1}$, as well as $E := \mathsf{diag}(0, \dots, 0,1) \in \mathbb{S}_{n+1}$. 
With this notation, we introduce the following SDP problem
\begin{equation}\label{eq:SDP_relax}
  \left\{  \begin{array}{cll}
         \max \limits_{Y \in \mathbb{S}_{n+1}} & \langle  \mathbf{\mathcal{Q}}(x), Y \rangle  & \\
         \text{s.t.} & \langle \mathbf{\mathcal{Q}}^j, Y \rangle  \leq  0 &  \forall j \in \{ 1, \dots, r \} \\
         & \langle I_{n+1}, Y \rangle \leq  1+\rho^2 & \\
         & \langle E, Y \rangle = 1 & \\
         & Y \succeq  0. &
    \end{array} \tag{\mbox{$\mathsf{SDP}_x$}} \right.
\end{equation}
Under Assumption~\ref{as:subpbqcqp}, this problem is the standard Shor SDP relaxation of \eqref{eq:QCQP}. This convex relaxation is obtained by first reformulating~\eqref{eq:QCQP} in a lifted space with an additional rank constraint and then dropping such a constraint. If the inner problem~\eqref{eq:QCQP} is convex, i.e., $Q^1, \dots, Q^r$ are PSD and $Q(x)$ is PSD for every $x \in \mathcal{X}$, both problems have the same optimal objective value. These results are proven in the following lemma.
\begin{lemma}\label{lem:qcqpsdp}
    Under Assumption~\ref{as:subpbqcqp}, $\mathsf{val} \text{\eqref{eq:SDP_relax}} \geq \mathsf{val} \text{\eqref{eq:QCQP}}$. If, moreover, $Q(x), Q^1, \dots, Q^r$ are PSD, then $\mathsf{val} \text{\eqref{eq:SDP_relax}}=\mathsf{val} \text{\eqref{eq:QCQP}}$. \label{lem:qcqpsdp}
\end{lemma}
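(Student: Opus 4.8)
The plan is to prove the two inequalities $\mathsf{val}\,\text{\eqref{eq:SDP_relax}} \geq \mathsf{val}\,\text{\eqref{eq:QCQP}}$ and, under the additional PSD hypotheses, $\mathsf{val}\,\text{\eqref{eq:SDP_relax}} \leq \mathsf{val}\,\text{\eqref{eq:QCQP}}$. The single device behind both directions is the correspondence between a point $y \in \mathbb{R}^n$ and the rank-one lift $\hat{Y} = \begin{pmatrix} y \\ 1 \end{pmatrix}\begin{pmatrix} y^\top & 1 \end{pmatrix} = \begin{pmatrix} yy^\top & y \\ y^\top & 1 \end{pmatrix}$, together with the block decomposition of a generic feasible $Y \succeq 0$. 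First I would record the two identities $\langle \mathbf{\mathcal{Q}}(x), \hat{Y} \rangle = -\tfrac{1}{2} y^\top Q(x) y + q(x)^\top y + b(x) = G(x,y)$ and $\langle \mathbf{\mathcal{Q}}^j, \hat{Y} \rangle = \tfrac{1}{2} y^\top Q^j y + (q^j)^\top y + b_j$, both immediate from expanding the Frobenius products block by block (the off-diagonal blocks contribute the factor of $2$ that cancels the $\tfrac{1}{2}$ in the definitions of $\mathbf{\mathcal{Q}}(x)$ and $\mathbf{\mathcal{Q}}^j$).

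For the relaxation direction, I would take any $y$ feasible for \eqref{eq:QCQP} (if there is none, the inequality is vacuous since $\mathsf{val}\,\text{\eqref{eq:QCQP}} = -\infty$) and verify that $\hat{Y}$ is feasible for \eqref{eq:SDP_relax}: the constraints $\langle \mathbf{\mathcal{Q}}^j, \hat{Y}\rangle \leq 0$ are exactly the quadratic constraints of \eqref{eq:QCQP}; the equality $\langle E, \hat{Y}\rangle = 1$ holds by construction; the trace constraint reads $\langle I_{n+1}, \hat{Y}\rangle = \lVert y \rVert^2 + 1 \leq 1 + \rho^2$ and follows from $y \in \mathcal{Y} \subset B(0,\rho)$; and $\hat{Y} \succeq 0$ as a rank-one PSD matrix. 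Since the objective values match, $\mathsf{val}\,\text{\eqref{eq:SDP_relax}} \geq G(x,y)$ for every feasible $y$, hence $\mathsf{val}\,\text{\eqref{eq:SDP_relax}} \geq \mathsf{val}\,\text{\eqref{eq:QCQP}}$.

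For the tightness direction under the PSD assumptions, I would take any $Y$ feasible for \eqref{eq:SDP_relax} and write it in blocks as $Y = \begin{pmatrix} \bar{Y} & \bar{y} \\ \bar{y}^\top & 1 \end{pmatrix}$, where the bottom-right entry is $1$ by the constraint $\langle E, Y\rangle = 1$. Because $Y \succeq 0$ with an invertible (scalar) lower-right block, the Schur complement gives $\bar{Y} - \bar{y}\bar{y}^\top \succeq 0$. I would then propose $\bar{y}$ as candidate and show it is feasible for \eqref{eq:QCQP} with objective at least $\langle \mathbf{\mathcal{Q}}(x), Y\rangle$. Splitting $\langle Q^j, \bar{Y}\rangle = \bar{y}^\top Q^j \bar{y} + \langle Q^j, \bar{Y} - \bar{y}\bar{y}^\top\rangle$ and using $Q^j \succeq 0$ with $\bar{Y} - \bar{y}\bar{y}^\top \succeq 0$, the cross term is nonnegative, so $\tfrac{1}{2}\bar{y}^\top Q^j \bar{y} + (q^j)^\top \bar{y} + b_j \leq \langle \mathbf{\mathcal{Q}}^j, Y\rangle \leq 0$; thus $\bar{y} \in \mathcal{Y}$ (and automatically lies in $B(0,\rho)$). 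The same splitting applied to $Q(x) \succeq 0$ gives $\langle Q(x), \bar{Y}\rangle \geq \bar{y}^\top Q(x)\bar{y}$, whence $\langle \mathbf{\mathcal{Q}}(x), Y\rangle = -\tfrac{1}{2}\langle Q(x), \bar{Y}\rangle + q(x)^\top \bar{y} + b(x) \leq G(x,\bar{y}) \leq \mathsf{val}\,\text{\eqref{eq:QCQP}}$. Taking the supremum over feasible $Y$ yields $\mathsf{val}\,\text{\eqref{eq:SDP_relax}} \leq \mathsf{val}\,\text{\eqref{eq:QCQP}}$, and combined with the first part the equality follows.

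I expect the main (and essentially only) obstacle to be the sign bookkeeping in the convex case: the argument works precisely because the ``rounding gap'' terms $\langle Q^j, \bar{Y} - \bar{y}\bar{y}^\top\rangle$ and $\langle Q(x), \bar{Y} - \bar{y}\bar{y}^\top\rangle$ are forced to be nonnegative by the PSD hypotheses, so that passing from $Y$ to $\bar{y}$ can only relax the constraints and improve the objective. Without convexity these inner products may carry either sign, which is exactly why only the relaxation inequality survives in general. A minor point worth stating carefully is that I argue via the supremum over feasible $Y$ rather than via an SDP optimizer, so no attainment argument is needed, even though the feasible set of \eqref{eq:SDP_relax} is in fact compact thanks to the trace constraint.
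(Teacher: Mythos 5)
Your proof is correct, and in the tightness direction it takes a genuinely different route from the paper. For $\mathsf{val}\,\text{\eqref{eq:SDP_relax}} \geq \mathsf{val}\,\text{\eqref{eq:QCQP}}$ both arguments amount to the same thing (the paper simply invokes the ``standard Shor relaxation'' of the problem augmented with the redundant ball constraint, whereas you verify the rank-one lift explicitly --- a harmless difference; note also that $\mathcal{Y}$ is assumed non-empty, so your vacuous case never arises). For the reverse inequality under the PSD hypotheses, the paper proceeds by eigendecomposition: it writes $Y = \sum_i v_i u_i u_i^\top$, splits the eigenvectors according to whether their last coordinate vanishes, normalizes the ones with $(u_i)_{n+1} \neq 0$ into lifted points $\begin{pmatrix} y_i \\ 1 \end{pmatrix}$ with weights $\mu_i$ summing to $1$, and then uses $Q(x), Q^j \succeq 0$ twice --- once to discard the contribution of the ``horizontal'' eigenvectors $z_i$, and once via Jensen's inequality (concavity of $G(x,\cdot)$, convexity of the constraint quadratics) to pass from $\sum_i \mu_i y_i$ to the candidate point $\bar y$. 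You instead read $\bar y$ off the last column of $Y$ directly, invoke the Schur complement to get $\bar Y - \bar y \bar y^\top \succeq 0$, and dispose of all the gap terms in one stroke via $\langle A, B\rangle \geq 0$ for PSD $A, B$. The two proofs round $Y$ to the \emph{same} point (the paper's $\sum_{i \in I} \mu_i y_i$ is exactly the off-diagonal block $\bar y$), so the difference is purely in how the verification is organized: your version is shorter and avoids both the index-set bookkeeping and the appeal to Jensen, at the price of citing the Schur complement characterization of positive semidefiniteness; the paper's version is more self-contained and makes visible the decomposition of $Y$ into lifted feasible points plus recession directions, which is the structural reason the relaxation is tight in the convex case.
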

\begin{proof}
    Proof in Appendix~\ref{app:qcqpsdp}.
\end{proof}

The following SDP problem
\begin{equation}
   \left\{ \begin{array}{cl}
         \min \limits_{\lambda, \alpha, \beta} &  \alpha (1+\rho^2) + \beta\\
         \text{s.t.} &  \sum\limits_{j=1}^r  \lambda_j \mathbf{\mathcal{Q}}^j  + \alpha I_{n+1} + \beta E \succeq \mathbf{\mathcal{Q}}(x) \\
         & \lambda \in \mathbb{R}^r_+, \: \alpha \in \mathbb{R}_+, \beta \in \mathbb{R},
    \end{array}
    \tag{\mbox{$\mathsf{DSDP}_x$}} \right.
    \label{eq:DSDP}
\end{equation}
is the dual of problem \eqref{eq:SDP_relax}, as the following lemma states. 
\begin{lemma}\label{lem:strong_duality}
Formulations \eqref{eq:SDP_relax} and \eqref{eq:DSDP} are a primal-dual pair of SDP problems and strong duality holds, thus $\mathsf{val}\text{\eqref{eq:SDP_relax}} = \mathsf{val}\text{\eqref{eq:DSDP}}.$
\end{lemma}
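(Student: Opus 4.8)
The plan is to exhibit \eqref{eq:DSDP} as the Lagrangian dual of \eqref{eq:SDP_relax} and then to close the duality gap through a Slater argument applied to the dual. First I would form the Lagrangian of \eqref{eq:SDP_relax}, dualizing the $r$ inequalities $\langle \mathbf{\mathcal{Q}}^j, Y\rangle \le 0$ with multipliers $\lambda_j \ge 0$, the trace inequality $\langle I_{n+1},Y\rangle \le 1+\rho^2$ with $\alpha \ge 0$, and the equation $\langle E, Y\rangle = 1$ with a free multiplier $\beta$, while keeping the conic constraint $Y \succeq 0$ untouched. Collecting the terms linear in $Y$ gives
\begin{equation*}
\mathcal{L}(Y,\lambda,\alpha,\beta) = \Bigl\langle \mathbf{\mathcal{Q}}(x) - \textstyle\sum_{j=1}^r \lambda_j \mathbf{\mathcal{Q}}^j - \alpha I_{n+1} - \beta E,\; Y\Bigr\rangle + \alpha(1+\rho^2) + \beta.
\end{equation*}
The dual function $\sup_{Y \succeq 0}\mathcal{L}(Y,\lambda,\alpha,\beta)$ is finite if and only if the matrix multiplying $Y$ is negative semidefinite, i.e.\ $\sum_{j=1}^r \lambda_j \mathbf{\mathcal{Q}}^j + \alpha I_{n+1} + \beta E \succeq \mathbf{\mathcal{Q}}(x)$, in which case the supremum is attained at $Y=0$ and equals $\alpha(1+\rho^2)+\beta$. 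Minimizing over the dual variables recovers exactly \eqref{eq:DSDP}, which establishes that the two problems form a primal--dual pair and yields weak duality $\mathsf{val}\text{\eqref{eq:SDP_relax}} \le \mathsf{val}\text{\eqref{eq:DSDP}}$.

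The remaining work is to prove the reverse inequality. I would do this by verifying the Slater condition for \eqref{eq:DSDP} rather than for \eqref{eq:SDP_relax}: the primal feasible set is the rank-one lift of any $y \in \mathcal{Y}$ sitting on the boundary of the PSD cone, so primal strict feasibility is not transparent, whereas a strictly feasible dual point is easy to write down. Indeed, take $\lambda = (1,\dots,1)$, $\beta = 0$, and any $\alpha$ larger than the largest eigenvalue of $\mathbf{\mathcal{Q}}(x) - \sum_{j=1}^r \mathbf{\mathcal{Q}}^j$; then $\sum_{j=1}^r \mathbf{\mathcal{Q}}^j + \alpha I_{n+1} - \mathbf{\mathcal{Q}}(x) = \alpha I_{n+1} - \bigl(\mathbf{\mathcal{Q}}(x) - \sum_{j=1}^r \mathbf{\mathcal{Q}}^j\bigr) \succ 0$, with $\lambda$ in the interior of $\mathbb{R}^r_+$ and $\alpha>0$. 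This is a Slater point for \eqref{eq:DSDP}. Together with the fact that \eqref{eq:SDP_relax} is feasible (the lift $Y = \binom{y}{1}\binom{y}{1}^\top$ of any $y \in \mathcal{Y}$ satisfies all its constraints, using $y \in B(0,\rho)$ for the trace bound) and bounded above (its feasible set is compact, being contained in $\{Y \succeq 0 \colon \langle I_{n+1},Y\rangle \le 1+\rho^2\}$), the conic strong duality theorem gives a zero duality gap, hence $\mathsf{val}\text{\eqref{eq:SDP_relax}} = \mathsf{val}\text{\eqref{eq:DSDP}}$, with the maximum in \eqref{eq:SDP_relax} attained.

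The main obstacle is not the computation but the bookkeeping of the duality convention: one must be careful to apply strict feasibility to the correct member of the pair, since the PSD constraint in \eqref{eq:DSDP} is what admits an interior point, while \eqref{eq:SDP_relax} need not be strictly feasible when $\mathcal{Y}$ has empty interior. I would therefore phrase the argument so that Slater is invoked on \eqref{eq:DSDP} and the resulting strong-duality/attainment conclusion is transferred to \eqref{eq:SDP_relax}, and I would cite a precise statement of the conic duality theorem to justify that strict dual feasibility plus primal boundedness eliminates the gap.
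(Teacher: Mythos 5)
Your proposal takes essentially the same route as the paper's proof: form the Lagrangian of \eqref{eq:SDP_relax} by dualizing the linear constraints while keeping $Y \succeq 0$, observe that the dual function is finite exactly when $\sum_{j=1}^r \lambda_j \mathbf{\mathcal{Q}}^j + \alpha I_{n+1} + \beta E \succeq \mathbf{\mathcal{Q}}(x)$ (recovering \eqref{eq:DSDP}), and then invoke Slater's condition on the dual side with the same strictly feasible point the paper uses, namely $\lambda=(1,\dots,1)$, $\beta=0$, and $\alpha$ exceeding both $0$ and the largest eigenvalue of $\mathbf{\mathcal{Q}}(x)-\sum_{j=1}^r \mathbf{\mathcal{Q}}^j$. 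The argument is correct; your extra observations about primal feasibility and boundedness are not needed for the paper's citation of Slater-based strong duality but do no harm.
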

\begin{proof}
Proof in Appendix~\ref{app:strong_duality}.
\end{proof}

\subsection{SDP restriction/reformulation of the SIP problem}\label{subsec:restr_reform}

Leveraging Section~\ref{subsec:sdprelax}, which focuses on the inner problem~\eqref{eq:QCQP}, its SDP relaxation~\eqref{eq:SDP_relax} and the respective dual problem~\eqref{eq:DSDP}, we propose a single-level finite restriction of problem~\eqref{eq:SIP}. It is a reformulation of \eqref{eq:SIP} if $Q^1, \dots, Q^r$ are PSD, and $Q(x)$ is PSD for every $x \in \mathcal{X}$.

\begin{theorem}\label{th:finiteformulation}
Under Assumption~\ref{as:subpbqcqp}, the finite formulation
{\small\begin{align}
\left\{ \begin{array}{rl}
  \min\limits_{x,\lambda,\alpha,\beta} & F(x)   \\
  \text{s.t.} &   \alpha (1 + \rho^2) + \beta \leq 0  \\
  & \sum\limits_{j=1}^r  \lambda_j \mathbf{\mathcal{Q}}^j  + \alpha I_{n+1} + \beta E - \mathbf{\mathcal{Q}}(x) \succeq 0\\
& x \in \mathcal{X}, \lambda \in \mathbb{R}^r_+, \; \alpha \in \mathbb{R}_+, \beta \in \mathbb{R},
  \label{eq:SIP:convexreform}
  \tag{\mbox{$\mathsf{SIPR}$}}
  \end{array}\right.
  \end{align}} 
is a restriction of problem \eqref{eq:SIP}. If $Q^1, \dots, Q^r$ are PSD, and if $Q(x)$ is PSD for every $x \in \mathcal{X}$, then the finite formulation \eqref{eq:SIP:convexreform} is a reformulation of \eqref{eq:SIP}.
\end{theorem}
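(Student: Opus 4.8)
The plan is to read the semidefinite block of \eqref{eq:SIP:convexreform} as a dual-feasibility certificate for the inner problem and to chain the duality relations with the relaxation bound of Lemma~\ref{lem:qcqpsdp}. For the \textbf{restriction} claim, I would fix an arbitrary feasible point $(x,\lambda,\alpha,\beta)$ of \eqref{eq:SIP:convexreform} and observe that $(\lambda,\alpha,\beta)$ is then feasible for \eqref{eq:DSDP}, with dual objective $\alpha(1+\rho^2)+\beta$. The first constraint of \eqref{eq:SIP:convexreform} forces this objective to be nonpositive, so this single dual point already certifies $\mathsf{val}\eqref{eq:DSDP}\le 0$. Weak duality between \eqref{eq:SDP_relax} and \eqref{eq:DSDP} (or directly Lemma~\ref{lem:strong_duality}) gives $\mathsf{val}\eqref{eq:SDP_relax}\le\mathsf{val}\eqref{eq:DSDP}\le 0$, and the relaxation inequality $\mathsf{val}\eqref{eq:QCQP}\le\mathsf{val}\eqref{eq:SDP_relax}$ from Lemma~\ref{lem:qcqpsdp} yields $\phi(x)=\mathsf{val}\eqref{eq:QCQP}\le 0$. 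Hence $x$ satisfies $G(x,y)\le 0$ for all $y\in\mathcal{Y}$, i.e.\ $x$ is feasible in \eqref{eq:SIP} with identical cost $F(x)$. Since every feasible point of \eqref{eq:SIP:convexreform} projects onto a feasible point of \eqref{eq:SIP} of the same cost, \eqref{eq:SIP:convexreform} is a restriction.

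For the \textbf{reformulation} claim under the PSD hypotheses, I would establish the reverse inclusion. Starting from an arbitrary $x$ feasible in \eqref{eq:SIP}, we have $\phi(x)=\mathsf{val}\eqref{eq:QCQP}\le 0$. When $Q(x),Q^1,\dots,Q^r$ are PSD, the second part of Lemma~\ref{lem:qcqpsdp} upgrades the relaxation to an equality $\mathsf{val}\eqref{eq:SDP_relax}=\mathsf{val}\eqref{eq:QCQP}\le 0$, and Lemma~\ref{lem:strong_duality} transfers this to $\mathsf{val}\eqref{eq:DSDP}=\mathsf{val}\eqref{eq:SDP_relax}\le 0$. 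Invoking attainment of the dual optimum produces $(\lambda,\alpha,\beta)$ feasible in \eqref{eq:DSDP} with $\alpha(1+\rho^2)+\beta=\mathsf{val}\eqref{eq:DSDP}\le 0$. This $(x,\lambda,\alpha,\beta)$ satisfies every constraint of \eqref{eq:SIP:convexreform}: dual feasibility supplies the semidefinite and sign constraints, while nonpositivity of the dual objective supplies the first constraint. Thus $x$ lifts to a feasible point of \eqref{eq:SIP:convexreform} of the same cost, and together with the restriction part the two feasible sets (in the $x$-variable) coincide, so $\mathsf{val}\eqref{eq:SIP:convexreform}=\mathsf{val}\eqref{eq:SIP}$.

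The main obstacle is precisely the attainment of the dual optimum used in the reformulation step: the value equality $\mathsf{val}\eqref{eq:DSDP}=\mathsf{val}\eqref{eq:SDP_relax}\le 0$ does not, on its own, guarantee a dual-feasible point with \emph{nonpositive} objective in the borderline case $\mathsf{val}\eqref{eq:DSDP}=0$ without attainment. I would therefore ensure dual attainment is available, e.g.\ through a Slater point for the primal \eqref{eq:SDP_relax}, whose existence rests on the nonemptiness and regularity of $\mathcal{Y}$ together with the ball regularization $\langle I_{n+1},Y\rangle\le 1+\rho^2$ that renders the feasible set of \eqref{eq:SDP_relax} compact; this is exactly the ingredient packaged into Lemma~\ref{lem:strong_duality}. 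The restriction direction, by contrast, relies only on weak duality and is essentially immediate.
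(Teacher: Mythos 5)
Your proposal follows the paper's proof essentially step for step: the restriction claim is obtained by reading $(\lambda,\alpha,\beta)$ as a dual certificate, chaining weak duality between \eqref{eq:SDP_relax} and \eqref{eq:DSDP} with the relaxation inequality $\mathsf{val}\eqref{eq:QCQP}\leq\mathsf{val}\eqref{eq:SDP_relax}$ of Lemma~\ref{lem:qcqpsdp}; the reformulation claim is obtained from the equality case of Lemma~\ref{lem:qcqpsdp} under the PSD hypotheses, the strong duality of Lemma~\ref{lem:strong_duality}, and attainment in \eqref{eq:DSDP} to lift a feasible $x$ of \eqref{eq:SIP} to a feasible point of \eqref{eq:SIP:convexreform}. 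The paper packages both directions as a chain of equivalences, but the mathematical content is the same, and your observation that the restriction direction needs only weak duality is accurate.

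The step that would fail is the patch you propose for the attainment issue. You are right that the borderline case $\mathsf{val}\eqref{eq:DSDP}=0$ is exactly where one needs a dual-feasible point with nonpositive objective, i.e., attainment of the minimum in \eqref{eq:DSDP}, and the principle you invoke (primal Slater point $\Rightarrow$ dual attainment) is the correct one. However, your claim that this ingredient is ``exactly'' what Lemma~\ref{lem:strong_duality} packages is wrong: the proof of that lemma exhibits a strictly feasible point of the \emph{dual} \eqref{eq:DSDP}, which by conic duality yields zero gap together with attainment in the \emph{primal} \eqref{eq:SDP_relax}, not in \eqref{eq:DSDP}. A genuine Slater point for \eqref{eq:SDP_relax} would require some $Y\succ 0$ satisfying $\langle\mathbf{\mathcal{Q}}^j,Y\rangle<0$ and $\langle I_{n+1},Y\rangle<1+\rho^2$, which amounts to $\mathcal{Y}$ containing a strict interior point of norm less than $\rho$ --- an assumption Assumption~\ref{as:subpbqcqp} does not make ($\mathcal{Y}$ is only nonempty, compact, and contained in $B(0,\rho)$). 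Moreover, the compactness of the feasible set of \eqref{eq:SDP_relax} induced by the ball constraint, which you also cite, guarantees attainment of the primal \emph{maximum}, which is irrelevant to attainment of the dual \emph{minimum}. So your fix does not go through under the paper's hypotheses; to be fair, the paper itself elides this same point by reading the equivalence $\mathsf{val}\eqref{eq:DSDP}\leq 0\iff\exists(\lambda,\alpha,\beta)$ feasible with $\alpha(1+\rho^2)+\beta\leq 0$ directly out of Lemma~\ref{lem:strong_duality}, so apart from this misattributed justification your argument is at parity with the published one.
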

\begin{proof}
Let $\mathsf{Feas}$\eqref{eq:SIP} and $\mathsf{Feas}$\eqref{eq:SIP:convexreform} be the feasible sets of \eqref{eq:SIP} and \eqref{eq:SIP:convexreform}, respectively. Since \eqref{eq:SIP} and \eqref{eq:SIP:convexreform} share the same objective function, proving for every $x\in \mathcal{X}$ the implication \vspace*{-0.6em}
 \begin{equation}\label{eq:feas}
     \left( \exists \; \lambda \in \mathbb{R}^r_+, \; \alpha \in \mathbb{R}_+,\; \beta \in \mathbb{R} :\; (x,\lambda,\alpha,\beta) \in  \mathsf{Feas}\text{\eqref{eq:SIP:convexreform}} \right)\; \Longrightarrow \; x \in \mathsf{Feas}\text{\eqref{eq:SIP}},
 \end{equation} 
will prove the first part of the theorem. For every $x\in \mathcal{X}$, we have:\vspace*{-0.8em}
\begin{equation}
    \label{eq:implication}
          \mathsf{val}\text{\eqref{eq:SDP_relax}}   \leq 0
     \Longrightarrow 
         \mathsf{val}\text{\eqref{eq:QCQP}} \leq 0
     \iff x \in \mathsf{Feas}\text{\eqref{eq:SIP}},
\end{equation}
where the implication stems from the fact that $\mathsf{val}$\eqref{eq:QCQP} $\leq \mathsf{val}$\eqref{eq:SDP_relax}. Applying the strong duality Lemma~\ref{lem:strong_duality}, we obtain that, for every $x \in \mathcal{X}$, 
{\small\begin{align}\label{eq:impl2}
    \hspace*{-2.9mm}\mathsf{val}\text{\eqref{eq:SDP_relax}} \leq 0 \hspace{-1mm}& \hspace{-0.5mm} \iff \mathsf{val}\text{\eqref{eq:DSDP}} \leq 0 \\
     \hspace{-1mm}& \hspace{-0.5mm} \iff 
	 \exists \, \lambda \in \mathbb{R}^r_+, \, \alpha \in \mathbb{R}_+,\, \beta \in \mathbb{R} : \left\{\begin{array}{l} \alpha (1 + \rho^2) + \beta \leq 0 \\ \sum\limits_{j=1}^r  \lambda_j \mathbf{\mathcal{Q}}^j  + \alpha I_{n+1} + \beta E - \mathbf{\mathcal{Q}}(x) \succeq 0 \: . \end{array} \right.  \\
     \hspace{-1mm}& \hspace{-0.5mm}\iff \exists\, \lambda \in \mathbb{R}^r_+, \, \alpha \in \mathbb{R}_+,\, \beta \in \mathbb{R},\; (x,\lambda,\alpha,\beta) \in  \mathsf{Feas}\text{\eqref{eq:SIP:convexreform}}. \label{eq:impl3}
\end{align}}

The equivalence \eqref{eq:impl3}, together with implication \eqref{eq:implication}, prove the implication \eqref{eq:feas}.

If $Q^1, \dots, Q^r$ are PSD, and if $Q(x)$ is PSD for every $x \in \mathcal{X}$, we can replace the implication \eqref{eq:implication} by the equivalence
$ \mathsf{val}\text{\eqref{eq:SDP_relax}} \leq 0  \iff           \mathsf{val}\text{\eqref{eq:QCQP}} \leq 0   \iff x \in \mathsf{Feas}$\eqref{eq:SIP}. 
This, together with equivalence \eqref{eq:impl3}, proves that\vspace{-0.5em}
\begin{equation*}
\exists \; \lambda \in \mathbb{R}^r_+, \; \alpha \in \mathbb{R}_+,\; \beta \in \mathbb{R} :\; (x,\lambda,\alpha,\beta) \in  \mathsf{Feas}\text{\eqref{eq:SIP:convexreform}} \; \iff \; x \in \mathsf{Feas}\text{\eqref{eq:SIP}},\vspace{-0.5em}
\end{equation*} 
i.e., \eqref{eq:SIP:convexreform} is a reformulation of \eqref{eq:SIP}, having the same objective function.
\end{proof}

Note that under Assumptions~\ref{as:convex} and \ref{as:subpbqcqp}, the finite formulation \eqref{eq:SIP:convexreform} is convex. 

\subsection{Optimality of the restriction: a sufficient condition}\label{sec:sufficientcondition}

Theorem~\ref{th:finiteformulation} states that the single-level finite formulation \eqref{eq:SIP:convexreform} is an exact reformulation of the problem \eqref{eq:SIP}, if $Q^1,\dots Q^r$ are PSD, and $Q(x) \succeq 0$ for all $x \in \mathcal{X}$. Even if this \textit{a priori} condition is not satisfied for all $x\in \mathcal{X}$, as what is done in \cite{cerulli2022} in a different setting, an \textit{a posteriori} condition on the computed solution $\bar{x}$ of \eqref{eq:SIP:convexreform} enables us to state that $\bar{x}$ is an optimal solution of \eqref{eq:SIP}. Assumptions~\ref{as:convex} and \ref{as:subpbqcqp} are fundamental here to ensure that the feasible sets of \eqref{eq:SIP} and \eqref{eq:SIP:convexreform} are closed and convex.

\begin{theorem}
\label{th:restriction_optimality}
Under Assumptions~\ref{as:convex} and \ref{as:subpbqcqp}, and assuming $Q^1, \dots, Q^r$ are PSD, let $\bar{x}$ be a solution of the single-level formulation \eqref{eq:SIP:convexreform}. If $Q(\bar{x}) \succ 0$, then $\bar{x}$ is optimal in \eqref{eq:SIP}.
\end{theorem}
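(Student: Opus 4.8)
The plan is to exploit the convexity of \eqref{eq:SIP} together with the \emph{local} tightness of the Shor relaxation at $\bar{x}$ that is guaranteed by $Q(\bar{x}) \succ 0$, and to argue by contradiction. First I would record two facts. By Theorem~\ref{th:finiteformulation}, \eqref{eq:SIP:convexreform} is a restriction of \eqref{eq:SIP}, so $\bar{x}$ is feasible in \eqref{eq:SIP} and $F(\bar{x}) = \mathsf{val}\text{\eqref{eq:SIP:convexreform}} \geq \mathsf{val}\text{\eqref{eq:SIP}}$; it then remains only to prove the reverse inequality $F(\bar{x}) \leq \mathsf{val}\text{\eqref{eq:SIP}}$. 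Moreover, under Assumptions~\ref{as:convex} and~\ref{as:subpbqcqp} the map $x \mapsto G(x,y)$ is affine for each fixed $y$, so $\phi(x) = \max_{y \in \mathcal{Y}} G(x,y)$ is a maximum of affine maps, hence convex; consequently the feasible set $\{x \in \mathcal{X} : \phi(x) \leq 0\}$ of \eqref{eq:SIP} is convex and \eqref{eq:SIP} is a convex program.

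Next I would suppose, for contradiction, that $\bar{x}$ is not optimal in \eqref{eq:SIP}, i.e.\ there exists a feasible point $x^\star$ with $F(x^\star) < F(\bar{x})$. Consider the segment $x_t = (1-t)\bar{x} + t x^\star$ for $t \in [0,1]$. By convexity of $\mathcal{X}$ we have $x_t \in \mathcal{X}$; by convexity of $\phi$ and feasibility of $\bar{x}$ and $x^\star$ we get $\phi(x_t) \leq (1-t)\phi(\bar{x}) + t\,\phi(x^\star) \leq 0$; and by convexity of $F$ we get $F(x_t) \leq (1-t)F(\bar{x}) + t\,F(x^\star) < F(\bar{x})$ for every $t \in (0,1]$.

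The crux is to show that $x_t$ is also feasible for the restriction \eqref{eq:SIP:convexreform} when $t$ is small. Since $x \mapsto Q(x)$ is linear, hence continuous, and positive definiteness is an open condition, $Q(\bar{x}) \succ 0$ implies $Q(x_t) \succ 0$ for all sufficiently small $t > 0$. For such $t$, the matrices $Q(x_t), Q^1, \dots, Q^r$ are all PSD, so Lemma~\ref{lem:qcqpsdp} yields tightness of the Shor relaxation at $x_t$, namely $\mathsf{val}\text{\eqref{eq:SDP_relax}} = \mathsf{val}\text{\eqref{eq:QCQP}} = \phi(x_t) \leq 0$. Then, exactly as in the chain \eqref{eq:impl2}--\eqref{eq:impl3} of the proof of Theorem~\ref{th:finiteformulation} (which rests on the strong duality of Lemma~\ref{lem:strong_duality}), the inequality $\mathsf{val}\text{\eqref{eq:SDP_relax}} \leq 0$ at $x_t$ produces multipliers $\lambda \in \mathbb{R}^r_+$, $\alpha \in \mathbb{R}_+$, $\beta \in \mathbb{R}$ with $(x_t, \lambda, \alpha, \beta) \in \mathsf{Feas}\text{\eqref{eq:SIP:convexreform}}$. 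Hence $\mathsf{val}\text{\eqref{eq:SIP:convexreform}} \leq F(x_t) < F(\bar{x}) = \mathsf{val}\text{\eqref{eq:SIP:convexreform}}$, a contradiction.

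I expect the delicate point to be precisely this local step: converting the global hypothesis "$Q^j \succeq 0$ for all $j$" and the pointwise hypothesis "$Q(\bar{x}) \succ 0$" into tightness of \eqref{eq:SDP_relax} along an entire sub-segment emanating from $\bar{x}$. This is what makes the a posteriori certificate effective even though \eqref{eq:SIP:convexreform} is, globally, only a restriction of \eqref{eq:SIP}: a strictly better feasible direction of \eqref{eq:SIP} would have to remain feasible for \eqref{eq:SIP:convexreform} near $\bar{x}$, contradicting optimality of $\bar{x}$ for the restriction. Ruling out the contradiction gives $F(\bar{x}) \leq \mathsf{val}\text{\eqref{eq:SIP}}$, and combined with the first paragraph this proves that $\bar{x}$ is optimal in \eqref{eq:SIP}.
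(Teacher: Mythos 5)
Your proof is correct, but it takes a genuinely different route from the paper's. The paper also starts from the observation that $Q(\bar{x}) \succ 0$ forces $Q(x) \succeq 0$ on a whole ball $B(\bar{x},r)$, but it then uses this to show that the feasible set $C$ of \eqref{eq:SIP} and the feasible set $\hat{C}$ of \eqref{eq:SIP:convexreform} \emph{coincide} inside $B(\bar{x},r)$, hence have the same tangent and normal cones at $\bar{x}$; optimality of $\bar{x}$ in \eqref{eq:SIP:convexreform} is then encoded as $0 \in \partial F(\bar{x}) + N_{\hat{C}}(\bar{x})$ via a first-order optimality condition from convex analysis, and the identity $N_{\hat{C}}(\bar{x}) = N_{C}(\bar{x})$ transfers this certificate to \eqref{eq:SIP}. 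You replace that variational-analysis step by an elementary segment argument: a strictly better SIP-feasible point $x^\star$ would yield points $x_t$ on the segment $[\bar{x},x^\star]$ that are SIP-feasible (convexity of $\phi$), strictly better than $\bar{x}$ (convexity of $F$), and, for $t$ small, satisfy $Q(x_t)\succ 0$, hence lift to feasible points of \eqref{eq:SIP:convexreform} by Lemma~\ref{lem:qcqpsdp} together with the equivalence \eqref{eq:impl2}--\eqref{eq:impl3}, contradicting optimality of $\bar{x}$ in the restriction. Both arguments rest on the same two pillars, namely local tightness of the Shor relaxation near $\bar{x}$ and the equivalence between $\mathsf{val}\text{\eqref{eq:SDP_relax}} \leq 0$ and lifted feasibility, so your proof inherits exactly the same dependencies as the paper's (including the implicit dual-attainment step inside \eqref{eq:impl2}--\eqref{eq:impl3}, which is the paper's responsibility, not yours). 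What your route buys is self-containedness: it needs only convexity of $F$ and $\phi$, the openness of the positive-definite cone, and optimality of $\bar{x}$ in the restriction, with no recourse to tangent/normal cones or subdifferential calculus. What the paper's route buys is the slightly stronger geometric fact that $C$ and $\hat{C}$ agree on a full neighborhood of $\bar{x}$, not merely along segments pointing into the SIP-feasible set, which is what makes the normal-cone identity, and hence the transfer of optimality certificates, immediate.
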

\begin{proof}
Given a closed convex set $S$, according to Def.~5.1.1 in \cite[Chap.~III]{hiriart2013convex}, the tangent cone to $S$ at $x$ (denoted by $T_S(x)$) is the set of directions $u \in \mathbb{R}^m$ such that there exist a sequence $(x_k)_{k \in \mathbb{N}}$ in $S$, and a positive sequence $(t_k)_{k \in \mathbb{N}}$ s.t.\ $t_k \rightarrow 0$ and $\frac{x_k - x}{t_k} \rightarrow u$. Moreover, according to Def.~5.2.4 in \cite[Chap.~III]{hiriart2013convex}, the normal cone $N_S(x)$ to $S$ at $x$ is the polar cone of the tangent cone $T_S(x)$, i.e., $N_S(x) = T_S(x)^\circ$. We define the closed convex set $C$ (resp.\ $\hat{C}$) as the feasible set of formulation \eqref{eq:SIP} (resp.\ \eqref{eq:SIP:convexreform}).

Since $Q(\bar{x}) \succ 0$, the set of positive definite matrices is open, and $Q(x)$ is continuous, there exists $r >0$ s.t.\ for all $x$ in the open ball of radius $r$ with center $\bar{x}$ (denoted by $B(\bar{x},r)$) $Q(x) \succeq 0$. This means that for all $x$ in $\mathcal{X} \cap B(\bar{x},r)$, $\mathsf{val}\text{\eqref{eq:QCQP}} = \mathsf{val}$\eqref{eq:SDP_relax}. Hence, we deduce that, for every $x \in \mathcal{X} \cap B(\bar{x},r)$, $x$ is feasible in \eqref{eq:SIP} if and only if $x$ is feasible in \eqref{eq:SIP:convexreform}. In other words, $C \cap B(\bar{x},r) = \hat{C} \cap B(\bar{x},r)$. According to the aforementioned definition of the tangent and normal cones, we further deduce that $T_C(\bar{x}) = T_{\hat{C}}(\bar{x})$, and $N_C(\bar{x})= T_C(\bar{x})^\circ =T_{\hat{C}}(\bar{x})^\circ =  N_{\hat{C}}(\bar{x})$.

We know that $\bar{x}$ is optimal in \eqref{eq:SIP:convexreform}, i.e., $\bar{x} \in \arg\min_{x \in \hat{C}} F(x)$. Since $F$ is a finite-valued convex function, and $\hat{C}$ is a closed and convex set, the assumptions of Theorem~1.1.1 in \cite[Chap.~VII]{hiriart2013convex} hold, and we can deduce that $0 \in \partial F(\bar{x}) +  N_{\hat{C}}(\bar{x})$. Using the equality $N_C(\bar{x}) = N_{\hat{C}}(\bar{x})$, we have that $0 \in \partial F(\bar{x}) +  N_{C}(\bar{x})$ too. Applying the same theorem with the closed and convex set $C$, we know that $0 \in \partial F(\bar{x}) +  N_{C}(\bar{x})$ implies that $\bar{x} \in \arg\min\limits_{x \in C} F(x)$, meaning that $\bar{x}$ is optimal in \eqref{eq:SIP}.
\end{proof}

\subsection{The IOA algorithm}\label{sec:IO}
If neither the lower level is convex, nor the sufficient optimality condition in Theorem~\ref{th:restriction_optimality} is satisfied, we do not directly obtain an optimal solution of problem~\eqref{eq:SIP} by solving the finite formulation \eqref{eq:SIP:convexreform}. In this section, we present an algorithm based on the lower-level dualization approach (presented above) and on an inexact separation oracle, that generates a minimizing sequence of feasible points of problem~\eqref{eq:SIP}.

For $k \in \mathbb{N}_{+}$, we consider two finite sequences $x^{1}, \dots, x^{k-1} \in \mathcal{X}$, and $v_{1}, \dots, v_{k-1} \in \mathbb{R}$ s.t.\ $v_\ell$ is an upper bound on $\mathsf{val}(\mathsf{P}_{x^{\ell}})$, given by a $\delta$-oracle. Since, for all $\ell = 1, \dots, k-1$, the inequality $-\frac{1}{2}y^\top Q(x^{\ell})y +q(x^{\ell})^\top y + b(x^\ell) \leq v_\ell,$ (i.e., $\langle \mathbf{\mathcal{Q}}(x^\ell), Y \rangle  \leq  v_\ell$ for $Y:=yy^\top$) holds for any $y \in \mathcal{Y}$, the following SDP problem is still a relaxation of \eqref{eq:QCQP}, for every $x \in \mathcal{X}$:

\begin{equation}\label{eq:SDPk_relax}
\left\{  \begin{array}{cll}
     \max \limits_{Y \in \mathbb{S}_{n+1}} & \langle  \mathbf{\mathcal{Q}}(x), Y \rangle  & \\
     \text{s.t.} & \langle \mathbf{\mathcal{Q}}^j, Y \rangle  \leq  0 &  \forall j \in \{ 1, \dots, r \} \\
     & \langle \mathbf{\mathcal{Q}}(x^\ell), Y \rangle  \leq  v_\ell &  \forall \ell \in \{ 1, \dots, k-1 \} \\
     & \langle I_{n+1}, Y \rangle \leq  1+\rho^2 & \\
     & \langle E, Y \rangle = 1 & \\
     & Y \succeq  0. &
\end{array} \tag{\mbox{$\mathsf{SDP}^k_x$}} \right.
\end{equation}
We recall that the value of \eqref{eq:QCQP} is $\phi(x) = \max_{y \in \mathcal{Y}} G(x,y)$. For ease of reading, we also denote by $\phi_\mathsf{SDP}(x)$ the value of \eqref{eq:SDP_relax}, and by $\phi_\mathsf{SDP}^k(x)$ the value of \eqref{eq:SDPk_relax}. We underline that function $\phi_\mathsf{SDP}^k(x)$ implicitly depends on the sequences $(x^\ell)_{1\leq \ell \leq k}$ and $(v_\ell)_{1\leq \ell \leq k}$. Given the Lagrangian multiplier $\zeta_\ell$ associated to the constraint $\langle \mathbf{\mathcal{Q}}(x^\ell), Y \rangle \leq v_\ell$, the strong SDP dual of problem~\eqref{eq:SDPk_relax} is
\begin{equation}
\left\{ \begin{array}{cl}
     \min \limits_{\lambda, \alpha, \beta, \zeta} &  \alpha (1+\rho^2) + \beta + \sum_{\ell = 1}^{k-1} \zeta_\ell v_\ell\\
     \text{s.t.} &  \sum\limits_{j=1}^r  \lambda_j \mathbf{\mathcal{Q}}^j +  \sum_{\ell = 1}^{k-1} \zeta_\ell \mathbf{\mathcal{Q}}(x^\ell) + \alpha I_{n+1} + \beta E \succeq \mathbf{\mathcal{Q}}(x) \\
     & \lambda \in \mathbb{R}^r_+, \: \alpha \in \mathbb{R}_+, \beta \in \mathbb{R}, \zeta \in \mathbb{R}^{k-1}_+
\end{array}
\tag{\mbox{$\mathsf{DSDP}^k_x$}} \right.
\label{eq:DSDPk}
\end{equation}
Applying the analog of Lemma~\ref{lem:strong_duality} to the primal-dual pair of \eqref{eq:SDPk_relax}--\eqref{eq:DSDPk}, for every $\hat{x} \in \mathcal{X}$, $\phi_\mathsf{SDP}^k(\hat{x}) \leq 0$ holds if and only if $\hat{x} \in \mathcal{R}^k$, where $\mathcal{R}^k$ is defined as
\begin{align*}
  \mathcal{R}^k = \left \lbrace \hat{x} \in \mathbb{R}^m \colon    \exists  (\lambda,\alpha,\zeta) \in \mathbb{R}^{r+k}_+,  \exists \beta \in \mathbb{R},  \: \left( \alpha (1+\rho^2) + \beta + \sum_{\ell = 1}^{k-1} \zeta_\ell v_\ell \leq 0 \right) \right. \\  \hspace{2cm}\left. \wedge \left( \sum\limits_{j=1}^r  \lambda_j \mathbf{\mathcal{Q}}^j +  \sum_{\ell = 1}^{k-1} \zeta_\ell \mathbf{\mathcal{Q}}(x^\ell) + \alpha I_{n+1} + \beta E \succeq \mathbf{\mathcal{Q}}(\hat{x}) \right) \right \rbrace .
\end{align*}

\begin{proposition} \label{prop:phik}
Under Assumption~\ref{as:subpbqcqp}, for every finite sequences $x^{1}, \dots, x^{k-1} \in \mathcal{X}$, and $v_{1}, \dots, v_{k-1} \in \mathbb{R}$ s.t.\ $v_\ell \geq \phi(x^{\ell})$, the resulting set $\mathcal{X} \cap \mathcal{R}^k$ is included in the feasible set of \eqref{eq:SIP}.
\end{proposition}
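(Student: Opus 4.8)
The plan is to prove the inclusion $\mathcal{X} \cap \mathcal{R}^k \subseteq \mathsf{Feas}\text{\eqref{eq:SIP}}$ by establishing, for an arbitrary $\hat{x} \in \mathcal{X} \cap \mathcal{R}^k$, the chain $\phi(\hat{x}) \leq \phi_\mathsf{SDP}^k(\hat{x}) \leq 0$, and then translating $\phi(\hat{x}) \leq 0$ into feasibility in \eqref{eq:SIP}. The whole argument simply reassembles the relaxation and strong-duality facts already set up in the preceding text, so it is short.

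First I would fix $\hat{x} \in \mathcal{X} \cap \mathcal{R}^k$ and unpack the membership $\hat{x} \in \mathcal{R}^k$: by definition there exist $(\lambda,\alpha,\zeta) \in \mathbb{R}^{r+k}_+$ and $\beta \in \mathbb{R}$ that are feasible for \eqref{eq:DSDPk} at $x = \hat{x}$ and whose objective $\alpha(1+\rho^2) + \beta + \sum_{\ell=1}^{k-1}\zeta_\ell v_\ell$ is nonpositive. By the analog of the strong-duality Lemma~\ref{lem:strong_duality} applied to the primal-dual pair \eqref{eq:SDPk_relax}--\eqref{eq:DSDPk}, the existence of such a dual point with nonpositive objective is equivalent to $\phi_\mathsf{SDP}^k(\hat{x}) \leq 0$, since $\phi_\mathsf{SDP}^k(\hat{x})$ is the common optimal value of \eqref{eq:SDPk_relax} and \eqref{eq:DSDPk}.

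Next I would invoke the relaxation inequality $\phi(\hat{x}) \leq \phi_\mathsf{SDP}^k(\hat{x})$. This is exactly the point where the hypothesis $v_\ell \geq \phi(x^\ell)$ is used: for any $y \in \mathcal{Y}$ (i.e.\ any point feasible in \eqref{eq:QCQP} at $x=\hat{x}$), the rank-one lift $Y = yy^\top$ satisfies every constraint of \eqref{eq:SDPk_relax}, the only nontrivial ones being the cuts $\langle \mathbf{\mathcal{Q}}(x^\ell), Y \rangle = G(x^\ell, y) \leq v_\ell$, which hold because $G(x^\ell,y) \leq \phi(x^\ell) \leq v_\ell$. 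Hence $Y = yy^\top$ remains feasible in \eqref{eq:SDPk_relax}, so \eqref{eq:SDPk_relax} dominates \eqref{eq:QCQP} and $\phi(\hat{x}) = \mathsf{val}\text{\eqref{eq:QCQP}} \leq \phi_\mathsf{SDP}^k(\hat{x})$; this relaxation property is already asserted in the text introducing \eqref{eq:SDPk_relax}.

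Combining the two steps gives $\phi(\hat{x}) \leq 0$, i.e.\ $\max_{y\in\mathcal{Y}} G(\hat{x},y) \leq 0$, equivalently $G(\hat{x},y) \leq 0$ for all $y \in \mathcal{Y}$; together with $\hat{x}\in\mathcal{X}$ this yields $\hat{x} \in \mathsf{Feas}\text{\eqref{eq:SIP}}$. There is no genuine obstacle here; the only point demanding care is making explicit the validity of the cut constraints for every rank-one lift $Y=yy^\top$ of a point of $\mathcal{Y}$, as this is precisely what the bound $v_\ell \geq \phi(x^\ell)$ guarantees and what keeps \eqref{eq:SDPk_relax} a relaxation rather than a restriction of \eqref{eq:QCQP}.
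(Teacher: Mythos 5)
Your proof is correct and takes essentially the same route as the paper's: the paper merely packages your chain as a single application of Theorem~\ref{th:finiteformulation} to the inner problem augmented with the valid cuts $\langle \mathbf{\mathcal{Q}}(x^\ell), Y \rangle \leq v_\ell$, whereas you unroll that chain explicitly (duality gives $\phi_\mathsf{SDP}^k(\hat{x}) \leq 0$, validity of the cuts keeps \eqref{eq:SDPk_relax} a relaxation of the inner problem, hence $\phi(\hat{x}) \leq 0$), and in fact you only need the weak-duality direction of the equivalence you cite. One cosmetic point: the rank-one lift should be that of the augmented vector, $Y = \begin{pmatrix} y \\ 1 \end{pmatrix}\begin{pmatrix} y \\ 1 \end{pmatrix}^{\top}$, so that $\langle E, Y \rangle = 1$ holds --- though the paper itself uses the same shorthand $Y = yy^\top$.
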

\begin{proof}
    We apply Theorem~\ref{th:finiteformulation} to the inner problem modified with the additional valid cuts $\langle \mathbf{\mathcal{Q}}(x^\ell), Y \rangle  \leq  v_\ell \; \forall \ell \in \{1,\dots,k-1\}$, which do not change the value of the non-convex inner problem $\phi(x)$, and, therefore, do not change the feasible set of problem~\eqref{eq:SIP}. We deduce that $\mathcal{X} \cap \mathcal{R}^k$ is a subset of the feasible set of \eqref{eq:SIP}. 
\end{proof}

\begin{algorithm}[h!]
{\small
\caption{IOA algorithm with inexact oracle}
\label{alg:IO}
\begin{algorithmic}[1]
    \State{\textbf{Input:} Oracle with parameter $\delta \in [0,1)$, tolerances $(\epsilon_1, \epsilon_2) \in \mathbb{R}_+^2$, bounds $\underline{\mu}, \overline{\mu} \in \mathbb{R}_{++}$.}
    \State{Solve the restriction~\eqref{eq:SIP:convexreform}, to obtain $\hat{x}^0$.}
    \If{$Q(\hat{x}^0)\succ 0$ and $Q^1, \dots, Q^r \succeq 0$} \label{step:suffcond}
    \State Return $\hat{x}^0$.
    \EndIf
    \State{$k\gets0$, $\mathcal{Y}^0 \gets \emptyset$, $(\nu^0_1, \nu^0_2) \gets (\infty,\infty)$.}
    \While{$\nu^k_1  > \epsilon_1$ or $\nu^k_2  > \epsilon_2$}
        \State Choose $\mu_k \in [\underline{\mu}, \overline{\mu}]$, and compute an optimal solution $(x^k,\hat{x}^k)$ of 
        \begin{align}
            \left\{ \begin{array}{rl}\label{eq:master_problem_IO}
          \min\limits_{x, \hat{x} \in \mathcal{X}} & F(x) + F(\hat{x}) + \frac{\mu_k}{2} \lVert x - \hat{x}\rVert^2 \\
                \text{s.t.} &   G(x,y) \leq 0 \quad \forall y \in \mathcal{Y}^k \\
                & \hat{x} \in \mathcal{R}^k.
        \end{array}\right.
        \end{align}
	\State Call the $\delta$-oracle to compute an approximate solution $y^{k} \in \mathcal{Y} = \hat{y}(x^k)$, and an upper bound $v_k = \hat{v}(x^k)$ of $\max_{y \in \mathcal{Y}} G(x^k,y)$.
        \State Based on $x^k$ and $v_k$, update the set $\mathcal{R}^k$ in $\mathcal{R}^{k+1}$. 
        \State {$\mathcal{Y}^{k+1} \gets \mathcal{Y}^{k} 
        \cup \{ y^k \} $, $\; (\nu^{k+1}_1,\nu^{k+1}_2) \gets (G(x^k,y^k), \lVert x^k -\hat{x}^k \rVert)$}
	\State {$k \gets k + 1$}
    \EndWhile
    \State  Return $(x^k,\hat{x}^k)$.    \label{steptermIOA}
\end{algorithmic}}
\end{algorithm}

Alg.~\ref{alg:IO} is the pseudocode of the IOA algorithm with inexact oracle. It starts by solving the restriction \eqref{eq:SIP:convexreform} and checks whether the condition presented in Theorem~\ref{th:restriction_optimality} is satisfied or not. If yes, the algorithm stops returning the solution which is optimal for both \eqref{eq:SIP:convexreform} and \eqref{eq:SIP}. Otherwise, it performs a sequence of iterations, until the stopping criteria are satisfied, i.e., $G(x^k,y^k) \leq \epsilon_1$, and $\lVert x^k - \hat{x}^k \rVert \leq \epsilon_2$. At each iteration, the convex optimization problem \eqref{eq:master_problem_IO} is solved. This problem is a \textit{coupling} between the minimization of $F$ on a relaxed set, and the minimization of $F$ on a restricted set. Indeed, $x$ belongs to an outer-approximation (relaxation), whereas $\hat{x}$ belongs to an inner-approximation (restriction) of \eqref{eq:SIP} feasible set. The minimization of $F$ over these two sets is coupled by a proximal term that penalizes the distance between $x$ and $\hat{x}$. After solving the master problem~\eqref{eq:master_problem_IO}, the lower-level problem~\eqref{eq:QCQP} is solved for $x = x^k$. The {solution} of this problem is used to restrict the outer-approximation, and to enlarge the inner-approximation. 

Before proving the termination and the convergence of Alg.~\ref{alg:IO}, under Assumptions~\ref{as:convex} and \ref{as:subpbqcqp}, we introduce two technical lemmas.
\begin{lemma}
    Under Assumptions~\ref{as:convex} and \ref{as:subpbqcqp}, denoting by $x^*$ an optimal solution of \eqref{eq:SIP}, if Alg.~\ref{alg:IO} runs iteration $k$, $F(x^k) \leq F(x^*) + \mu_k(x^k-\hat{x}^k)^\top (x^* - x^k)$. \label{lem:DirOpt}
\end{lemma}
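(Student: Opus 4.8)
The plan is to exploit the optimality of the pair $(x^k,\hat{x}^k)$ for the coupled master problem~\eqref{eq:master_problem_IO} by isolating the dependence on the first block variable $x$. First I would record two facts. Since $x^*$ is feasible in~\eqref{eq:SIP} and $\mathcal{Y}^k \subseteq \mathcal{Y}$, the point $x^*$ satisfies $G(x^*,y)\leq 0$ for all $y\in\mathcal{Y}^k$ and lies in $\mathcal{X}$; hence $x^*$ belongs to the set $C_k := \{x\in\mathcal{X}\colon G(x,y)\leq 0\ \forall y\in\mathcal{Y}^k\}$, which is convex by Assumption~\ref{as:convex} and over which the $x$-block of~\eqref{eq:master_problem_IO} ranges. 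Second, since $(x^k,\hat{x}^k)$ is jointly optimal, $x^k$ must solve the partial problem $\min_{x\in C_k}\, g(x)$ with $g(x):=F(x)+\tfrac{\mu_k}{2}\lVert x-\hat{x}^k\rVert^2$: any $x'\in C_k$ with $g(x')<g(x^k)$ would, together with $\hat{x}^k\in\mathcal{R}^k$, yield a feasible pair $(x',\hat{x}^k)$ strictly improving the joint objective, a contradiction.

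Next I would use that $g$ is $\mu_k$-strongly convex (the quadratic term is $\mu_k$-strongly convex and $F$ is convex) and that $x^k$ minimizes $g$ over $C_k$. The standard strong-convexity optimality inequality then gives, for $x^*\in C_k$,
\[
 g(x^*)\ \geq\ g(x^k)+\tfrac{\mu_k}{2}\lVert x^*-x^k\rVert^2 .
\]
Writing out $g$ and expanding $\lVert x^*-\hat{x}^k\rVert^2=\lVert x^*-x^k\rVert^2+2(x^*-x^k)^\top(x^k-\hat{x}^k)+\lVert x^k-\hat{x}^k\rVert^2$, the terms $\tfrac{\mu_k}{2}\lVert x^*-x^k\rVert^2$ and $\tfrac{\mu_k}{2}\lVert x^k-\hat{x}^k\rVert^2$ cancel on both sides, leaving exactly $F(x^k)\leq F(x^*)+\mu_k(x^k-\hat{x}^k)^\top(x^*-x^k)$, which is the claim.

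The only delicate point is the strong-convexity optimality inequality above, which I would either invoke as a standard fact or reprove in one line: since $x^k$ minimizes $g$ over the convex set $C_k$, there is a subgradient $s\in\partial F(x^k)$ with $(s+\mu_k(x^k-\hat{x}^k))^\top(x-x^k)\geq 0$ for all $x\in C_k$, and $\mu_k$-strong convexity of $g$ gives $g(x)\geq g(x^k)+(s+\mu_k(x^k-\hat{x}^k))^\top(x-x^k)+\tfrac{\mu_k}{2}\lVert x-x^k\rVert^2$; combining the two yields the displayed bound. Equivalently, one may avoid the square-distance gain entirely by using the variational inequality directly: from $(s+\mu_k(x^k-\hat{x}^k))^\top(x^*-x^k)\geq 0$ and the subgradient inequality $F(x^*)\geq F(x^k)+s^\top(x^*-x^k)$ one obtains $F(x^k)\leq F(x^*)+\mu_k(x^k-\hat{x}^k)^\top(x^*-x^k)$ at once. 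I expect this subgradient / first-order-optimality justification, needed because $F$ is only assumed convex and not necessarily differentiable, to be the main thing requiring care; the remaining algebra is routine.
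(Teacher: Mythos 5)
Your proof is correct, and its essential content coincides with the paper's: both arguments freeze $\hat{x}$ at $\hat{x}^k$, use that $x^*$ (being feasible in \eqref{eq:SIP}, with $\mathcal{Y}^k \subset \mathcal{Y}$) is feasible for the $x$-block of \eqref{eq:master_problem_IO}, and then combine first-order optimality of $x^k$ for $g(x) = F(x) + \frac{\mu_k}{2}\lVert x - \hat{x}^k\rVert^2$ over that block with convexity of $F$. The paper phrases the first-order condition via directional derivatives — $F'(x^k,h) + \mu_k(x^k-\hat{x}^k)^\top h \geq 0$ for the admissible direction $h = x^*-x^k$, followed by $F(x^*)-F(x^k) \geq F'(x^k,h)$ — which is exactly your closing ``variational inequality plus subgradient inequality'' variant, with directional derivatives in place of subgradients. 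Your primary route, through the quadratic-growth bound $g(x^*) \geq g(x^k) + \frac{\mu_k}{2}\lVert x^*-x^k\rVert^2$ at a constrained minimizer of a strongly convex function, is a correct but slightly heavier packaging: the strong-convexity gain $\frac{\mu_k}{2}\lVert x^*-x^k\rVert^2$ is exactly consumed by the cross-term in the expansion of $\lVert x^*-\hat{x}^k\rVert^2$, so it delivers precisely the claimed inequality and nothing more — whereas plain optimality $g(x^*)\geq g(x^k)$ alone would leave the unwanted extra term $\frac{\mu_k}{2}\lVert x^*-x^k\rVert^2$ on the right-hand side, which is why the quadratic-growth refinement (or, more directly, the bare first-order argument the paper uses) is genuinely needed.
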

\begin{proof}
Proof in Appendix~\ref{app:DirOpt}.
\end{proof}
\begin{lemma} \label{lem:discretizationconv}
Consider a parameter $\delta \in [0,1)$, the infinite sequences $(x^k)_{k \in \mathbb{N}} \subset \mathcal{X}$ and $(y^k)_{k \in \mathbb{N}}\subset \mathcal{Y}$, where $y^k = \hat{y}(x^k)$ is the output of the $\delta$-oracle evaluated at point $x^k$. If these sequences are such that, for every $k \in \mathbb{N}$,\vspace{-0.7em}
    \begin{align*}
        (*) \quad G(x^k,y^\ell) \leq 0, \quad \forall \ell \in \{0, \dots ,k - 1 \}, \vspace*{-0.7em}
    \end{align*}
then, the feasibility error $\phi(x^k)^+$ vanishes in the limit $k\to \infty$.
\end{lemma}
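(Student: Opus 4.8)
The plan is to prove that the feasibility error $\phi(x^k)^+$ converges to $0$ by a compactness-and-contradiction argument exploiting the hypothesis $(*)$ together with the $\delta$-oracle bounds \eqref{eq:oraclebound}. First I would note that the sequence $\phi(x^k)^+$ takes values in a bounded set (since $\mathcal{X}$ and $\mathcal{Y}$ are compact and $G$ is continuous), so it suffices to show that every accumulation value $\ell$ of this sequence equals $0$. I would extract a subsequence $\psi(k)$ with $\phi(x^{\psi(k)})^+ \to \ell$ and, refining via compactness of $\mathcal{X}$ and $\mathcal{Y}$, arrange that $x^{\psi(k)} \to x \in \mathcal{X}$ and $y^{\psi(k)} \to y \in \mathcal{Y}$.

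The heart of the argument is to bound the current oracle value $G(x^{\psi(k)}, y^{\psi(k)})$ in terms of a difference that vanishes in the limit. The key observation is that hypothesis $(*)$ applied to the previous index of the subsequence gives $G(x^{\psi(k)}, y^{\psi(k-1)}) \leq 0$, provided $\psi(k-1) \in \{0,\dots,\psi(k)-1\}$, which holds because $\psi$ is strictly increasing. This yields
\begin{align*}
G(x^{\psi(k)}, y^{\psi(k)}) \leq G(x^{\psi(k)}, y^{\psi(k)}) - G(x^{\psi(k)}, y^{\psi(k-1)}),
\end{align*}
and, taking positive parts (using that $t \mapsto t^+$ is non-decreasing), inequality \eqref{eq:diffremark}. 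I would then translate the oracle guarantee into a bound on $\phi(x^{\psi(k)})^+$ itself: from \eqref{eq:oraclebound} one has $\phi(x) - G(x,\hat y(x)) \leq \delta|\phi(x)|$, and a short case analysis on the sign of $\phi$ shows $\phi(x)^+ \leq \frac{1}{1-\delta} G(x,\hat y(x))^+$ for all $x$. Combining this with \eqref{eq:diffremark} gives
\begin{align*}
\phi(x^{\psi(k)})^+ \leq \frac{1}{1-\delta} \Bigl( G(x^{\psi(k)}, y^{\psi(k)}) - G(x^{\psi(k)}, y^{\psi(k-1)}) \Bigr)^+.
\end{align*}

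To conclude, I would pass to the limit. Since $(x^{\psi(k)}, y^{\psi(k)}) \to (x,y)$ and also $(x^{\psi(k)}, y^{\psi(k-1)}) \to (x,y)$ (both second coordinates converge to the same $y$ because $y^{\psi(k-1)}$ is, up to a further refinement, a convergent subsequence with the same limit), continuity of $G$ and of the positive part forces the right-hand side to tend to $\frac{1}{1-\delta}(G(x,y) - G(x,y))^+ = 0$. Hence $\ell = \phi(x)^+ = 0$, and since every accumulation value of the bounded sequence $\phi(x^k)^+$ is $0$, the whole sequence vanishes. The main obstacle, and the point requiring care, is justifying that $y^{\psi(k-1)}$ converges to the same limit $y$ as $y^{\psi(k)}$: this is where I would either invoke a diagonal refinement argument or, more simply, first fix the subsequence along which $\phi(x^{\psi(k)})^+ \to \ell$ and then argue that since both $(x^{\psi(k)}, y^{\psi(k)})$ and $(x^{\psi(k)}, y^{\psi(k-1)})$ are evaluated at the same $x^{\psi(k)} \to x$, the difference $G(x^{\psi(k)}, y^{\psi(k)}) - G(x^{\psi(k)}, y^{\psi(k-1)})$ can be controlled by the uniform continuity of $G$ on the compact set $\mathcal{X} \times \mathcal{Y}$ together with the fact that consecutive oracle outputs cluster — though the cleanest route is the direct compactness extraction sketched above.
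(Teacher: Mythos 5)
Your proposal is correct and follows essentially the same argument as the paper's proof: extract a subsequence along which $\phi(x^{\psi(k)})^+$ converges and $(x^{\psi(k)},y^{\psi(k)})$ converges by compactness, apply $(*)$ at index $\psi(k-1) \leq \psi(k)-1$, convert the oracle guarantee into $\phi(x)^+ \leq \frac{1}{1-\delta}\,G(x,\hat{y}(x))^+$, and pass to the limit. The ``main obstacle'' you flag at the end is not one: once you have refined so that $y^{\psi(k)} \to y$, the sequence $(y^{\psi(k-1)})_{k \geq 1}$ is the very same convergent sequence shifted by one index, hence converges to the same limit $y$ automatically --- no diagonal refinement or uniform-continuity argument is needed, and this direct observation is exactly how the paper concludes.
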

\begin{proof}
    Proof in Appendix~\ref{app:discretizationconv}.
\end{proof}
Before analyzing the convergence and the termination of the IOA algorithm (Theorems~\ref{th:asymptoticConvIOA}--\ref{th:termIOA-gap}), we make the additional assumption that $F$ is Lipschitz continuous, and that the Slater condition holds for the restriction~\eqref{eq:SIP:convexreform}. However, the knowledge of this Slater point is not a prerequisite for running the IOA algorithm.
\begin{assumption}\label{as:slater_res}
$F(x)$ is $C_F$-Lipschitz, and there exists $x^S \in \mathcal{X}$ s.t.\ $\phi_{\mathsf{SDP}}(x^S)< 0$. 
\end{assumption}
Theorem~\ref{th:asymptoticConvIOA} states the asymptotic convergence of Alg.~\ref{alg:IO} in the case it does not terminate.
\begin{theorem}\label{th:asymptoticConvIOA}
Under Assumptions~\ref{as:convex}, \ref{as:subpbqcqp}, and \ref{as:slater_res}, 
if Alg.~\ref{alg:IO} does not stop then it generates infinite sequences $(x^k)_{k \in \mathbb{N}_{+}}$ and $(\hat{x}^k)_{k \in \mathbb{N}_{+}}$ such that (i) $\lVert x^k - \hat{x}^k \rVert \rightarrow 0$, and (ii) the sequence $(\hat{x}^k)_{n\in\mathbb{N}}$ of feasible points in \eqref{eq:SIP} is a minimizing sequence, i.e., $F(\hat{x}^k)\rightarrow \mathsf{val}\text{\eqref{eq:SIP}}$.

\end{theorem}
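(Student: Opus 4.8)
The plan is to prove that every convergent subsequence of $(x^k,\hat{x}^k)_k$ has identical limits for its two components, and that the common limit is optimal in \eqref{eq:SIP}; claims (i)--(ii) then follow by a compactness argument on the compact set $\mathcal{X}\times\mathcal{X}$.

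First I would record three preliminary facts. Since $x^k$ is feasible in the outer part of \eqref{eq:master_problem_IO}, it satisfies $G(x^k,y^\ell)\le 0$ for all $y^\ell\in\mathcal{Y}^k$, so Lemma~\ref{lem:discretizationconv} gives $\phi(x^k)^+\to 0$; hence every accumulation point of $(x^k)_k$ is feasible in \eqref{eq:SIP}. By Proposition~\ref{prop:phik}, each $\hat{x}^k\in\mathcal{X}\cap\mathcal{R}^k$ is feasible in \eqref{eq:SIP}, so $F(\hat{x}^k)\ge\mathsf{val}\text{\eqref{eq:SIP}}$ and, being also feasible in the outer part, the pair $(\hat{x}^k,\hat{x}^k)$ is admissible for \eqref{eq:master_problem_IO}. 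Comparing the optimal value of \eqref{eq:master_problem_IO} against this admissible pair yields the ``energy'' inequality $\tfrac{\mu_k}{2}\lVert x^k-\hat{x}^k\rVert^2\le F(\hat{x}^k)-F(x^k)$.

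The crux --- and the step I expect to be the main obstacle --- is producing, \emph{inside the current restriction} $\mathcal{R}^k$, a point arbitrarily close to the outer iterate along a subsequence, so as to control $F(\hat{x}^k)$ from above. The difficulty is a one-iteration lag: the valid cut $\langle\mathbf{\mathcal{Q}}(x^k),Y\rangle\le v_k$ generated at $x^k$ enters $\mathcal{R}^{k+1}$, not $\mathcal{R}^k$. I would resolve this as follows. Each $\phi_{\mathsf{SDP}}^{k}$ is convex in $x$ (a supremum of functions affine in $x$), and the cut at $x^\ell$ forces $\phi_{\mathsf{SDP}}^{\ell+1}(x^\ell)\le v_\ell$, while the Slater point satisfies $\phi_{\mathsf{SDP}}^{\ell+1}(x^S)\le\phi_{\mathsf{SDP}}(x^S)=:-\sigma<0$ for every $\ell$. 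Hence the convex combination $\tilde{x}^\ell=(1-t_\ell)x^\ell+t_\ell x^S$ with $t_\ell=\tfrac{(v_\ell)^+}{(v_\ell)^+ +\sigma}$ obeys $\phi_{\mathsf{SDP}}^{\ell+1}(\tilde{x}^\ell)\le 0$, i.e.\ $\tilde{x}^\ell\in\mathcal{X}\cap\mathcal{R}^{\ell+1}$. Since \eqref{eq:oraclebound} gives $(v_\ell)^+\le(1+\delta)\phi(x^\ell)^+\to 0$, we get $t_\ell\to 0$ and $\lVert\tilde{x}^\ell-x^\ell\rVert\to 0$. The restrictions are nested ($\mathcal{R}^k\subseteq\mathcal{R}^{k+1}$, because adding a cut only decreases $\phi_{\mathsf{SDP}}^k$), so along a subsequence $x^{\psi(k)}\to\bar{x}$ the point $\tilde{x}^{\psi(k-1)}$ lies in $\mathcal{R}^{\psi(k)}$ and converges to $\bar{x}$ as well.

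Finally I would pass to the limit. Extract a subsequence along which $x^{\psi(k)}\to\bar{x}$, $\hat{x}^{\psi(k)}\to\bar{\hat{x}}$, $\mu_{\psi(k)}\to\bar{\mu}\in[\underline{\mu},\overline{\mu}]$ and $\tilde{x}^{\psi(k-1)}\to\bar{x}$. The first-order optimality of $\hat{x}^k$ for the convex inner minimization of $F(\cdot)+\tfrac{\mu_k}{2}\lVert x^k-\cdot\rVert^2$ over the convex set $\mathcal{X}\cap\mathcal{R}^k$ reads $F(\hat{x}^k)\le F(w)+\mu_k(\hat{x}^k-x^k)^\top(w-\hat{x}^k)$ for all $w\in\mathcal{X}\cap\mathcal{R}^k$; taking $w=\tilde{x}^{\psi(k-1)}$ and letting $k\to\infty$ gives $F(\bar{\hat{x}})\le F(\bar{x})-\bar{\mu}\lVert\bar{x}-\bar{\hat{x}}\rVert^2$. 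Combined with the limit of the energy inequality, $\tfrac{\bar{\mu}}{2}\lVert\bar{x}-\bar{\hat{x}}\rVert^2\le F(\bar{\hat{x}})-F(\bar{x})$, this forces $\tfrac{3\bar{\mu}}{2}\lVert\bar{x}-\bar{\hat{x}}\rVert^2\le 0$, whence $\bar{x}=\bar{\hat{x}}$ (and $F(\bar{\hat{x}})=F(\bar{x})$). As this holds for every accumulation point, claim (i) $\lVert x^k-\hat{x}^k\rVert\to 0$ follows by compactness. For claim (ii), Lemma~\ref{lem:DirOpt} gives $F(x^k)\le\mathsf{val}\text{\eqref{eq:SIP}}+\mu_k(x^k-\hat{x}^k)^\top(x^*-x^k)$, whose right-hand side tends to $\mathsf{val}\text{\eqref{eq:SIP}}$ as $\lVert x^k-\hat{x}^k\rVert\to 0$; together with $F(\hat{x}^k)\ge\mathsf{val}\text{\eqref{eq:SIP}}$ and the Lipschitz bound $\lvert F(\hat{x}^k)-F(x^k)\rvert\le C_F\lVert x^k-\hat{x}^k\rVert\to 0$, this yields $F(\hat{x}^k)\to\mathsf{val}\text{\eqref{eq:SIP}}$.
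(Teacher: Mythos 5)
Your proof is correct, and it reaches the conclusion by a genuinely different route from the paper. The paper's central step is to prove that $\phi_{\mathsf{SDP}}^k(x^k)^+ \rightarrow 0$: it handles the one-iteration lag you identify by estimating, along a convergent subsequence with $j=\psi(k)$ and $\ell = \psi(k-1)$, $\phi_{\mathsf{SDP}}^{j}(x^{j}) \leq v_{\ell} + \lVert x^{j}-x^{\ell}\rVert \, \lVert\mathcal{Q}\rVert_{op}\, B$ (where $B$ is a Frobenius-norm bound on the SDP feasible set), and it then builds the Slater combination $\bar{x}_k=(1-\omega_k)x^k+\omega_k x^S$ \emph{inside the same restriction} $\mathcal{R}^k$, with weight $\omega_k$ driven by $\phi_{\mathsf{SDP}}^k(x^k)^+$; comparing the optimal pair against $(\bar{x}_k,\bar{x}_k)$ and $(\hat{x}^k,\hat{x}^k)$ yields the quantitative bound $\mu_k\lVert x^k-\hat{x}^k\rVert^2 \leq 2\omega_k\bigl(F(x^S)-F(x^k)\bigr)$ for the \emph{full} sequence, so claim (i) follows without identifying accumulation points and without any variational inequality. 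You instead bypass the statement $\phi_{\mathsf{SDP}}^k(x^k)^+\rightarrow 0$ entirely: you exploit that the cut generated at $x^\ell$ makes $\phi_{\mathsf{SDP}}^{\ell+1}(x^\ell)\leq v_\ell$ hold trivially, form the Slater combination one level later (so that $\tilde{x}^\ell\in\mathcal{R}^{\ell+1}$), and transport it into later restrictions via the nestedness $\mathcal{R}^k\subseteq\mathcal{R}^{k+1}$ --- an observation the paper never needs. The price is that $\tilde{x}^{\psi(k-1)}$ is comparable to $x^{\psi(k)}$ only along convergent subsequences, so you must finish with a compactness/accumulation-point argument and the variational-inequality form of optimality of $\hat{x}^k$, whereas the paper obtains a direct, explicit rate-type control of $\lVert x^k-\hat{x}^k\rVert$ by $\omega_k$ at every iteration. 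Both proofs share the same skeleton otherwise: Lemma~\ref{lem:discretizationconv} for $\phi(x^k)^+\rightarrow 0$, Proposition~\ref{prop:phik} for feasibility of $\hat{x}^k$, the ``energy'' inequality from the feasible pair $(\hat{x}^k,\hat{x}^k)$, and an identical final step for claim (ii) via Lemma~\ref{lem:DirOpt}, Cauchy--Schwarz, and the $C_F$-Lipschitz bound. Your route is somewhat more elementary (no operator-norm or Frobenius-norm estimates, no separate convergence proof for the modified SDP values); the paper's is somewhat stronger in that it quantifies the decay of $\lVert x^k - \hat{x}^k\rVert$ on the whole sequence rather than deducing it by contradiction.
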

\begin{proof} 
We start by proving that $\phi_{\mathsf{SDP}}^k(x^k)^+ \rightarrow 0$. Since $\phi_{\mathsf{SDP}}^k(x^k)^+$ is bounded, there exists at least one accumulation value for this sequence. 
We show that, necessarily, every accumulation point of $\phi_{\mathsf{SDP}}^k(x^k)^+$ is $0$. We take any strictly increasing application $\psi : \mathbb{N} \to \mathbb{N}$, such that $\phi_{\mathsf{SDP}}^{\psi(k)}(x^{\psi(k)})^+ \rightarrow h$. We show that for every $\psi$, $h=0$. From a corollary of Bolzano--Weierstrass theorem \cite[Ex.~2.5.4]{abbott}, we will then conclude that $\phi_{\mathsf{SDP}}^k(x^k)^+ \rightarrow 0$.
    
    Up to the extraction of a subsequence, we can assume, by the compactness of $\mathcal{X}$, that $x^{\psi(k)} \rightarrow x \in \mathcal{X}$. 
    For $k \in \mathbb{N}$, we define $j = \psi(k)$ and $\ell = \psi(k-1)$, and   \vspace{-0.5em}
    \begin{align}
        \phi_{\mathsf{SDP}}^j(x^j) = &\; \phi_{\mathsf{SDP}}^j(x^{\ell}) + \phi_{\mathsf{SDP}}^j(x^j) - \phi_{\mathsf{SDP}}^j(x^{\ell}) \\
                & \leq v_{\ell} + \phi_{\mathsf{SDP}}^j(x^j) - \phi_{\mathsf{SDP}}^j(x^{\ell}),  \label{eq:boundingvl}
    \end{align}
    as the constraint $\langle \mathcal{Q}(x^{\ell}), Y \rangle \leq v_{\ell}$ is enforced in the problem $(\mathsf{SDP}^{j}_{x^{\ell}})$, since $\ell \leq j-1$. 
    We introduce $\tilde{Y}$ the solution of $\mathsf{SDP}^{j}_x$ at $x = x^{j}$ so that, $\langle \mathcal{Q}(x^{j}), \tilde{Y} \rangle =  \phi_{\mathsf{SDP}}^j(x^{j}) $. Since $\tilde{Y}$ is feasible in $\mathsf{SDP}^{j}_x$ at $x = x^{\ell}$, $\langle \mathcal{Q}(x^{\ell}), \tilde{Y} \rangle \leq \phi_{\mathsf{SDP}}^j(x^{\ell})$. Therefore, by linearity of $\mathcal{Q}$, and due to the Cauchy-Schwartz inequality,  $\phi_{\mathsf{SDP}}^j(x^j) - \phi_{\mathsf{SDP}}^j(x^{\ell}) \leq \langle \mathcal{Q}(x^j - x^{\ell}), \tilde{Y} \rangle \leq \lVert  \mathcal{Q}(x^j - x^{\ell}) \rVert_F \: B $ where $B := \max_{Y \in \mathsf{Feas}\text{\eqref{eq:SDP_relax}}} \lVert Y \rVert_F$, which is independent from $j,$ and $\ell$. Indeed, $\tilde{Y}$ is feasible in \eqref{eq:SDP_relax}. By defining the operator norm of $x \mapsto \mathcal{Q}(x)$ as $\lVert \mathcal{Q} \rVert_{op}$, we obtain that $\phi_{\mathsf{SDP}}^j(x^j) - \phi_{\mathsf{SDP}}^j(x^{\ell}) \leq \lVert x^j - x^\ell \rVert \: \lVert \mathcal{Q} \rVert_{op} \: B$. We combine this with Eq.~\eqref{eq:boundingvl}, using the fact that the positive part is non-decreasing, to obtain\vspace{-0.7em}
    \begin{align}
    \phi_{\mathsf{SDP}}^j(x^j)^+  & \leq v_{\ell}^+ + \lVert x^j - x^\ell \rVert \: \lVert \mathcal{Q} \rVert_{op} \: B \\
    & \leq \phi(x^\ell)^+ (1+ \delta) + \lVert x^j - x^\ell \rVert \: \lVert \mathcal{Q} \rVert_{op} \: B,
\end{align}
the second inequality coming from the property of the $\delta$-oracle (see Eq.~\eqref{eq:oraclebound}). Using the definition of $\ell$ and $j$, we obtain $ 0 \leq \phi_{\mathsf{SDP}}^{\psi(k)}(x^{\psi(k)})^+ \leq \phi(x^{\psi(k-1)})^+ (1+ \delta) + \lVert x^{\psi(k)} - x^{\psi(k-1)} \rVert \: \lVert \mathcal{Q} \rVert_{op} \: B.$
Since $\phi(x^{k})^+ \rightarrow 0$ (due to Lemma~\ref{lem:discretizationconv}), and $x^{\psi(k)}$ is converging, we deduce, by taking the limit, that $h = 0$. This holds for any $\psi$, i.e., for any converging subsequence $\phi_{\mathsf{SDP}}^{\psi(k)}(x^{\psi(k)})^+$. We can thus conclude that $\phi_{\mathsf{SDP}}^k(x^k)^+ \rightarrow 0$ \cite[Ex.~2.5.4]{abbott}.

Using Assumption~\ref{as:slater_res}, we introduce a Slater point $x^S \in \mathcal{X}$ such that $\phi_{\mathsf{SDP}}(x^S) = -c$, for $c > 0$. We also introduce $\omega_k := \phi_{\mathsf{SDP}}^k(x^k)^+/(c+\phi_{\mathsf{SDP}}^k(x^k)^+)$. We notice that $\omega_k \rightarrow 0$, since $\phi_{\mathsf{SDP}}^k(x^k)^+ \rightarrow 0$. We define the convex combination $\bar{x}_k = (1-\omega_k) x^k + \omega_k x^S$. We emphasize that $(\bar{x}_k,\bar{x}_k)$ is feasible in problem~\eqref{eq:master_problem_IO} at iteration $k$ since $\mathcal{X}$ is convex, and 
    \begin{itemize}
\item $\bar{x}_k$ satisfies the constraints on $x$, because both $x^k$ and $x^S$ satisfy the convex constraints $G(x,y)$ for $y \in \mathcal{Y}^k$, and, by convex combination, so does $\bar{x}_k$;
    \item $\bar{x}_k$ satisfies the constraints on $\hat{x}$;  indeed, by convexity of $\phi_{\mathsf{SDP}}^k(x)$ (as a maximum of linear functions), the following holds:
     {\small\begin{align}
        \phi_{\mathsf{SDP}}^k(\bar{x}_k) &\leq (1- \omega_k) \phi_{\mathsf{SDP}}^k(x^k) + \omega_k \phi_{\mathsf{SDP}}^k(x^S) \\
        & \leq \frac{c}{c+\phi_{\mathsf{SDP}}^k(x^k)^+} \phi_{\mathsf{SDP}}^k(x^k) + \frac{\phi_{\mathsf{SDP}}^k(x^k)^+}{c+\phi_{\mathsf{SDP}}^k(x^k)^+} \phi_{\mathsf{SDP}}^k(x^S).
    \end{align}}
    Since $\phi_{\mathsf{SDP}}^k(x^S) \leq \phi_{\mathsf{SDP}}(x^S) = - c$ and $\frac{\phi_{\mathsf{SDP}}^k(x^k)^+}{c+\phi_{\mathsf{SDP}}^k(x^k)^+} \geq 0$, we obtain
    {\small\begin{align} 
        \phi_{\mathsf{SDP}}^k(\bar{x}_k)\leq \frac{c}{c+\phi_{\mathsf{SDP}}^k(x^k)^+}\phi_{\mathsf{SDP}}^k(x^k) - \frac{c}{c+\phi_{\mathsf{SDP}}^k(x^k)^+}\phi_{\mathsf{SDP}}^k(x^k)^+,
    \end{align}}
    and thus
    {\small\begin{align} 
        \phi_{\mathsf{SDP}}^k(\bar{x}_k)\leq \frac{c}{c+\phi_{\mathsf{SDP}}^k(x^k)^+} \left(\phi_{\mathsf{SDP}}^k(x^{k}) - \phi_{\mathsf{SDP}}^k(x^{k})^+\right).
    \end{align}}
    
    As $\frac{c}{c+\phi_{\mathsf{SDP}}^k(x^k)^+} \geq 0$ and $\phi_{\mathsf{SDP}}^k(x^{k}) \leq \phi_{\mathsf{SDP}}^k(x^{k})^+$, we deduce that $\phi_{\mathsf{SDP}}^k(\bar{x}_k) \leq 0$, i.e., $\bar{x}_k \in \mathcal{R}^k$.
\end{itemize}
As the objective value of $(\bar{x}_k,\bar{x}_k)$ in the problem~\eqref{eq:master_problem_IO} is $2F(\bar{x}_k)$, by optimality of $(x^k, \hat{x}^k)$: $ F(x^k) +  F(\hat{x}^k) + \frac{\mu_k}{2} \lVert x^k - \hat{x}^k \rVert^2  \leq 2 F( (1-\omega_k) x^k + \omega_k x^S)$, which means, by convexity of $F$, that \vspace{-1em}
\begin{align}
   F(x^k) +  F(\hat{x}^k) + \frac{\mu_k}{2} \lVert x^k - \hat{x}^k \rVert^2  \leq 2(1-\omega_k) F(x^k) + 2\omega_k F(x^S).
\label{eq:start_point_slater}
\end{align}
We also notice that $(\hat{x}^{k},\hat{x}^{k})$ is feasible in the problem~\eqref{eq:master_problem_IO} at iteration $k$, thus $F(x^k) +  F(\hat{x}^k) + \frac{\mu_k}{2} \lVert x^k - \hat{x}^k \rVert^2 \leq  2 F( \hat{x}^{k})$, which means\vspace{-0.7em}
\begin{equation}
     F(x^k)  + \frac{\mu_k}{2} \lVert x^k - \hat{x}^k \rVert^2 \leq  F( \hat{x}^{k}).      \label{eq:observation1}
\end{equation}
Summing Eq.~\eqref{eq:start_point_slater} with Eq.~\eqref{eq:observation1}, we have: $ 2 F(x^k) + \mu_k \lVert x^k - \hat{x}^k \rVert^2 \leq 2(1-\omega_k) F(x^k) + 2\omega_k F(x^S)$, and thus $\mu_k \lVert x^k - \hat{x}^k \rVert^2 \leq 2 \omega_k \left( F(x^S) -  F(x^k) \right)$. Since $0<\underline{\mu} \leq \mu_k$,\vspace{-0.3em}
\begin{equation}
     \lVert x^k - \hat{x}^k \rVert \leq \sqrt{\underline{\mu}^{-1}(2 \omega_k \left( F(x^S) -  F(x^k) \right))}
     \label{eq:observation1bis}
\end{equation}
holds. Since $\omega_k \rightarrow 0$, and $F(x^S) -  F(x^k)$ is bounded, we deduce from Eq.~\eqref{eq:observation1bis} that \vspace{-0.7em}
\begin{equation}
     \lVert x^k - \hat{x}^k \rVert \rightarrow 0.
     \label{eq:almost_conclusion}
\end{equation}

Every point $\hat{x}^k \in \mathcal{R}^{k}$ of the generated sequence is feasible in \eqref{eq:SIP}, as stated in Proposition~\ref{prop:phik}, and $F$ is $C_F$-Lipschitz. Therefore, we have $F(x^*) \leq F(\hat{x}^k) \leq F(x^k) + C_F \lVert x^k - \hat{x}^k\rVert$. According to Lemma~\ref{lem:DirOpt}, we know that $F(x^k) \leq F( x^{*}) + \mu_k (x^k - \hat{x}^k)^\top(x^* - x^k)$, which implies, according to the Cauchy-Schwartz inequality, that 
\begin{equation}
    F(x^*) \leq F(\hat{x}^k) \leq F(x^*) + \mu_k \lVert x^k - \hat{x}^k\rVert \lVert x^* - x^k \rVert + C_F \lVert x^k - \hat{x}^k\rVert.
    \label{eq:encadrement}
\end{equation}
Since $\lVert x^* - x^k \rVert$ is bounded, we deduce from Eq.~\eqref{eq:almost_conclusion}  that $F(x^*) + \mu_k \lVert x^k - \hat{x}^k\rVert \lVert x^* - x^k \rVert + C_F \lVert x^k - \hat{x}^k\rVert \rightarrow F(x^*)$, and thus, $F(\hat{x}^k) \rightarrow F(x^*) =  \mathsf{val}\text{\eqref{eq:SIP}}$.
\end{proof}
We move now to the case where the IOA Algorithm terminates in a finite number of iterations: Theorem~\ref{th:finiteTermIO} studies a sufficient condition for such a finite termination, and Theorem~\ref{th:termIOA-gap} studies the suboptimality of the returned feasible point.
\begin{theorem}\label{th:finiteTermIO}
Under Assumptions~\ref{as:convex}, \ref{as:subpbqcqp}, and \ref{as:slater_res}, if $\epsilon_1>0$ and $ \epsilon_2 > 0$ then Alg.~\ref{alg:IO} terminates after a finite number of iterations.
 \end{theorem}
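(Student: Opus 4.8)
The plan is to argue by contradiction. Suppose Alg.~\ref{alg:IO} never meets its stopping criterion, so that it generates infinite sequences $(x^k)_{k\in\mathbb{N}}$ and $(\hat{x}^k)_{k\in\mathbb{N}}$. Recall that the \texttt{while} loop exits precisely when both $\nu^{k}_1 = G(x^{k-1},y^{k-1}) \leq \epsilon_1$ and $\nu^{k}_2 = \lVert x^{k-1}-\hat{x}^{k-1}\rVert \leq \epsilon_2$ hold at the same index. Hence it suffices to exhibit an iteration index at which \emph{both} indicators $G(x^k,y^k)$ and $\lVert x^k - \hat{x}^k\rVert$ fall below their respective positive thresholds; since $\epsilon_1,\epsilon_2>0$, this would trigger termination and contradict the non-termination assumption.

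For the second indicator, I would directly invoke Theorem~\ref{th:asymptoticConvIOA}: under Assumptions~\ref{as:convex}, \ref{as:subpbqcqp}, and \ref{as:slater_res}, a non-terminating run satisfies $\lVert x^k - \hat{x}^k\rVert \to 0$. Therefore there exists $K_2$ such that $\lVert x^k-\hat{x}^k\rVert \leq \epsilon_2$ for all $k\geq K_2$.

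For the first indicator, the key is to control the feasibility error via Lemma~\ref{lem:discretizationconv}. By construction of the master problem~\eqref{eq:master_problem_IO}, the iterate $x^k$ satisfies the constraints $G(x^k,y^\ell)\leq 0$ for every $\ell\in\{0,\dots,k-1\}$, since the set $\mathcal{Y}^k=\{y^0,\dots,y^{k-1}\}$ carries exactly these cuts. Thus condition $(*)$ of Lemma~\ref{lem:discretizationconv} holds, and we conclude that the feasibility error $\phi(x^k)^+ \to 0$. Because $y^k\in\mathcal{Y}$ gives $G(x^k,y^k)\leq \phi(x^k)$, and because the positive part is non-decreasing, we obtain $G(x^k,y^k)^+ \leq \phi(x^k)^+ \to 0$. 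Hence there exists $K_1$ such that $G(x^k,y^k)\leq \epsilon_1$ for all $k\geq K_1$.

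Taking $K=\max(K_1,K_2)$, both stopping tests pass at the check preceding iteration $K+1$, i.e., $\nu^{K+1}_1=G(x^K,y^K)\leq\epsilon_1$ and $\nu^{K+1}_2=\lVert x^K-\hat{x}^K\rVert\leq\epsilon_2$, so the loop exits, contradicting the assumption that the algorithm runs forever. Almost all the work is thus carried by Theorem~\ref{th:asymptoticConvIOA} and Lemma~\ref{lem:discretizationconv}; there is no serious analytic obstacle here. The only point that requires care is that the two stopping indicators vanish \emph{along the full sequence} (not merely along separate subsequences), so that both inequalities $\leq\epsilon_1$ and $\leq\epsilon_2$ can be guaranteed at a common index $K$, which is what makes the conjunctive stopping criterion trigger.
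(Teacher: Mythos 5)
Your proof is correct and follows essentially the same route as the paper's: both arguments assume non-termination and combine Theorem~\ref{th:asymptoticConvIOA} (giving $\lVert x^k - \hat{x}^k\rVert \to 0$) with Lemma~\ref{lem:discretizationconv} (giving $G(x^k,y^k)^+ \leq \phi(x^k)^+ \to 0$, using that the master problem~\eqref{eq:master_problem_IO} enforces the accumulated cuts so that condition $(*)$ holds), concluding that both stopping tests are eventually satisfied at a common index. The only difference is presentational: the paper states it as a contrapositive ending with $\min\{\epsilon_1,\epsilon_2\}=0$, while you state it as a direct contradiction, which is logically equivalent.
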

\begin{proof}
We reason by contrapositive: we suppose that Alg.~\ref{alg:IO} does not stop, i.e., generates two infinite sequences $(x^k)_{k \in \mathbb{N}_{+}}$ and $(\hat{x}^k)_{k \in \mathbb{N}_{+}}$, and we show that $\epsilon_1 =0$ or $\epsilon_2 = 0$. In the one hand, as the stopping criterion is not met, for all $k \in\mathbb{N}$, $G(x^k,y^k) > \epsilon_1$ (and therefore $G(x^k,y^k)^+ > \epsilon_1$), or $\lVert x^k - \hat{x}^k \rVert > \epsilon_2$. In other words, $\max \{ G(x^k,y^k)^+ - \epsilon_1, \lVert x^k - \hat{x}^k \rVert - \epsilon_2 \} >0$. In the other hand, we deduce from Lemma~\ref{lem:discretizationconv} that $G(x^k,y^k)^+ \leq \phi(x^k)^+ \rightarrow 0$, and from Theorem~\ref{th:asymptoticConvIOA}  that $\lVert x^k - \hat{x}^k \rVert \rightarrow 0$. By continutuity of the max operator, we deduce that $\max \{ - \epsilon_1, - \epsilon_2 \} \geq 0$, i.e. $\min \{ \epsilon_1, \epsilon_2 \} \leq 0$. The scalars $\epsilon_1$ and $\epsilon_2$ being nonnegative by definition, we deduce that $\min \{ \epsilon_1, \epsilon_2 \} = 0$.
\end{proof}
 
Theorem~\ref{th:termIOA-gap} provides an upper bound on the optimality gap of the returned feasible point in case of finite termination. We use the notation $\mathsf{diam}(\mathcal{X}) := \underset{x_1, x_2 \in \mathcal{X}}{\max} \lVert x_1 - x_2\rVert$.
\begin{theorem} \label{th:termIOA-gap}
    Under Assumptions~\ref{as:convex}, \ref{as:subpbqcqp}, and \ref{as:slater_res}, if Alg.~\ref{alg:IO} terminates after iteration $K$, then it returns a feasible iterate $\hat{x}^K$ s.t.\ $F(\hat{x}^K) \leq \mathsf{val}\text{\eqref{eq:SIP}} + \epsilon_2 (\mu_K \mathsf{diam}(\mathcal{X}) + C_F).$
\end{theorem}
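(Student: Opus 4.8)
The plan is to combine three facts about the terminal iterate $(x^K,\hat{x}^K)$: the feasibility of $\hat{x}^K$, the near-optimality estimate of Lemma~\ref{lem:DirOpt}, and the termination condition that controls $\lVert x^K - \hat{x}^K\rVert$. Since Alg.~\ref{alg:IO} stops after iteration $K$, the while-loop condition fails at the subsequent check, so in particular $\nu_2^{K+1} = \lVert x^K - \hat{x}^K\rVert \leq \epsilon_2$. Moreover, by Proposition~\ref{prop:phik}, the returned point $\hat{x}^K \in \mathcal{X} \cap \mathcal{R}^K$ is feasible in \eqref{eq:SIP}, which justifies the claim that the iterate is feasible and yields the trivial lower bound $F(\hat{x}^K) \geq \mathsf{val}\text{\eqref{eq:SIP}}$.

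To bound $F(\hat{x}^K)$ from above, I would first pass from $\hat{x}^K$ to $x^K$ using the $C_F$-Lipschitz continuity of $F$ (Assumption~\ref{as:slater_res}): $F(\hat{x}^K) \leq F(x^K) + C_F\lVert x^K - \hat{x}^K\rVert$. Then I would control $F(x^K)$ via Lemma~\ref{lem:DirOpt}, which gives $F(x^K) \leq F(x^*) + \mu_K (x^K - \hat{x}^K)^\top(x^* - x^K)$, where $x^*$ denotes an optimal solution of \eqref{eq:SIP}, so that $F(x^*) = \mathsf{val}\text{\eqref{eq:SIP}}$. Applying the Cauchy-Schwartz inequality to the inner product and bounding $\lVert x^* - x^K\rVert \leq \mathsf{diam}(\mathcal{X})$ (since both $x^*$ and $x^K$ lie in $\mathcal{X}$), I obtain $F(x^K) \leq \mathsf{val}\text{\eqref{eq:SIP}} + \mu_K \,\mathsf{diam}(\mathcal{X})\,\lVert x^K - \hat{x}^K\rVert$.

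Finally, chaining the two displays yields $F(\hat{x}^K) \leq \mathsf{val}\text{\eqref{eq:SIP}} + \lVert x^K - \hat{x}^K\rVert\bigl(\mu_K\,\mathsf{diam}(\mathcal{X}) + C_F\bigr)$, and substituting the termination bound $\lVert x^K - \hat{x}^K\rVert \leq \epsilon_2$ delivers the claimed estimate. The argument is essentially an assembly of already-established results, so I do not expect a genuine obstacle; the only points that require care are the index bookkeeping — verifying that termination at iteration $K$ really forces $\lVert x^K - \hat{x}^K\rVert \leq \epsilon_2$ for the returned pair rather than some earlier iterate — and ensuring the Cauchy-Schwartz step is applied to the correct pair of vectors, so that exactly $\mathsf{diam}(\mathcal{X})$ (and not a larger quantity) appears in the final constant.
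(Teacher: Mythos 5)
Your proof is correct and follows essentially the same route as the paper's: Lemma~\ref{lem:DirOpt} plus Cauchy--Schwartz to bound $F(x^K)$, the $C_F$-Lipschitz property to pass to $F(\hat{x}^K)$, and the termination condition $\lVert x^K - \hat{x}^K\rVert \leq \epsilon_2$ together with $\lVert x^* - x^K\rVert \leq \mathsf{diam}(\mathcal{X})$ to conclude. Your explicit invocation of Proposition~\ref{prop:phik} for the feasibility of $\hat{x}^K$ and the careful index bookkeeping for the stopping test are points the paper leaves implicit, but the argument is the same.
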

\begin{proof}
From Lemma~\ref{lem:DirOpt}, and from Cauchy-Schwartz inequality, we know that $F(x^K) \leq F(x^*) + \mu_k(x^K-\hat{x}^K)^\top (x^* - x^K) \leq F(x^*) + \mu_k \lVert x^K-\hat{x}^K \rVert \: \lVert x^* - x^K \rVert$. Using that $F$ is $C_F$-Lipschitz, we obtain  $F(\hat{x}^K) \leq F(x^*) + \mu_K \lVert x^K-\hat{x}^K \rVert \: \lVert x^* - x^K \rVert + C_F \lVert x^K-\hat{x}^K \rVert$. We note that $\lVert x^K - \hat{x}^K\rVert \leq \epsilon_2$ and $\lVert x^* - x^K \rVert \leq  \mathsf{diam}(\mathcal{X})$ to conclude.
\end{proof}

\section{Conclusion}
In this paper, we address the issue of solving a convex Semi-Infinite Programming problem despite the difficulty of the separation problem. We proceed by allowing for the approximate solution of the separation problem, up to a given relative optimality gap. We see that, in the case where the objective function is strongly convex, the Cutting-Planes algorithm has guaranteed theoretical performance despite the inexactness of the oracle: it converges in $O(1/k)$. Contrary to the Cutting-Planes algorithm, the Inner-Outer Approximation algorithm generates a sequence of feasible points converging towards the optimum of the Semi-Infinite Programming problem. This paper shows that this is also the case when the separation problem is a Quadratically Constrained Quadratic Programming problem despite the inexactness of the oracle. An avenue of research is to extend these results to the setting of Mixed-Integer Convex Semi-Infinite Programming.
\vspace{1em}

\noindent
{\small\textbf{Data availability statement:} 
Data sharing not applicable to this article as no datasets were generated or analyzed during the current study.}

\noindent
{\small\textbf{Acknowledgment:} 
The research of M.\ Cerulli was partially supported by project ``SEcurity and RIghts in the CyberSpace'' SERICS (PE00000014) under the MUR National Recovery and Resilience Plan funded by the European Union - NextGenerationEU.}

\bibliography{convex-sip-inexact}

\begin{appendices}
\section{}
\subsection{Notations}\label{app:notation}
We summarize the notations used throughout the paper in Table~\ref{tab:notation}.
\begin{table}[!ht]
    \centering
    \scalebox{0.9}{\begin{tabular}{|l|l|}
    \hline
        \rowcolor[HTML]{C0C0C0}\textbf{Symbol} & \textbf{Description} \\ \hline
        \rowcolor{black!10} $\mathbb{R} = (-\infty,+\infty)$ & the set of real numbers \\ \hline
        $\mathbb{R}_+ = [0,+\infty)$ & the set of non-negative real numbers \\ \hline
        \rowcolor{black!10} $\mathbb{R}_{++} = (0,+\infty)$ & the set of positive real numbers \\ \hline
        $\mathbb{R}^n$ & set of real $n$-dimensional vectors \\ \hline
        \rowcolor{black!10} $\mathbb{Z} = \{\dots,-2,-1,0,1,2,\dots\}$ & the set of integer numbers \\ \hline
        $\mathbb{N} = \{0,1,2,3,\dots\}$ & the set of natural numbers including $0$ \\ \hline
        \rowcolor{black!10} $\mathbb{N}^+ = \{1,2,3,\dots\}$ & the set of natural numbers without $0$ \\ \hline
        $\mathbb{S}_n$ & the set of symmetric $n \times n$ matrices \\ \hline
        \rowcolor{black!10} $\mathbb{S}_n^+$ & the set of positive semidefinite $n \times n$ matrices \\ \hline
        $\langle A,B \rangle = \mathsf{Tr}(A^\top B)$ & the Frobenius inner product of two square matrices $A$ and $B$ with the same size \\ \hline
        \rowcolor{black!10} $\mathsf{conv}(S)$ & the convex-hull of the set $S,$ i.e., the smallest convex set that contains $S$ \\ \hline
        \rowcolor{black!10} $\mathsf{cone}(S)$ & the conic-hull of the set $S,$ i.e., the smallest convex cone that contains $S$ \\ \hline
        $\mathsf{diag}(a)$ & the diagonal $n\times n$ matrix having vector $a = (a_1,\dots,a_n)$ as diagonal \\ \hline
        \rowcolor{black!10}  $\lVert a \rVert$ & the Euclidean norm of vector $a = (a_1,\dots,a_n)$, i.e. $\sqrt{a_1^2 + \dots + a_n^2}$ \\ \hline
    \end{tabular}}\label{tab:notation}
    \caption{List of symbols.}
\end{table}

\subsection{Proof of Lemma~\ref{lem:smoothness}}\label{app:smoothness}
We have that: \begin{inparaenum}[(i)] \item the set $\mathcal{X}$ is compact, \item the function $-\mathcal{L}(\cdot,z)$ is continuous for all $z \in \mathbb{R}^m$, \item the function $-\mathcal{L}(x,\cdot)$ is convex and differentiable for all $x \in \mathcal{X}$, \item the function  $\sup_{x\in \mathcal{X}} -\mathcal{L}(x,\cdot)$ (with $x$ function of $z$) is finite-valued over $\mathbb{R}^m$, and \item due to Assumptions~\ref{as:convex} and \ref{as:strongconvex}, the function $-\mathcal{L}(\cdot,z)$ is strongly concave and the set $\mathcal{X}$ is compact and convex, therefore the supremum $\sup_{x\in \mathcal{X}} -\mathcal{L}(x,z)$ is attained for a unique $x(z)$. \end{inparaenum} 
In this setting, we deduce from \cite[Cor.~VI.4.4.5]{hiriart2013convex} that $\theta(z)$ is differentiable over $\mathbb{R}^m$, with gradient $\nabla_z \mathcal{L}(x(z),z) = x(z)$. 

We now take $z,z' \in \mathbb{R}^m$, and prove that $\lVert \nabla \theta(z) - \nabla \theta(z') \rVert \leq \frac{1}{\mu} \lVert z - z'\rVert$. We define the functions $w(u) = \mathcal{L}(u,z) + {i}_\mathcal{X}(u)$ and $w'(u) = \mathcal{L}(u,z') + {i}_\mathcal{X}(u)$, where $i_\mathcal{X}(\cdot)$ is the characteristic function of $\mathcal{X}$.  We introduce $x$ (resp. $x'$) the unique minimum of $w$ (resp. $w'$). The first-order optimality condition for these convex functions reads \vspace{-0.8em}
\begin{equation}
 0 \in \partial w (x),\quad 0 \in \partial w' (x').  \label{eq:foc} \vspace{-0.5em}
\end{equation}
We notice that the function $(F + i_\mathcal{X})(u)$ is convex due to Assumption~\ref{as:convex}, and the function $\ell(u) = z^\top u$ is linear and thus convex. The intersection of the relative interiors of the domains of
these convex functions is $\mathsf{ri}(\mathcal{X})$. With $\mathcal{X}$ a finite-dimensional convex set, $\mathsf{ri}(\mathcal{X}) \neq \emptyset$, according to \cite[Prop.~1.9]{tuy}. Hence, the subdifferential of the sum is the sum of the subdifferentials \cite[Th.~23.8]{rockafellar1970}, i.e., $\partial w(x) = \partial (F + i_\mathcal{X})(x) + \partial \ell (x) =\partial (F + i_\mathcal{X})(x) + z$. Similarly,  $\partial w'(x') = \partial (F + i_\mathcal{X})(x') +  z'$. Therefore, Eqs.~\eqref{eq:foc} may be rephrased as the existence of $s \in \partial (F + i_\mathcal{X})(x)$ and $s' \in \partial (F + i_\mathcal{X})(x')$ such that \vspace{-0.8em}
\begin{equation}
 0 = s + z, \quad 0 = s'+z'. \label{eq:subdiff} \vspace{-0.5em}
\end{equation}
Due to Assumptions~\ref{as:convex} and \ref{as:strongconvex}, the function $F + i_\mathcal{X}$ is $\mu$-strongly convex. Applying  \cite[Th.~VI.6.1.2]{hiriart2013convex}, the $\mu$-strong convexity of $F + i_\mathcal{X}$ gives that $     (s-s')^\top(x -x' ) \geq \mu \lVert x - x' \rVert^2$, since $s \in \partial (F + i_\mathcal{X})(x)$ and $s' \in \partial (F + i_\mathcal{X})(x')$. Using the Cauchy-Schwartz inequality and Eqs.~\eqref{eq:subdiff}, we deduce that $\lVert z - z' \rVert \: \lVert x - x' \rVert \geq \mu \lVert x - x' \rVert^2$. 
Since $\nabla \theta (z) = x$ and  $\nabla \theta (z') = x'$ (following from the first part of this proof):
\begin{align}
    \lVert z - z' \rVert \: \lVert \nabla\theta(z) - \nabla\theta(z') \rVert  \geq \mu \lVert \nabla\theta(z) - \nabla\theta(z') \rVert^2. \label{eq:almostconclude}
\end{align}
From Eq.~\eqref{eq:almostconclude}, $ \lVert \nabla\theta(z) - \nabla\theta(z') \rVert \leq \frac{1}{\mu} \lVert z - z' \rVert$, if $\lVert \nabla\theta(z) - \nabla\theta(z') \rVert > 0$. If $\lVert \nabla\theta(z) - \nabla\theta(z') \rVert =0$, this inequality is also trivially true.

%-------------------------------------------------
%-------------------------------------------------
\begin{comment}\section{Proof of Lemma~\ref{lem:progress}}\label{app:progress}
For any $y,z\in\mathcal{K}$ and $\gamma \geq 0$, we obtain by integration that 
{\small \begin{equation}
    \theta(z + \gamma y) - \theta(z) = \int_{0}^\gamma  \nabla \theta(z+ t y)^\top y \: dt = \gamma \nabla \theta(z)^\top y  + \int_{0}^\gamma   (\nabla \theta(z+ t y) - \nabla \theta(z))^\top y \: dt.
    \label{eq:integration}
\end{equation}}
Using the Cauchy-Schwarz inequality and the $\frac{1}{\mu}$-smoothness of $\theta$ (according to Prop.~\ref{lem:smoothness}), we know that 
{\small
\begin{align}
   (\nabla \theta(z+ty) - \nabla \theta(z))^\top y   \geq - \lVert {\nabla}\theta(z+ty)  - \nabla \theta(z) \rVert_2  \: \lVert y \rVert_2   \geq -  \frac{t}{\mu} \lVert y \rVert_2^2.
\end{align}
   }
Combining this with Eq.~\eqref{eq:integration}, we deduce that {\small$
    \theta(z+ty) - \nabla \theta(z)  \geq {\gamma}\langle \nabla \theta (z), y \rangle - \int_{t = 0}^\gamma   \frac{t}{\mu} \lVert y \rVert_2^2 {dt}$},
which yields that {\small$\theta(z + \gamma y)- \theta(z)  \geq \gamma \langle \nabla \theta (z), y \rangle - \frac{ \lVert y \rVert^2}{2\mu} \gamma^2$}. 
\end{comment}
%-------------------------------------------------
%-------------------------------------------------
\subsection{Proof of Lemma~\ref{lem:dualopti}}\label{app:dualopti}
According to Assumption~\ref{as:slater}, we introduce $\hat{x} \in \mathcal{X}$ such that $\hat{x}^\top a(y) < 0$ for all $y \in \mathcal{Y}$. By continuity of $G$ and compactness of $\mathcal{Y}$, we know that there exists a constant $c > 0$ such that $\hat{x}^\top a(y) \leq -c$ for all $y \in \mathcal{Y}$. For every $z \in \mathcal{K}$, there exist $r \in \mathbb{N}$, $y_1, \dots, y_r \in \mathcal{Y}$, and $\lambda_1, \dots, \lambda_r \in \mathbb{R}_{++}$ such that $z = \sum_{i=1}^p \lambda_i a(y_i)$. By definition of $\theta(z)$, $\theta(z) \leq \mathcal{L}(\hat{x},z)$. We deduce that $\theta(z) \leq F(\hat{x}) - c \sum_{i=1}^p \lambda_i$, which also reads $\sum_{i=1}^p \lambda_i  \leq c^{-1}(F(\hat{x}) - \theta(z)).$
For this equation, we deduce that for every feasible point of \eqref{eq:dualsip} such that $\theta(z) \geq V -1$ where $V = \mathsf{val} \text{\eqref{eq:SIP}} =  \mathsf{val} \text{\eqref{eq:dualsip}}$, we have $\sum_{i=1}^p \lambda_i  \leq c^{-1}(F(\hat{x}) - V + 1),$ 
and this holds for every decomposition $z =\sum_{i=1}^p \lambda_i a(y_i)$. In particular, the set of $z \in \mathcal{K}$, such that $\theta(z) \geq V -1$ is included in the compact set $q \mathsf{conv}(\mathcal{M})$, where $q = c^{-1}(F(\hat{x}) - V + 1)$. Over this compact set, the continuous function $\theta(z)$ reaches a maximum.

\subsection{Proof of Lemma~\ref{lem:strongdualityk}}\label{app:strongdualityk}
We notice that the problems pair \eqref{eq:Rk}-\eqref{eq:Dk} is a particular case of the generic problems pair \eqref{eq:lsip}-\eqref{eq:dualsip} when instantiating  ${\mathcal{M}} \gets \mathcal{M}^k$. The equality $\mathsf{val}\eqref{eq:Rk} = \mathsf{val}\eqref{eq:Dk}$ is thus the application of Eq.~\eqref{eq:duality} in this case. The existence of an optimal solution $x^k$ in \eqref{eq:Rk} follows from the compactness of $\mathcal{X}$, the continuity of $F(x)$, and the existence of a feasible point due to Assumption~\ref{as:slater}. The existence of an optimal solution $z^k$ to \eqref{eq:Dk} is a direct application of Lemma~\ref{lem:dualopti}, which is applicable since \eqref{eq:Rk}-\eqref{eq:Dk} also satisfies Assumption~\ref{as:slater}. Indeed, $\hat{x}^\top z < 0$ for all $z \in \mathcal{M}$, so $\hat{x}^\top z < 0$ for all $z \in \mathsf{conv}(\mathcal{M})$; as $\mathcal{M}^k \subset \mathsf{conv}(\mathcal{M})$ by construction, we deduce that $\hat{x}^\top z < 0$ for all $z \in \mathcal{M}^k$.
By definition of $\theta$, $\mathcal{L}(x,z^k) \geq \theta(z^k)$ for all $x \in \mathcal{X}$. We also know that $\mathcal{L}(x^k,z^k) = \theta(z^k)$ since $\mathsf{val}\eqref{eq:Rk} = F(x^k) \geq \mathcal{L}(x^k,z^k) \geq \theta(z^k) = \mathsf{val} \eqref{eq:Dk} = \mathsf{val}\eqref{eq:Rk}$, which means $x^k = \arg\min\limits_{x \in \mathcal{X}} \mathcal{L}(x,z^k)$. Lemma~\ref{lem:smoothness} gives $\nabla \theta (z^{k}) = \arg\min\limits_{x \in \mathcal{X}} \mathcal{L}(x,z^k)$, thus $\nabla \theta (z^{k}) = x^k$.

\subsection{Proof of Lemma~\ref{lem:qcqpsdp}}\label{app:qcqpsdp}
We underline that, due to Assumption~\ref{as:subpbqcqp}, the constraint $\lVert y \rVert^2 + 1 \leq 1 + \rho^2 $ is redundant in the problem \eqref{eq:QCQP}, i.e., adding it does not change the value of this problem. Consequently, we recognize that \eqref{eq:SDP_relax} is the standard Shor SDP relaxation of the problem \eqref{eq:QCQP} augmented with this redundant constraint, this is why $\mathsf{val} \text{\eqref{eq:SDP_relax}} \geq \mathsf{val} \text{\eqref{eq:QCQP}}$. \\
We assume now that $Q(x), Q^1, \dots, Q^r$ are PSD. Given a matrix $Y$ feasible for \eqref{eq:SDP_relax}, we denote by $u_1, \dots, u_{n+1}$ $\in \mathbb{R}^{n+1}$ a basis of eigenvectors of $Y$ (which is PSD) and their respective eigenvalues $v_1, \dots, v_{n+1}$  $\in \mathbb{R}_+$. Let us introduce the two following index sets: $I = \{ i \in \{ 1, \dots, n+1 \} : (u_i)_{n+1} \neq 0 \} \text{ and }  J = \{ i \in\{ 1, \dots, n+1 \} : (u_i)_{n+1} = 0 \}.$ We have then $I \cup J = \{ 1, \dots, n+1 \}$. Moreover, 
\begin{itemize}
\item if $i \in I$ : we define the nonnegative scalar $\mu_i = v_i \: (u_i)_{n+1}^2$ and $y_i \in \mathbb{R}^n$ s.t.\ $u_i = (u_i)_{n+1} \begin{pmatrix} y_i \\ 1
    \end{pmatrix}$ 
    \item if $i \in J$ : we define the nonnegative scalar $\nu_i = v_i$ and $z_i \in \mathbb{R}^n$ s.t.\ $u_i = \begin{pmatrix}
z_i \\
0
\end{pmatrix}$.
\end{itemize}
With this notation, we have that 
{\small$$Y = \sum\limits_{i = 1}^{n+1} v_i u_i u_i^\top  =\sum\limits_{i \in I} \mu_i \begin{pmatrix} y_i y_i^\top & y_i \\ y_i^\top & 1 
    \end{pmatrix} + \sum\limits_{i \in J} \nu_i \begin{pmatrix} z_i z_i^\top & \mathbf{0} \\ \mathbf{0}^\top & 0 
    \end{pmatrix},$$}where $\mathbf{0}$ is the null $n$-dimensional vector (whereas ${0_n}$ is the $n \times n$ null matrix). Let us define the vector $\bar{y} = \sum\limits_{i \in I} \mu_i y_i$. Its objective value in \eqref{eq:QCQP} is larger than the objective value of $Y$ in \eqref{eq:SDP_relax}, since
{\small\begin{align}
     \langle \mathcal{Q}(x) , Y \rangle  &= \sum\limits_{i \in I} \mu_i G(x,y_i) - \frac{1}{2}\sum\limits_{i \in J} \nu_iz_i^\top Q(x) z_i  \\ & \leq \sum\limits_{i \in I} \mu_i G(x,y_i)  \\ &\leq   G(x,\sum\limits_{i \in I} \mu_i y_i) = G(x,\bar{y}).
      \label{eq:step1}
\end{align}}
The first inequality is due to $Q(x) \succeq 0$ and $\nu_i \geq 0$. The second inequality derives from $\sum_{i \in I} \mu_i = Y_{n+1,n+1} = 1$, and from the concavity of the function $G(x,\cdot)$ (Jensen inequality). Similarly, knowing that $Q^j$ is PSD and that $Y$ is feasible in \eqref{eq:SDP_relax}, we can show that $\frac{1}{2} \bar{y}^\top Q^j \bar{y} +  (q^j) ^\top \bar{y} + b_j  \leq \langle \mathbf{\mathcal{Q}}^j, Y \rangle \leq 0$, which means that $\bar{y}$ is feasible in \eqref{eq:QCQP}. This implies that $\langle \mathcal{Q}(x),Y \rangle \leq G(x,\bar{y}) \leq \mathsf{val}\text{\eqref{eq:QCQP}}$. This being true for any matrix $Y$ feasible in \eqref{eq:SDP_relax}, we conclude that $\mathsf{val}\text{\eqref{eq:SDP_relax}}  \geq \mathsf{val} \text{\eqref{eq:QCQP}}$. This proves that $\mathsf{val}\text{\eqref{eq:SDP_relax}}  = \mathsf{val}\text{\eqref{eq:QCQP}}$.

\subsection{Proof of Lemma~\ref{lem:strong_duality}}\label{app:strong_duality}
The Lagrangian of problem \eqref{eq:SDP_relax} is defined over $Y \in  \mathbb{S}_{n+1}^+$, $\lambda \in \mathbb{R}_+^r, \alpha \in \mathbb{R}_+, \beta\in\mathbb{R}$ and reads
{\small$
        L_x(Y, \lambda, \alpha, \beta) = \langle \mathcal{Q}(x) , Y \rangle
         - \sum\limits_{j=1}^r \lambda_j \langle \mathcal{Q}^j, Y \rangle
         + \alpha ( 1 + \rho^2 - \langle I_{n+1}, Y \rangle) 
         + \beta (1 -  \langle E, Y \rangle)  \vspace*{-0.2em} = \alpha (1+\rho^2) + \beta + \langle \mathbf{\mathcal{Q}}(x)  - \sum\limits_{j=1}^r  \lambda_j \mathbf{\mathcal{Q}}^j  - \alpha I_{n+1} - \beta E, Y \rangle.$}
         
The Lagrangian dual problem of \eqref{eq:SDP_relax} is {\small $\min\limits_{\lambda,\alpha,\beta} \, \sup\limits_{Y} \,L_x(Y, \lambda, \alpha, \beta)$}, i.e.,\vspace{-0.8em}
\begin{equation*}
    \underset{\begin{subarray}{c} \lambda \in \mathbb{R}^r_+ \\ \alpha \in \mathbb{R}_+ \\ \beta \in \mathbb{R} \end{subarray}}{\min}  \alpha(1+\rho^2) + \beta + \sup\limits_{Y \in \mathbb{S}_{n+1}^+} \langle \mathbf{\mathcal{Q}}(x)  - \sum\limits_{j=1}^r  \lambda_j \mathbf{\mathcal{Q}}^j  - \alpha I_{n+1} - \beta E, Y \rangle. \vspace{-0.8em}
\end{equation*}

We recognize that the supremum is $+\infty$, unless $\sum\limits_{j=1}^r  \lambda_j \mathbf{\mathcal{Q}}^j  + \alpha I_{n+1} + \beta E \succeq \mathbf{\mathcal{Q}}(x)$. 
This proves that the dual problem of \eqref{eq:SDP_relax} can be formulated as \eqref{eq:DSDP}. We prove now that the Slater condition, which is a sufficient condition for strong duality ($\mathsf{val}\text{\eqref{eq:SDP_relax}} = \mathsf{val}\text{\eqref{eq:DSDP}}$) \cite[p.~265]{boyd2004convex}, holds for the dual problem \eqref{eq:DSDP}. We denote by $m_x$ the maximum eigenvalue of $\mathcal{Q}(x)-\sum\limits_{j=1}^r \mathbf{\mathcal{Q}}^j$, and we notice that $(\lambda,\alpha,\beta) = (1, \dots, 1, \max\{1 + m_x, 1\}, 0)$ is a strictly feasible point of \eqref{eq:DSDP}. Hence, the Slater condition holds.

\subsection{Proof of Lemma~\ref{lem:DirOpt}}\label{app:DirOpt}
We analyze the variation of the objective function w.r.t.\ the variable $x$. Since $x^* \in \mathcal{X}$ is a feasible value for variable $x$, the direction $h = x^* - x^k$ is admissible at $x^k$ in the problem~\eqref{eq:master_problem_IO}. As $F(x)$ is convex over $\mathbb{R}^n$, the directional derivative $F'(x^k,h) = \lim\limits_{t\rightarrow 0^+} \frac{F(x^k + t h) - F(x^k)}{t}$ is well-defined. By optimality of $x^k$, the directional derivative of function $F(x) + \frac{\mu_k}{2} \lVert x - \hat{x}^k \rVert^2$ in the direction $h$ is non-negative, i.e., $F'(x^k,h) + \mu_k (x^k- \hat{x}^k)^\top h \geq 0$. By convexity of $F(x)$, $F(x^*) - F(x^k) \geq F'(x^k,h)$. Combining this with the previous inequality yields $F(x^k) \leq F( x^{*}) + \mu_k (x^k - \hat{x}^k)^\top (x^* - x^k)$.

\subsection{Poof of Lemma~\ref{lem:discretizationconv}}\label{app:discretizationconv}
    Let $t^+ = \max \{ t,0 \}$ be the positive part of function $t$. We notice that the sequence $\phi(x^k)^+$ is bounded, and thus admits at least one accumulation value $\ell$. We are going to prove that $\ell = 0$.
    Let $\psi:\mathbb{N} \to \mathbb{N}$ be any increasing function, such that $\phi(x^{\psi(k)})^+ \rightarrow \ell$. 
    By compactness of $\mathcal{X}$ (resp. $\mathcal{Y}$), we also assume that $x^{\psi(k)} \rightarrow x \in \mathcal{X}$ (resp. $y^{\psi(k)} \rightarrow y \in \mathcal{Y}$). For $(*)$, $G(x^{\psi(k)},y^j) \leq 0$ for all $j \in \{0,\dots, \psi(k) - 1\}$; in particular, $G(x^{\psi(k)},y^{\psi(k-1)}) \leq 0$.
    We deduce that $G(x^{\psi(k)},y^{\psi(k)}) \leq  G(x^{\psi(k)},y^{\psi(k)}) - G(x^{\psi(k)},y^{\psi(k-1)}),$ and therefore, since the positive part of a function is non-decreasing,\vspace{-0.6em}
\begin{align}
     \Bigl( G(x^{\psi(k)},y^{\psi(k)})\Bigr)^+ \leq  \Bigl( G(x^{\psi(k)},y^{\psi(k)}) - G(x^{\psi(k)},y^{\psi(k-1)})\Bigr)^+.
     \label{eq:diffremark}
\end{align}
 According to the definition of the $\delta$-oracle, Eqs.~\eqref{eq:oraclebound} yields $\phi(x^{\psi(k)}) - G(x^{\psi(k)},y^{\psi(k)}) \leq \delta \lvert \phi(x^{\psi(k)}) \rvert$. If $\phi(x^{\psi(k)}) \geq 0$, this means that $ \phi(x^{\psi(k)}) - G(x^{\psi(k)},y^{\psi(k)}) \leq \delta \: \phi(x^{\psi(k)})$, i.e., $\phi(x^{\psi(k)}) \leq \frac{1}{1 - \delta} G(x^{\psi(k)},y^{\psi(k)})$. To also cover the case, $\phi(x^{\psi(k)}) < 0$, we can write $\phi(x^{\psi(k)})^+ \leq \frac{1}{1 - \delta} G(x^{\psi(k)},y^{\psi(k)})^+$.  As $\frac{1}{1-\delta} \geq 0$, we obtain from Eq.~\eqref{eq:diffremark} that $\phi(x^{\psi(k)})^+ \leq \frac{1}{1 - \delta} \Bigl( G(x^{\psi(k)},y^{\psi(k)}) - G(x^{\psi(k)},y^{\psi(k-1)})\Bigr)^+$.   
By continuity of the functions $G$, and $\phi$, and the positive part, we deduce that $\ell = \phi(x)^+ \leq \frac{1}{1 - \delta} \left( G(x,y) - G(x,y)\right)^+ = 0,$    
since $(x^{\psi(k)},y^{\psi(k)})$ and $(x^{\psi(k)},y^{\psi(k-1)})$ both converges towards $(x,y)$. We deduce that $\phi(x^k)^+ \rightarrow 0$ \cite[Ex.~2.5.4]{abbott}. 
\end{appendices}

\end{document}